\numberwithin{equation}{chapter}
\numberwithin{figure}{chapter}
\numberwithin{table}{chapter}
\theoremstyle{plain}% default
\newtheorem{thm}{Theorem}[chapter]
\newtheorem{lem}[thm]{Lemma}
\newtheorem{prop}[thm]{Proposition}
\newtheorem*{cor}{Corollary}
\theoremstyle{definition}
\newtheorem{defn}{Definition}[chapter]
\newtheorem{example}{Example}[chapter]
\theoremstyle{remark}
\begin{document}
\frontmatter
\begin{titlepage}
	\centering
	\huge
	Stability and chaos in real polynomial maps\\
	\bigskip
	\normalsize
	Thesis presented by\\
	\smallskip
	\Large
	Ferm\'in Franco-Medrano, B.Sc.\\
	\medskip
	\normalsize
	As a requirement to obtain the degree of\\
	\smallskip
	\large
	Master of Science with specialty in Applied Mathematics\\
	\medskip
	\normalsize
	conferred by\\
	\large
	Centro de Investigaci\'on en Matem\'aticas, A.C.\\
	\medskip
	\normalsize
	Thesis Advisor:\\
	\large
	\medskip
	Francisco Javier Sol\'is Lozano, Ph.D.\\
	\vfill
	\includegraphics[width=0.2\columnwidth]{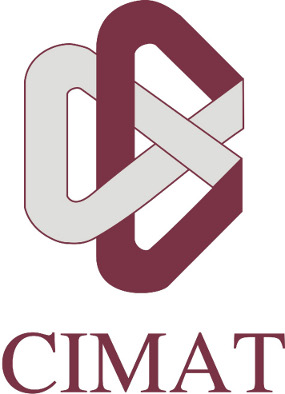}
	\vfill
	\large
	Guanajuato, GT, United Mexican States, June 2013.
\end{titlepage}

\chapter*{}
{\centering
	\huge
	Stability and chaos in real polynomial maps\\
	\bigskip
	\normalsize
	Thesis presented by\\
	\smallskip
	\Large
	Ferm\'in Franco-Medrano, B.Sc.\\
	\medskip
	\normalsize
	As a requirement to obtain the degree of\\
	\smallskip
	\large
	Master of Science with specialty in Applied Mathematics\\
	\medskip
	\normalsize
	conferred by\\
	\large
	Centro de Investigaci\'on en Matem\'aticas, A.C.\\
	\vfill
	\normalsize
	Approved by the Thesis Advisor:\\
	\vfill
	\rule{6.5 cm}{1 pt}\\
	\large
	Francisco Javier Sol\'is Lozano, Ph.D.\\
	\vfill
	\large
	Guanajuato, GT, United Mexican States, June 2013.\\
}
\chapter*{Abstract}
% \label{cha:Problem}

We extend and improve the existing characterization of the dynamics of general quadratic real polynomial maps with coefficients that depend on a single parameter $\lambda$, and generalize this characterization to cubic real polynomial maps, in a consistent theory that is further generalized to real $n$-th degree real polynomial maps. In essence, we give conditions for the stability of the fixed points of any real polynomial map with real fixed points. In order to do this, we have introduced the concept of \emph{Canonical Polynomial Maps} which are topologically conjugate to any polynomial map of the same degree with real fixed points. The stability of the fixed points of canonical polynomial maps has been found to depend solely on a special function termed \emph{Product Distance Function} for a given fixed point. The values of this product distance determine the stability of the fixed point in question, when it bifurcates, and even when chaos arises, as it passes through what we have termed \emph{stability bands}. The exact boundary values of these stability bands are yet to be calculated for regions of type greater than one for polynomials of degree higher than three.
\chapter*{Acknowledgements}

I have both the duty and pleasure of thanking the support of many people and institutions that helped me in the process of conducting my graduate studies. In the first place, I greatly thank the support of the National Council of Science and Technology of the United Mexican States (CONACYT) for their financial support as a national scholar during my studies. I thank my teachers and the personnel of the Center for Mathematical Research (CIMAT) in Guanajuato, for their patient and committed work from which I nurtured and which allowed me to grow in this Mexican institution. In particular, I thank the orientation and guidance of Professor Francisco Javier Sol\'{i}s Lozano, my thesis advisor; as well as Professors L\'{a}zaro Ra\'{u}l Felipe Parada, M\'{o}nica Moreno Rocha and Daniel Olmos Liceaga, who contributed greatly to the improvement of this work with their comments and advice; and Dr. Marcos Aurelio Capistr\'{a}n Ocampo for his support and orientation. I also thank Mr. Jos\'{e} Guadalupe Castro L\'{o}pez and my friends and classmates for their support and invaluable company. To my family and particularly to my mother, I must extend the greatest gratitude of all.
  \tableofcontents
\mainmatter
\chapter{Introduction}
\label{cha:Intro}

In this chapter, we will briefly address the basic concepts of the theory of discrete dynamical systems, so as the concept of chaos that can occur in these systems, as much as necessary to define the relevant problem and the goals of this thesis. Important references in this subject are \cite{DevaneyIntroCDD,Holmgren,Elaydi}, among others, and all of the proofs omitted here can be found therein. This chapter is included with the aim of making the work self-contained, it includes only what is now the standard basic theory of discrete dynamical systems, and may be skipped if the reader is familiar with it.

This chapter has the following objectives: (\emph{i}) familiarize with the fundamentals of the theory of discrete dynamical systems, particularly chaotic ones; (\emph{ii}) show the importance of the parametric dependency in families of functions, particularly bifurcation theory; and (\emph{iii}) present the Feigenbaum sequence and its universality property for unimodal maps as a basis to understand how chaos may arise through period doubling bifurcations.

\section{Elementary concepts}

The main objective of the theory of \emph{discrete dynamical systems} is to understand the \emph{final} or \emph{asymptotic} behavior of an iterative process. If the process is continuous, a differential equation with time being the independent variable, then the theory intends to predict the ultimate behavior of the solutions in the distant future ($t\rightarrow\infty$) or the distant past ($t\rightarrow-\infty$). If the process is discrete, such as the iteration of a real (or complex) function $f$, then the theory hopes to be able to understand the eventual behavior of the points  $x,f(x),f^{2}(x),...,f^{n}(x)$ when $n\in\mathbb{N}$ is large, where $f^{n}(x)$ denotes the $n$-th iteration (i.e., composition of the function with itself). That is, the theory of dynamical systems asks the question: where do points go and what do they do when they get there? In this work, we aim to answer a part of this question for one of the simplest kinds of dynamical systems: functions of a single real variable; in particular, for real polynomial functions, having as a precedent the work of \cite{Solis2004} for quadratic maps.

In the following, let $A\subseteq\mathbb{R}$, $x\in A$ and $f:A\rightarrow\mathbb{R}$.

\begin{defn}[Orbits]
	The \emph{forward orbit} of $x$ is the set of points $x,\, f(x),\, f^{2}(x),\,...$ and is denoted by $O^{+}(x)$. If $f$ is a homeomorphism, we can define the \emph{complete orbit} of $x$, $O(x)$, as the set of points $f^{n}(x)$ for $n\in\mathbb{Z}$, and the \emph{backward orbit} of $x$, $O^{-}(x)$, as the set of points $x,\, f^{-1}(x),\, f^{-2}(x),\,...$.
\end{defn}

Therefore, we can restate the basic goal of discrete dynamical systems as to know and understand all orbits of a map. Orbits can be quite complicated sets, even for simple non-linear maps. However, there are some specially simple orbits which play a central role in the dynamics of the whole system.

\begin{defn}[Periodic Points]
	The point $x$ is a fixed point of $f$ if $f(x)=x$. The point $x$ is a \emph{periodic point of period $n$} if $f^{n}(x)=x$. The smallest positive $n$ for which $f^{n}(x)=x$ is called the \emph{prime period} of $x$. We denote the set of periodic points of period $n$ (not necessarily prime) by $\mathrm{Per}_{n}(f)$, and the set of fixed points by $\mathrm{Fix}(f)$. The set of all iterates of a periodic point forms a \emph{periodic orbit}.
\end{defn}

An important question is the ``range of influence'' of a fixed point. That is to ask, what set of points, if any, will approach or tend to a fixed (or periodic) point under the iteration of $f$.

\begin{defn}[Stable Set or Basin of Attraction]
	Let $p$ be periodic of period $n$. A point $x$ is \emph{forward asymptotic to $p$} if $\lim_{i\rightarrow\infty}f^{in}(x)=p$. The \emph{stable set (or basin of attraction)} of $p$, denoted by $W^{s}(p)$, consists of all points forward asymptotic to $p$.
\end{defn}

If $p$ is not periodic, we can still define its forward asymptotic points requiring that $\vert f^{i}(x)-f^{i}(p)\vert\rightarrow0$ when $i\rightarrow\infty$. Moreover, if $f$ is invertible, we can consider \emph{backward asymptotic points} by taking $i\rightarrow-\infty$ in the last definition. The set of backward asymptotic points to $p$ is called the \emph{unstable set} of $p$ and is denoted by $W^{u}(p)$.

Another useful concept comes from elementary calculus.

\begin{defn}[Critical Points]
	A point $x$ is a \emph{critical point} of $f$ if $f'(x)=0$. The critical point is \emph{non degenerate} if $f''(x)\neq0$. The critical point is \emph{degenerate} if $f''(x)=0$.
\end{defn}

Critical points will prove later to be important in relation to the stable set of a fixed point, and to determine how many attracting orbits a map may have.

Degenerate critical points can be either \emph{maxima}, \emph{minima} or \emph{saddle points}. Non degenerate critical points can only be maxima or minima. A \emph{diffeomorphism} cannot have critical points, but its existence in non-invertible maps is one of the main reasons why this type of maps are more complicated.

The goal of the theory of discrete dynamical systems is then to understand the nature of all orbits and to identify which are periodic, asymptotic, etc. In general, this is an impossible task. For example, if $f(x)$ is a quadratic polynomial, then explicitly finding the periodic points of period $n$ is equivalent to solving the equation $f^{n}(x)=x$, which is a polynomial equation of degree $2^{n}$. Computer calculations do not help in this case either, since rounding errors tend to accumulate and make many periodic points ``invisible'' to the computer. Therefore, qualitative and geometrical techniques must be employed to understand the nature of a system.

\section{Hyperbolicity and criteria for stability}

A key issue once we find periodic points, is to determine the behavior of orbits near these points; such investigation is what is called \emph{stability theory}. Once this is done, we can state whether a periodic point is \emph{attracting} or \emph{repelling} (or neither). To determine this, we can part from the following

\begin{defn}[Stability]
	Let $p\in A$ be a fixed point of $f$. Then,
	\begin{enumerate}
		\item $p$ is said to be \emph{stable} if for any $\varepsilon>0$ there exists a $\delta>0$ such that if $\vert x_0-p\vert<\delta$ then $\vert f^n(x_0)-p\vert<\varepsilon$ for all integers $n\in\mathbb{Z}^+$. Otherwise, the fixed point $p$ is called \emph{unstable}.
		\item $p$ is called \emph{attracting} if there is a $\eta>0$ such that if $\vert x_0-p\vert<\eta$ then $\lim_{n\rightarrow\infty}f^n(x_0)=p$.
		\item $p$ is \emph{asymptotically stable} if it is both stable and attracting.
	\end{enumerate}
\end{defn}

Therefore, intuitively, a fixed point is stable if nearby orbits remain near the fixed point, and you can always find an interval around the fixed point such that orbits starting within the interval will remain arbitrarily close to the fixed point. The opposite is that you cannot do that; i.e. there is an interval around the fixed point such that no matter how close to the fixed point you start the orbit from, the orbit will go outside the interval. The case of an asymptotically fixed point is particularly important, since in this case all the orbits near a fixed point will approach it in the limit. The next step is to be able to determine when a fixed point is stable or unstable. To do this, we will need the

\begin{defn}[Hyperbolic Point]
	\label{def:hyperbolic-fp}
	Let $p$ be a periodic point of period $n$. The point $p$ is \emph{hyperbolic} if $\vert(f^{n})'(p)\vert\neq1$. Otherwise, it is called a \emph{nonhyperbolic} fixed point. The number $(f^{n})'(p)$ is called the \emph{multiplier} of the periodic point.
\end{defn}

\begin{example}
	Consider the diffeomorphism $f(x)=\frac{1}{2}(x^{3}+x)$. There are three fixed points: $x=0,\,1,$ and -1. Note that $f'(0)=1/2$ and $f'(\pm1)=2$. Therefore, each point is hyperbolic.
\end{example}

As it turns out, hyperbolic points are easy to understand in terms of whether they are attracting or \emph{repelling}, as they are always one of the two.

\begin{defn}[Attractor and repellor]
	\label{def:attractor-repellor}
	Let $p$ be a hyperbolic point of period $n$. If $\vert(f^{n})'(p)\vert<1$, the point $p$ is asymptotically stable and is called an attracting periodic point or more shortly, an \emph{attractor}; if $\vert(f^{n})'(p)\vert>1$, $p$ is unstable and it is called a repelling periodic point or, simply, a \emph{repellor}.
\end{defn}

Occasionally, the terms sink and source may also be used to refer to attractors and repellors, respectively, stemming from the continuous dynamical systems analog. Now, if a periodic point is attracting, it must have an interval around it in which it is so; this is stated in the following

\begin{thm}\label{thm:attractor-repellor}
	Let $p$ be a hyperbolic fixed point of $f$. Then if $p$ is an attractor, there exists an open set $U\subseteq W^{s}(p)$, with $p\in U$; on the other hand, if $p$ is a repellor, there is an open set $V\subset W^{u}(p)$, with $p\in V$.
\end{thm}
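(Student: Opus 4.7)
The plan is to handle the two cases symmetrically, reducing the repellor case to the attractor case via a local inverse. The key tool in both cases is the Mean Value Theorem, which turns a derivative bound at $p$ into a uniform contraction (or expansion) factor on a small neighborhood.

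For the attractor case, I would begin from the hypothesis $|f'(p)| < 1$ and use the continuity of $f'$ to pick some $K$ with $|f'(p)| < K < 1$ and an $\eta > 0$ such that $|f'(x)| \leq K$ for every $x$ in the open interval $U := (p-\eta, p+\eta)$. The MVT, together with $f(p)=p$, then yields
\begin{equation*}
	|f(x)-p| = |f(x)-f(p)| \leq K\,|x-p| < |x-p| < \eta
\end{equation*}
for all $x \in U$, so $f(U) \subseteq U$ and an easy induction gives $|f^{n}(x)-p| \leq K^{n}|x-p|$. Since $K<1$, this tends to zero, so every point of $U$ is forward asymptotic to $p$, i.e.\ $U \subseteq W^{s}(p)$.

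For the repellor case, the hypothesis $|f'(p)| > 1$ in particular gives $f'(p) \neq 0$, so the Inverse Function Theorem provides an open neighborhood $W$ of $p$ on which $f$ is a diffeomorphism onto its image; let $g$ denote the local inverse, which also fixes $p$. From $(g \circ f)(x) = x$ and the chain rule we get $g'(p) = 1/f'(p)$, hence $|g'(p)| < 1$. Applying the attractor argument already established to the map $g$ yields an open neighborhood $V$ of $p$, contained in $W$, such that $g^{n}(x) \to p$ for every $x \in V$; since $g^{n} = f^{-n}$ on $V$, this exactly says $V \subset W^{u}(p)$.

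The only genuine subtlety I foresee is the repellor case: one must ensure that the iterates $g^{n}$ are all well-defined on $V$, which requires shrinking $V$ so that $g$ maps $V$ into the domain $W$ where the inverse exists, and this is precisely what the attractor half of the proof delivers (namely $g(V) \subseteq V \subseteq W$). Everything else is a direct estimate; no deep machinery beyond $C^1$-regularity of $f$, the MVT, and the Inverse Function Theorem is needed.
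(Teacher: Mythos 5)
Your proof is correct, and it is essentially the standard argument: the paper itself omits the proof of this theorem (deferring to the cited references \cite{DevaneyIntroCDD,Elaydi}), where the attractor case is proved exactly by your Mean Value Theorem contraction estimate and the repellor case by passing to the local inverse. The one subtlety you flag --- that $g^{n}$ must stay defined, which is guaranteed by $g(V)\subseteq V$ from the contraction estimate --- is handled correctly, and this is also the right way to interpret $W^{u}(p)$ for a possibly non-invertible $f$, namely via the branch of backward orbits given by the local inverse.
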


This theorem explains the choice of the terms ``attracting'' an ``repelling'', since attractors and repellors have around them a basin of attraction and an unstable set, respectively.

It is straightforward to see that $W^s(p)$ is an invariant set under the action of the map. A similar result is true for periodic points of period $n$. In this case, we have an open interval around $p$ that is mapped into itself by $f^{n}$.

\section{The Schwarzian derivative}
\label{sec:SchwarzianDerivative}

Here, we will discuss a mathematical tool that will turn out useful in determining some properties of maps in terms of its periodic points structure.

\begin{defn}[Schwarzian Derivative]
	The \emph{Schwarzian derivative} of a function $f$ at the point $x$ is given by
	\[Sf(x)=\frac{f'''(x)}{f'(x)}-\frac{3}{2}\left(\frac{f''(x)}{f'(x)}\right)^{2}.\]
\end{defn}

In particular, it was first used by \cite{Singer} in \cite{Singer} to address the question of how many attracting periodic points a differentiable map can have \cite{Singer} (see \emph{Singer's Theorem} below). It can also be used to determine the nature of non-hyperbolic periodic points \cite{Elaydi}. Mappings with negative Schwarzian derivative present particular dynamical properties that will interest us in this work.

\begin{thm}
	Let $P(x)$ be a real polynomial. If all roots of $P'(x)$ are real and distinct, then $SP(x)<0$ for all $x$.
\end{thm}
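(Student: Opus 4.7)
The natural approach is logarithmic differentiation applied to $P'$, writing it as a product over its real, distinct roots and then expressing the Schwarzian in terms of that factorization.

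First I would assume $\deg P = n \ge 2$ (otherwise $P'$ has no roots and the hypothesis forces no real constraint; if $\deg P = 1$ then $SP \equiv 0$ trivially, and if $\deg P = 0$ then $SP$ is not defined). By hypothesis, $P'(x) = a\prod_{i=1}^{n-1}(x - r_i)$ with $a \neq 0$ and the $r_i$ real and pairwise distinct. Taking the logarithmic derivative on any open interval where $P'$ does not vanish yields
\[
\frac{P''(x)}{P'(x)} \;=\; \sum_{i=1}^{n-1}\frac{1}{x - r_i}.
\]
Differentiating once more, the left-hand side becomes $\frac{P'''(x)P'(x) - (P''(x))^2}{(P'(x))^2} = \frac{P'''(x)}{P'(x)} - \left(\frac{P''(x)}{P'(x)}\right)^2$, while the right-hand side becomes $-\sum_{i=1}^{n-1}\frac{1}{(x-r_i)^2}$.

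Substituting into the definition of the Schwarzian, I then get the clean identity
\[
SP(x) \;=\; \frac{P'''(x)}{P'(x)} - \left(\frac{P''(x)}{P'(x)}\right)^2 \;-\; \frac{1}{2}\left(\frac{P''(x)}{P'(x)}\right)^2
\;=\; -\sum_{i=1}^{n-1}\frac{1}{(x-r_i)^2} \;-\; \frac{1}{2}\left(\frac{P''(x)}{P'(x)}\right)^2.
\]
Both terms on the right are manifestly non-positive. Moreover, whenever $SP(x)$ is defined, $x$ differs from every $r_i$, so the first sum is strictly negative (the hypothesis that the roots are distinct is not needed for this step, but it guarantees the logarithmic derivative formula has only simple poles, which is what allowed the clean computation above).

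The only real subtlety — and the part I would flag as the main thing to get right — is the domain: the identity is derived on intervals where $P'(x) \ne 0$, and $SP$ is only defined there anyway, so the conclusion $SP(x) < 0$ holds wherever the Schwarzian makes sense. There is no genuine obstacle beyond this bookkeeping; the distinctness of the roots ensures the partial-fraction expansion of $P''/P'$ has no repeated poles, which makes the derivation of $\frac{d}{dx}(P''/P') = -\sum_i (x-r_i)^{-2}$ immediate rather than requiring the more general Mittag–Leffler-type manipulation.
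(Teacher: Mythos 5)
Your proof is correct, and it is the standard argument: factor $P'$ over its real roots, take the logarithmic derivative to get $P''/P' = \sum_i (x-r_i)^{-1}$, differentiate once more, and observe that $SP(x) = -\sum_i (x-r_i)^{-2} - \tfrac{1}{2}\left(P''(x)/P'(x)\right)^2$ is a sum of non-positive terms, the first strictly negative wherever $SP$ is defined. The paper itself states this theorem without proof (it defers all omitted proofs in that chapter to the cited references), so there is no in-text argument to compare against; your derivation is exactly what one would expect to find there. Your side remarks are also accurate and worth keeping: the identity only requires the roots of $P'$ to be real (multiplicities would just weight the sum $\sum_i m_i(x-r_i)^{-2}$, still positive), and the statement implicitly assumes $\deg P \geq 2$, since for linear $P$ the hypothesis is vacuous while $SP \equiv 0$.
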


The following theorem is useful when analyzing \emph{topologically conjugate maps} (a concept to be defined further below in \ref{sec:Conjugacy}), which in turn is useful to determine the stability properties of one map in terms of the known such properties of another map (the \emph{conjugate map}).

\begin{thm}
	Suppose $Sf<0$ and $Sg<0$. Then $S(f\circ g)<0$.
\end{thm}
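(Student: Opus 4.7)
The plan is to establish the Schwarzian chain rule,
\[ S(f\circ g)(x) \;=\; Sf(g(x))\cdot [g'(x)]^{2} \;+\; Sg(x), \]
and then read off the conclusion in one line. Granted this identity, at every point $x$ where the Schwarzians are defined (so $g'(x)\neq 0$ and $f'(g(x))\neq 0$), the first term on the right is non-positive because $Sf<0$ and $[g'(x)]^{2}\geq 0$, while the second term is strictly negative by hypothesis; the sum is therefore strictly negative, which is precisely $S(f\circ g)(x)<0$. Note that a similar argument works even if $g'$ vanishes at isolated points, since there $S(f\circ g)$ is simply not defined, so the statement is vacuous on that set.

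To derive the identity, I would set $h=f\circ g$ and grind out $h'$, $h''$, $h'''$ by the chain and product rules. Two convenient intermediate quantities are $h''/h'$ and $h'''/h'$: dividing by $h'(x)=f'(g(x))\,g'(x)$ produces the natural splitting into a $g$-only piece (namely $g''/g'$ or $g'''/g'$) and a piece in which derivatives of $f$ at $g(x)$ appear, multiplied by the appropriate powers of $g'(x)$. Assembling $h'''/h' - \tfrac{3}{2}(h''/h')^{2}$ and grouping by order of differentiation of $f$, the top-order terms in $f$ yield $[g'(x)]^{2}\,Sf(g(x))$, the pure-$g$ terms reassemble into $Sg(x)$, and the cross terms proportional to $f''(g(x))\,g''(x)$ cancel exactly between $h'''/h'$ and $\tfrac{3}{2}(h''/h')^{2}$. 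That cancellation is the whole point of the $3/2$ coefficient in the definition of $Sf$.

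The main obstacle is strictly bookkeeping: one must keep track of several mixed chain- and product-rule terms and verify the cancellation of the $f''(g)\,g''$ cross terms. There is no conceptual difficulty beyond this algebraic reshuffle, and nothing from the preceding theory (hyperbolicity, basins, conjugacy) is needed. Once the identity is in hand, the proof is the one-line sign argument of the first paragraph.
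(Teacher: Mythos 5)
Your proof is correct and follows the same route as the standard argument this paper defers to its references for: establish the cocycle identity $S(f\circ g)(x)=Sf(g(x))\,[g'(x)]^{2}+Sg(x)$ and conclude by inspecting signs. The only cosmetic remark is that at points where $S(f\circ g)$ is defined one has $g'(x)\neq 0$, so $[g'(x)]^{2}>0$ and the first term is in fact strictly negative, not merely non-positive; either way the conclusion stands.
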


Also, this tells us that iteration of a map does not change the sign of its Schwarzian derivative, i.e. the property is preserved. This immediately takes us to

\begin{cor}
	Let $Sf<0$. Then $Sf^{n}<0$ for all $n>1$.
\end{cor}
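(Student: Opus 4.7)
The plan is to derive this corollary directly from the preceding theorem by a straightforward induction on $n$, since the theorem asserts exactly the closure property needed: the set of functions with negative Schwarzian derivative is closed under composition.

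First I would set up the induction on $n \geq 2$. For the base case $n = 2$, I would apply the preceding theorem with $g = f$: since $Sf < 0$ by hypothesis, we have both $Sf < 0$ and $Sg < 0$, so $S(f \circ f) = Sf^{2} < 0$. This establishes the base case with essentially no work.

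For the inductive step, I would assume that $Sf^{n} < 0$ for some $n \geq 2$ and show that $Sf^{n+1} < 0$. Writing $f^{n+1} = f \circ f^{n}$, the composition theorem applies with the two inputs being $f$ (for which $Sf < 0$ by hypothesis) and $f^{n}$ (for which $Sf^{n} < 0$ by the inductive hypothesis). The conclusion $S(f \circ f^{n}) < 0$ is exactly $Sf^{n+1} < 0$, closing the induction.

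There is no real obstacle here — the corollary is essentially a restatement of the composition theorem iterated $n-1$ times. The only subtlety worth flagging is a bookkeeping one: one must be careful to write $f^{n+1}$ as $f \circ f^{n}$ (rather than $f^{n} \circ f$) to match the hypotheses of the cited theorem, though since composition is associative and the theorem is symmetric in its two inputs, either decomposition works. No calculation of Schwarzian derivatives is required; the entire argument is structural.
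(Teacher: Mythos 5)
Your induction is correct and is exactly the argument the paper intends: the corollary is stated as an immediate consequence of the composition theorem, with no further proof given, and iterating that theorem via induction on $n$ is the standard (and only natural) way to make "immediate" precise. Nothing is missing.
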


The assumption of $Sf<0$ has several surprising implications for the dynamics of a one-dimensional map, as we will see below. Another useful result dealing with the geometry of a map is

\begin{lem}
	\label{lem:maxmin}
	If $Sf<0$, then $f'(x)$ cannot have a positive local minimum or a negative local maximum.
\end{lem}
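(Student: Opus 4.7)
The plan is to argue by contradiction, exploiting the fact that at any critical point of $f'$, the Schwarzian formula collapses to a single term whose sign we can read off directly.

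Suppose, for contradiction, that $f'$ attains a positive local minimum at some point $x_0$; the negative local maximum case is entirely symmetric, so I would handle it in the same breath. Since $x_0$ is a critical point of $f'$, we have $(f')'(x_0) = f''(x_0) = 0$. Substituting into the definition of the Schwarzian gives
\[
Sf(x_0) \;=\; \frac{f'''(x_0)}{f'(x_0)} \;-\; \frac{3}{2}\left(\frac{0}{f'(x_0)}\right)^{2} \;=\; \frac{f'''(x_0)}{f'(x_0)}.
\]
(Note that $f'(x_0) \neq 0$ by hypothesis, since we assumed $f'(x_0) > 0$, so the quotient makes sense.)

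Now I would extract sign information from the local extremum condition. Since $x_0$ is a local minimum of $f'$ and $f''(x_0) = 0$, the standard second-derivative test applied to $f'$ forces $(f')''(x_0) = f'''(x_0) \geq 0$: otherwise $f'''(x_0) < 0$ would make $x_0$ a strict local maximum of $f'$. Combined with $f'(x_0) > 0$, this gives $Sf(x_0) = f'''(x_0)/f'(x_0) \geq 0$, directly contradicting the standing hypothesis $Sf < 0$. The symmetric argument for a negative local maximum yields $f'''(x_0) \leq 0$ and $f'(x_0) < 0$, so the quotient is again nonnegative.

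The only delicate point, and really the main thing worth being careful about, is the degenerate subcase $f'''(x_0) = 0$: here the second-derivative test is inconclusive for classifying $x_0$ as a min or max of $f'$, but luckily we do not need that classification — we only need the inequality $f'''(x_0) \geq 0$ (resp.\ $\leq 0$), which holds as a necessary condition for any local minimum (resp.\ maximum), and this already gives $Sf(x_0) = 0$, still in contradiction with the strict inequality $Sf(x_0) < 0$. So the argument goes through uniformly without having to separate nondegenerate and degenerate extrema.
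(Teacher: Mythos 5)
Your argument is correct and is precisely the ``straightforward from the definition of the Schwarzian derivative and elementary calculus'' proof that the paper alludes to without writing out: at an interior extremum of $f'$ one has $f''(x_0)=0$, so $Sf(x_0)=f'''(x_0)/f'(x_0)$, and the sign constraints from the second-derivative necessary condition together with the sign of $f'(x_0)$ force $Sf(x_0)\geq 0$, contradicting $Sf<0$. Your handling of the degenerate case $f'''(x_0)=0$ is a nice touch and the proof is complete.
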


The proof of this last lemma is straightforward from the definition of the Schwarzian derivative and elementary calculus. Note that this lemma tells us that, if $Sf<0$, between any two consecutive critical points of $f'$, its graph must cross the $x$-axis; this in turn means that there must be a critical point for $f$ between these two points, i.e. there must be a maximum or minimum of $f$ between any pair of its inflection points.

The Schwarzian derivative can also be used to determine whether a fixed point attracts a critical point of a map.

\begin{lem}
	Let $a_1, a_2$ and $a_3$ be fixed points of a continuously differentiable map $g$ with $a_1<a_2<a_3$ and such that $Sg<0$ on the open interval $(a_1,\,a_3)$. If $g'(a_2)\leq 1$, then $g$ has a critical point in $(a_1,\,a_3)$.
\end{lem}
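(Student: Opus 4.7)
The plan is to apply Rolle's theorem twice and then squeeze a local extremum of $g'$ out of Lemma~\ref{lem:maxmin}. Since $a_1,\,a_2,\,a_3$ are fixed points, the auxiliary function $h(x):=g(x)-x$ vanishes at each, so Rolle furnishes $c_1\in(a_1,\,a_2)$ and $c_2\in(a_2,\,a_3)$ with $h'(c_i)=0$, i.e.\ $g'(c_1)=g'(c_2)=1$. I will then examine $g'$ on $[c_1,\,c_2]$: it equals $1$ at both endpoints, while at the interior point $a_2$ it satisfies $g'(a_2)\le 1$ by hypothesis. The resulting oscillation of $g'$, together with the Schwarzian hypothesis, should force $g'$ to vanish somewhere in $(a_1,\,a_3)$.

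I would argue by contradiction: assume $g$ has no critical point in $(a_1,\,a_3)$, so that $g'\ne 0$ on this interval and, by continuity together with $g'(c_1)=1>0$, in fact $g'>0$ throughout. Then $Sg$ is defined and negative everywhere on $(a_1,\,a_3)$, so Lemma~\ref{lem:maxmin} applies and forbids any positive local minimum of $g'$. In the generic case $g'(a_2)<1$, the continuous function $g'$ attains its minimum over $[c_1,\,c_2]$ at some point $z$; since $g'(a_2)<1=g'(c_1)=g'(c_2)$, this minimum lies strictly below $1$ and must therefore be realized at an interior point, giving a local minimum $z\in(c_1,\,c_2)$ with $g'(z)>0$ that contradicts the lemma. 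This branch is easy.

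The degenerate case $g'(a_2)=1$ is where the main obstacle lies, and I would handle it in two layers. First, $g'$ cannot be locally constant on any subinterval containing $a_2$, for otherwise $g''=g'''=0$ there and $Sg=0$, contradicting $Sg<0$. On each of the halves $[c_1,\,a_2]$ and $[a_2,\,c_2]$ the non-constant function $g'$ takes the value $1$ at both endpoints, so either (a) it dips strictly below $1$ somewhere in the interior, producing an interior positive local minimum $<1$ that is again forbidden by Lemma~\ref{lem:maxmin}, or (b) it remains $\ge 1$ throughout that half. If both halves fall into alternative (b), then $a_2$ itself is a (non-strict) positive local minimum of $g'$ on $[c_1,\,c_2]$, once more contradicting the lemma. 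In every branch the standing no-critical-point assumption collapses, so $g$ must possess a critical point in $(a_1,\,a_3)$.
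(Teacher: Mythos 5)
Your argument is correct. The paper itself omits the proof of this lemma (deferring to the cited references), but what you give is the standard argument: the mean value theorem applied to $g(x)-x$ at the three fixed points produces $c_1\in(a_1,a_2)$ and $c_2\in(a_2,a_3)$ with $g'(c_1)=g'(c_2)=1$, and then, assuming $g'>0$ on $(a_1,a_3)$, the condition $g'(a_2)\le 1$ forces $g'$ to attain an interior positive local minimum on $[c_1,c_2]$, contradicting Lemma~\ref{lem:maxmin}. Your separate treatment of the boundary case $g'(a_2)=1$ (ruling out local constancy via $Sg<0$ and then observing that $a_2$ itself becomes a non-strict positive local minimum if $g'$ never dips below $1$) is a legitimate and careful way to close the case that most textbook presentations pass over silently; just note that Lemma~\ref{lem:maxmin} does apply to non-strict local minima, since an interior local minimum $z$ of $g'$ still gives $g''(z)=0$ and $g'''(z)\ge 0$, hence $Sg(z)\ge 0$.
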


The last lemma is used in the proof of the following important result due to \cite{Singer} (\cite{Singer}).

\begin{thm}[Singer's Theorem]
	\label{thm:Singer}
	Suppose $Sf<0$ ($Sf(x)=-\infty$ is allowed). Suppose $f$ has $n$ critical points. Then $f$ has at most $n+2$ attracting periodic orbits.
\end{thm}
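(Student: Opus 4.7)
The plan is to show that every attracting periodic orbit of $f$ \emph{consumes} at least one critical point of $f$, with at most two exceptional orbits that may instead be anchored to an endpoint of the domain, so the total count of attracting orbits is bounded by $n+2$. Let $p$ be an attracting periodic point of prime period $k$, so that $|(f^{k})'(p)|\leq 1$ and, by the corollary on iterated Schwarzian derivatives, $Sf^{k}<0$. The strategy is to extract a critical point of $f^{k}$ from the immediate basin of attraction of $p$ by invoking the immediately preceding three-fixed-point lemma, and then to pull this back through the chain rule to obtain a critical point of $f$ that is uniquely associated with the orbit of $p$.

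To carry this out, I would consider the connected component $I$ of the basin $W^{s}(p)$ of $p$ under $f^{k}$ that contains $p$. In the generic case $I=(a_{1},a_{3})$ is a bounded open interval whose endpoints lie in the domain and are fixed by $f^{k}$: by maximality of the connected component, $f^{k}(a_{i})$ cannot lie in the interior of $I$ nor escape outside $\overline{I}$, so $f^{k}$ must permute $\{a_{1},a_{3}\}$. Setting $a_{2}=p$ gives $a_{1}<a_{2}<a_{3}$ with $(f^{k})'(a_{2})\leq 1$, and since $Sf^{k}<0$ on $(a_{1},a_{3})$, the preceding lemma furnishes a critical point $c$ of $f^{k}$ inside $I$. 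By the chain rule
\[
(f^{k})'(c)\;=\;\prod_{j=0}^{k-1} f'\!\bigl(f^{j}(c)\bigr),
\]
some iterate $c_{0}=f^{j}(c)$, with $0\leq j<k$, must be a critical point of $f$. Since $c\in I$, the forward orbit of $c_{0}$ under $f$ is attracted to the periodic orbit of $p$. Distinct attracting orbits have disjoint basins, so the map ``attracting orbit $\mapsto$ critical point of $f$ attracted to it'' is injective into the set of $n$ critical points of $f$.

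The ``$+2$'' absorbs the degenerate situations in which $I$ is not flanked by a pair of interior boundary points of the domain: this occurs when $I$ abuts an endpoint of the domain (for instance, extending to $\pm\infty$ if $f$ is defined on $\mathbb{R}$, or to $a$ or $b$ if the domain is $[a,b]$), in which case one of the $a_{i}$ is unavailable for the three-fixed-point lemma. At most two attracting orbits can be endpoint-anchored in this way, one per endpoint, which accounts for the additive constant $2$.

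The main obstacle is the case analysis for the boundary behavior of $I$. I must argue rigorously that the endpoints $a_{1},a_{3}$ can be treated as genuine fixed points of $f^{k}$ rather than as a period-$2$ cycle of $f^{k}$ that swaps them; in the swapped case one replaces $f^{k}$ by $f^{2k}$, noting that $Sf^{2k}<0$ and that the chain-rule pullback still ends at a critical point of $f$. One also needs to verify that the preceding lemma produces a critical point of $f^{k}$ interior to $I$ (not on $\partial I$), and that the assignment ``orbit $\mapsto$ critical point'' is well-defined independently of which point of the periodic orbit is chosen as $p$. Once these bookkeeping issues are settled, the bound $n+2$ follows.
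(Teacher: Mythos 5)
The paper does not actually reproduce a proof of Singer's theorem --- it only remarks that the preceding three-fixed-point lemma is the key ingredient and defers to Singer/Elaydi --- and your argument is precisely that standard proof: extract a critical point of $f^{k}$ from each immediate basin component via that lemma together with the corollary $Sf^{k}<0$, pull it back to a critical point of $f$ by the chain rule, use disjointness of basins for injectivity, and let the $+2$ absorb the at most two orbits whose immediate basins abut the endpoints of the domain. The one boundary case your sketch omits is when $f^{k}$ does not permute the endpoints $\{a_{1},a_{3}\}$ of the basin component but collapses them ($f^{k}(a_{1})=f^{k}(a_{3})$), where Rolle's theorem supplies the interior critical point of $f^{k}$ directly; with that noted, the outline is sound and matches the argument the paper points to.
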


This last theorem allows to find directly from $f$ an upper bound for the number of attracting periodic points. This differs from the information provided in the analysis by \emph{Sarkovskii's theorem} (see section \ref{sec:Sarkovskii} below) in that the latter tells us how many periodic points there are, regardless of whether they are attracting or not and, also, we have first to find some periodic point and its period to be able to apply Sarkovskii's theorem.

\section{Nonhyperbolic fixed points}
\label{sec:nonhyperbolic}

The stability criteria for nonhyperbolic fixed points involve the Schwarzian derivative defined in \ref{sec:SchwarzianDerivative}. We will analyze the cases of the multiplier being equal to 1 and -1 separately. The unstated proofs of the following theorems can be found in the book by \cite{Elaydi}.

\begin{thm}
	\label{thm:nonhyperbolic-pos}
	Let $p$ be a fixed point of a map $f$ such that $f'(p)=1$. Then, if $f'''(p)\neq 0$ and is continuous, the following statements hold
	\begin{enumerate}
		\item If $f''(p)\neq 0$, then $p$ is unstable.
		\item If $f''(p)=0$ and $f'''(p)>0$, then $p$ is unstable.
		\item If $f''(p)=0$ and $f'''(p)<0$, then $p$ is asymptotically stable.
	\end{enumerate}
\end{thm}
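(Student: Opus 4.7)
The plan is to reduce all three cases to an analysis of the sign of $f(x)-x$ in a small punctured neighborhood of $p$, obtained from a third-order Taylor expansion, and then translate that sign information into monotone-orbit behavior. Writing $u=x-p$ and using $f'(p)=1$ together with the hypothesis that $f'''$ is continuous at $p$, I would expand
\[
  f(x)-x \;=\; \tfrac{1}{2}f''(p)\,u^{2} \;+\; \tfrac{1}{6}f'''(p)\,u^{3} \;+\; o(u^{3}),
\]
so that on a sufficiently small interval around $p$ the sign of $f(x)-x$ is controlled by its leading nonzero polynomial term. From the same expansion one also obtains $f(x)-p = u\bigl(1+O(u)\bigr)$, which will be used to argue that $f$ preserves the side of $p$ for $|u|$ small enough.

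For Case 1, with $f''(p)\neq 0$, the quadratic term dominates and $f(x)-x$ takes the sign of $f''(p)$ on the whole punctured neighborhood. Thus on one of the two sides of $p$ the iterates $\{f^{n}(x_{0})\}$ form a strictly monotone sequence pointing away from $p$; since the fixed-point equation $f(x)=x$ has no other solution than $u=0$ on this interval (factor out $u^{2}$ to see this), the orbit cannot accumulate inside the interval and must eventually leave any preassigned $\varepsilon$-neighborhood. For Case 2 the vanishing of $f''(p)$ kills the quadratic term, so $\tfrac{1}{6}f'''(p)u^{3}$ governs the sign of $f(x)-x$; with $f'''(p)>0$ this sign agrees with that of $u$, and orbits on \emph{both} sides of $p$ are now pushed monotonically outward, so the same escape argument delivers instability.

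For Case 3, the same cubic term has sign opposite to $u$, hence $|f(x)-p|<|x-p|$, and the side-preservation observation above shows $f$ maps a small interval around $p$ into itself without crossing $p$. The iterates therefore form a monotone bounded sequence; its limit $L$ satisfies $f(L)=L$, and since $p$ is the unique fixed point locally we conclude $L=p$. Combined with the inequality $|f^{n}(x_{0})-p|\le|x_{0}-p|$ valid for all $n$, this yields both stability and attraction, i.e.\ asymptotic stability.

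The main technical obstacle is not the sign analysis but the uniform choice of neighborhood. One must shrink the interval around $p$ until the Peano remainder $o(u^{3})$ is strictly smaller in absolute value than the relevant leading polynomial term throughout the interval, so that no sign is ever flipped; and in Case~3 one must furthermore ensure $f$ maps the chosen interval into itself in a side-preserving manner, so that the monotone-convergence argument applies to every iterate, not merely the first. Both reductions are routine $\varepsilon$--$\delta$ extractions from the continuity of $f''$ and $f'''$, but they are precisely what upgrades the infinitesimal Taylor picture into a rigorous statement about the full orbit.
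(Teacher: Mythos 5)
The paper does not supply its own proof of this theorem: it states explicitly that the proofs in this section are deferred to Elaydi's book. Your argument is correct and is essentially the standard one used there — a third-order Taylor expansion of $f(x)-x$ at $p$, a sign analysis of the leading nonvanishing term to decide whether orbits are pushed toward or away from $p$ on each side, and a monotone-sequence/unique-local-fixed-point argument to convert that into escape (instability) or convergence (asymptotic stability). The side-preservation estimate $f(x)-p=u\bigl(1+O(u)\bigr)$ and the uniform shrinking of the neighborhood so the remainder cannot flip the sign are exactly the right technical points to flag, and they go through under the stated continuity hypothesis on $f'''$; no gap.
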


Similarly, we have

\begin{thm}
	\label{thm:nonhyperbolic-neg}
	Let $p$ be fixed point of a map $f$ such that $f'(p)=-1$. If $f'''(p)$ is continuous, then the following statements hold:
	\begin{enumerate}
		\item If $Sf(p)<0$, then $p$ is an asymptotically stable.
		\item If $Sf(p)>0$, then $p$ is unstable.
	\end{enumerate}
\end{thm}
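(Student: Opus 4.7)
The natural strategy is to reduce this case (multiplier $-1$) to the case of multiplier $+1$, which has already been handled in Theorem \ref{thm:nonhyperbolic-pos}. Set $g = f \circ f = f^{2}$, and note that $p$ is still a fixed point of $g$. By the chain rule
\[
g'(p) = f'(f(p)) f'(p) = (f'(p))^{2} = 1,
\]
so $g$ has a fixed point at $p$ with multiplier $1$, and Theorem \ref{thm:nonhyperbolic-pos} applies to $g$ provided we can compute $g''(p)$ and $g'''(p)$.

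Differentiating again and evaluating at $p$ with $f'(p) = -1$, a direct computation gives
\[
g''(p) = f''(p)(f'(p))^{2} + f'(p)f''(p) = f''(p) - f''(p) = 0,
\]
which rules out case (1) of Theorem \ref{thm:nonhyperbolic-pos} automatically. One more differentiation produces
\[
g'''(p) = f'''(p)(f'(p))^{3} + 3 f''(p) f'(p) f''(p) + f'(p) f'''(p) = -2 f'''(p) - 3 (f''(p))^{2}.
\]
On the other hand, since $f'(p)=-1$,
\[
Sf(p) = \frac{f'''(p)}{f'(p)} - \frac{3}{2}\left(\frac{f''(p)}{f'(p)}\right)^{2} = -f'''(p) - \frac{3}{2}(f''(p))^{2},
\]
so the key identity is $g'''(p) = 2\, Sf(p)$.

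With this identity in hand, the theorem falls out of Theorem \ref{thm:nonhyperbolic-pos}: if $Sf(p)<0$ then $g'''(p)<0$, placing us in case (3) of that theorem, so $p$ is asymptotically stable under $g = f^{2}$; if $Sf(p)>0$ then $g'''(p)>0$, placing us in case (2), so $p$ is unstable under $g$. The final step is to transfer the conclusion from $f^{2}$ back to $f$. Instability transfers immediately, since orbits of $f$ under the even iterates are exactly orbits of $g$. Asymptotic stability requires a short argument using the continuity of $f$: if $f^{2n}(x_{0}) \to p$ on some neighborhood of $p$, then $f^{2n+1}(x_{0}) = f(f^{2n}(x_{0})) \to f(p) = p$, and stability of $g$ combined with continuity of $f$ gives the $\varepsilon$-$\delta$ property for $f$ by choosing the neighborhood small enough so that its image under $f$ also lies in the stability neighborhood for $g$.

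The main computational obstacle is the clean derivation of the identity $g'''(p) = 2\, Sf(p)$, which is what makes the Schwarzian derivative appear naturally in the criterion; the transfer of stability from $f^{2}$ to $f$ is standard and requires only continuity.
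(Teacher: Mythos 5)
Your proposal is correct: the reduction to Theorem \ref{thm:nonhyperbolic-pos} via $g=f^{2}$, the computations $g'(p)=1$, $g''(p)=0$, $g'''(p)=-2f'''(p)-3(f''(p))^{2}=2\,Sf(p)$, and the transfer of (in)stability from $f^{2}$ back to $f$ are all sound — indeed the identity $\tfrac{\partial^{3}}{\partial x^{3}}f_{\lambda}^{2}(x_{0})=2\,Sf_{\lambda}(x_{0})$ reappears verbatim in the paper's own period-doubling computation. The paper itself omits the proof and defers to Elaydi, where essentially this same argument is given, so your route matches the intended one.
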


It is worth noting that a repellor is an unstable periodic point but not all such points are repellors since, a nonhyperbolic periodic point may also be unstable; e.g. a ``semistable'' nonhyperbolic fixed point is ``attracting from the right (or left)'' but ``repelling from the left (or right, respectively)''.

When dealing with nonhyperbolic points, particularly when they are unstable, it is useful to account for the concept of ``semistability'', separating stability ``from the right'' and ``from the left''.

\begin{defn} [Semistability]
	\label{def:semistability}
	A fixed point $p$ of a map $f$ is said to be semistable from the right (respectively, from the left) if for any $\varepsilon>0$ there exists $\delta>0$ such that if $0<x_0-p<\delta$ (respectively, $0<p-x_0<\delta$) then $\vert f^n(x_0)-p\vert<\varepsilon$ for all $n\in\mathbb{Z}^+$. Moreover, if $lim_{n\rightarrow\infty}f^n(x_0)=p$ whenever $0<x_0-p<\eta$ (respectively, $0<p-x_0<\eta$) for some $\eta>0$, then $p$ is said to be semiasymptotically stable from the right (respectively, from the left).
\end{defn}

It is then easy to prove that

\begin{thm} [Asymptotical semistability]
	\label{thm:semistability}
	Let $f$ be a map, $p$ one of its fixed points and with $f'(p)=1$ and $f''(p)\neq0$. Then $p$ is
	\begin{enumerate}
		\item Semiasymptotically stable from the right if $f''(p)<0$;
		\item Semiasymptotically stable from the left if $f''(p)>0$.
	\end{enumerate}
\end{thm}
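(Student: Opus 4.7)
The plan is to reduce everything to the local quadratic model supplied by Taylor's theorem. Since $p$ is a fixed point with $f'(p)=1$ and $f''(p)\neq 0$, we can write, for $x$ near $p$,
\[
f(x) = x + \tfrac{1}{2}f''(p)\,(x-p)^{2} + \varepsilon(x)(x-p)^{2},\qquad \varepsilon(x)\to 0 \text{ as } x\to p .
\]
Introduce $u_n = f^{n}(x_0)-p$, so the iteration becomes
\[
u_{n+1}=u_n + \bigl(\tfrac{1}{2}f''(p)+\varepsilon(f^{n}(x_0))\bigr)\,u_n^{2}.
\]
All the work will be in controlling this one-variable recurrence on a small one-sided neighborhood of $0$.

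For part (1), assume $f''(p)<0$ and set $c=-\tfrac{1}{2}f''(p)>0$. Choose $\delta>0$ so small that $|\varepsilon(x)|<c/2$ whenever $|x-p|<\delta$; this is possible because $\varepsilon(x)\to 0$. Then, as long as $0<u_n<\delta$, the coefficient in front of $u_n^{2}$ lies in $(-3c/2,-c/2)$, which gives the two-sided estimate
\[
-\tfrac{3c}{2}\,u_n^{2}\;\le\; u_{n+1}-u_n\;\le\;-\tfrac{c}{2}\,u_n^{2}<0 .
\]
Taking $0<u_0<\delta$, an induction on $n$ shows that $u_{n+1}\in(0,u_n)\subset(0,\delta)$: strict decrease follows from the upper bound above, and positivity follows from the lower bound together with $u_n<\delta$ (shrinking $\delta$ if necessary so that $\tfrac{3c}{2}\delta<1$, so that $u_n-\tfrac{3c}{2}u_n^{2}>0$). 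Hence $\{u_n\}$ is monotone decreasing, bounded below by $0$, and stays inside $(0,\delta)$, giving the ``stability from the right'' half of semiasymptotic stability with any $\varepsilon\ge \delta$ (and shrinking $\delta$ further for smaller $\varepsilon$).

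Next, let $L=\lim_{n\to\infty}u_n\ge 0$. Passing to the limit in the recurrence and using continuity of $\varepsilon$ at $p$ (where $\varepsilon(p)=0$), one obtains $L=L+\tfrac{1}{2}f''(p)\,L^{2}$, which forces $L=0$; this is the ``attracting from the right'' half. Combined, this yields semiasymptotic stability from the right. Part (2) is the mirror image: with $f''(p)>0$ and $u_0<0$ small, the coefficient of $u_n^{2}$ is positive, so $u_{n+1}-u_n>0$, meaning $u_n$ is monotone increasing while staying in $(-\delta,0)$, and the identical limit argument forces $u_n\uparrow 0$.

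The only delicate point is the bookkeeping around the error term: we must ensure that the ``$o(u_n^{2})$'' contribution never reverses the sign produced by $\tfrac{1}{2}f''(p)u_n^{2}$ nor pushes $u_n$ to the opposite side of $p$. The choice $|\varepsilon|<c/2$ achieves the first; choosing $\delta$ so small that the per-step decrement (or increment) is itself smaller than $u_n$ achieves the second. Once this is in place, the monotone-bounded convergence argument and the fixed-point identity $L+\tfrac{1}{2}f''(p)L^{2}=L$ do the rest; note that $f'''$ is not needed anywhere, only twice differentiability at $p$ with $f''(p)\neq 0$.
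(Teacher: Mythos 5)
Your argument is correct. Note that the paper itself offers no proof of this theorem: it states it after ``It is then easy to prove that'' and defers all unstated proofs in that section to the book by Elaydi, so there is no in-paper argument to compare against line by line. Your route --- writing $f(x)=x+\bigl(\tfrac12 f''(p)+\varepsilon(x)\bigr)(x-p)^2$ via Taylor's theorem with Peano remainder, bounding the error term by $c/2$ on a small one-sided neighborhood, and running a monotone-bounded-sequence argument on $u_n=f^n(x_0)-p$ --- is a quantitative version of the standard cobweb proof (which the paper itself gestures at later, in the proof of Proposition \ref{prop:quadsemi}, via ``$g_2'$ is decreasing near $0$, so $g_2'>1$ on one side and $g_2'<1$ on the other''). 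The classical argument uses $f'>0$ near $p$ to get order preservation plus the sign of $f(x)-x$ on each side; yours replaces order preservation by the explicit two-sided estimate $-\tfrac{3c}{2}u_n^2\le u_{n+1}-u_n\le-\tfrac{c}{2}u_n^2$, which has the advantage of needing only twice differentiability at $p$ and of making the induction keeping $u_n\in(0,\delta)$ completely explicit. One cosmetic remark: in the limit step you invoke ``continuity of $\varepsilon$ at $p$ where $\varepsilon(p)=0$,'' but if the limit $L$ were positive the iterates would converge to $p+L$, not $p$; the cleaner way to finish is simply to pass to the limit in $u_{n+1}-u_n\le-\tfrac{c}{2}u_n^2$, giving $0\le-\tfrac{c}{2}L^2$ and hence $L=0$. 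This does not affect the validity of the proof.
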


\section{Sarkovskii's theorem}
\label{sec:Sarkovskii}

This theorem is of great beauty, as it constructs a new --apparently artificial-- ordering of the natural numbers in order to determine what types of orbits may a map have. Consider then the following ordering of the natural numbers:

\[3\triangleright5\triangleright7\triangleright\cdots\triangleright2\cdot3\triangleright2\cdot5\triangleright2\cdot7\triangleright\cdots\triangleright2^{2}\cdot3\triangleright2^{2}\cdot5\triangleright\cdots\]

\[\triangleright2^{3}\cdot3\triangleright2^{3}\cdot5\triangleright\cdots\cdots\triangleright2^{3}\triangleright2^{2}\triangleright2\triangleright1.\]

That is: first all odd numbers, then two times each odd number, $2^{2}$ times each odd number, $2^{3}$ times each odd number, etc., and finally, the powers of two in descending order. This is \emph{Sarkovskii's ordering} of the natural numbers and it easy to see it is exhaustive .

Having defined the above, we can state

\begin{thm}[Sarkovskii]
	Let $f:\mathbb{R}\rightarrow\mathbb{R}$ continuous. Suppose that $f$ has a periodic point of prime period $k$. If $k\triangleright l$ in Sarkovskii's ordering, then $f$ also has a periodic point of period $l$.
\end{thm}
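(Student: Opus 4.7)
The plan is to approach the theorem through the combinatorial theory of interval covering relations induced by $f$, which is the standard device turning the Intermediate Value Theorem into a tool for producing periodic points. Say that a compact interval $I$ \emph{$f$-covers} a compact interval $J$, written $I\Rightarrow J$, if $f(I)\supseteq J$. The basic enabling lemma, immediate from continuity and the IVT, is that whenever $I\Rightarrow J$ one can find a compact subinterval $I'\subseteq I$ with $f(I')=J$. Iterating this observation yields the \emph{loop lemma}: if $I_0\Rightarrow I_1\Rightarrow\cdots\Rightarrow I_{n-1}\Rightarrow I_0$ is a cycle in the covering relation, then there exists $x\in I_0$ with $f^j(x)\in I_j$ for $0\leq j<n$ and $f^n(x)=x$. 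Thus, to produce a period-$\ell$ point of $f$ it suffices to exhibit a covering cycle of length $\ell$ among a convenient family of intervals, provided one can also argue that the prime period of the constructed point is exactly $\ell$ rather than a proper divisor.

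Given a periodic orbit $\{x_1<x_2<\cdots<x_k\}$ of prime period $k$, the natural family is the collection of consecutive intervals $I_j=[x_j,x_{j+1}]$ for $j=1,\ldots,k-1$, together with the combinatorial data of how $f$ permutes the $x_j$. I would then split the argument by the factorization $k=2^n\cdot q$ with $q$ odd. When $q=1$, so that $k$ is a power of two, the needed periods $2^m$ with $m<n$ are extracted by applying the construction recursively to the iterate $f^{2^{n-m-1}}$ and using the IVT to produce a fixed point of $f^{2^m}$ inside an appropriate subinterval, taking care that the periodic points so obtained are genuinely of the advertised prime period. When $q\geq 3$, I would first handle the pure case $k=q$ odd, and then reduce the mixed case $k=2^n q$ to that pure case by working with the iterate $f^{2^n}$, combined with the pure powers-of-two case to capture the remaining periods in Sarkovskii's tail.

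The core of the proof is therefore the odd case, and this is where I expect the main obstacle to lie. The essential combinatorial fact to establish is the existence of a \emph{Stefan cycle}: after relabelling, one can find intervals $J_1,\ldots,J_{k-1}$ among the $I_j$ such that $J_1\Rightarrow J_1$, $J_1\Rightarrow J_2\Rightarrow\cdots\Rightarrow J_{k-1}\Rightarrow J_1$, and $J_{k-1}\Rightarrow J_i$ for every $i$. Constructing this configuration requires choosing a point of the orbit whose image lies as far as possible on the opposite side of the orbit and carefully tracking where consecutive orbit points are mapped, exploiting the parity of $k$ to rule out short cycles. Once the Stefan cycle is in hand, a loop of any desired length $\ell$ with $k\triangleright\ell$ can be assembled from these arrows, and the loop lemma supplies a point $x$ with $f^\ell(x)=x$. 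The delicate remaining step, and the place where the oddness of $k$ and the structure of the Stefan cycle are genuinely used, is to verify that the prime period of $x$ is exactly $\ell$: one must rule out that $x$ already satisfies $f^{\ell'}(x)=x$ for some proper divisor $\ell'$ of $\ell$, which is done by choosing the assembled cycle so that the orbit visits an interval (typically $J_{k-1}$ or an endpoint) that any shorter cycle through the same intervals could not visit in the prescribed pattern.
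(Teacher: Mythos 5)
The paper does not actually prove this theorem: Sarkovskii's theorem is quoted as a standard result, with the proof deferred to the cited references (Devaney, Elaydi, Holmgren), so there is no in-paper argument to compare yours against. What you have written is the outline of the standard proof via covering relations and Stefan cycles, and the skeleton is the right one: the $f$-covering relation, the loop lemma obtained by pulling back subintervals with the Intermediate Value Theorem, the decomposition $k=2^n q$ with $q$ odd, the Stefan configuration $J_1\Rightarrow J_1$, $J_1\Rightarrow J_2\Rightarrow\cdots\Rightarrow J_{k-1}\Rightarrow J_1$, $J_{k-1}\Rightarrow J_i$ for the odd case, and the reduction of the mixed case to $f^{2^n}$. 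None of the steps you name is wrong.

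However, as submitted this is a roadmap rather than a proof. The two places where essentially all of the difficulty lives are only announced, not carried out. First, the existence of the Stefan cycle for an odd-period orbit is asserted (``after relabelling, one can find intervals \ldots''), but the combinatorial construction --- choosing the point $x_j$ with $f(x_j)\geq x_{j+1}$ and $f(x_{j+1})\leq x_j$, and then inductively tracking which side of the orbit successive images land on, using the parity of $k$ to show the chain cannot close early --- is the heart of the theorem and is absent. Second, the exact-prime-period verification is described only as ``done by choosing the assembled cycle so that the orbit visits an interval that any shorter cycle could not visit''; the actual argument (a point following the loop with a shorter true period would be forced to be an endpoint of one of the $J_i$, hence a point of the original period-$k$ orbit, contradiction) needs to be written out, and it is different in the three cases $\ell>k$ odd, $\ell<k$ even, and $\ell=2^m$. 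Similarly, in the powers-of-two case the passage from a prime-period-$2^m$ point of an iterate $f^{2^j}$ back to a prime-period-$2^{m+j}$ point of $f$ requires a divisibility argument you have not supplied. Until those three gaps are filled, the proposal establishes nothing beyond what the loop lemma itself gives.
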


Thus, Sarkovskii's theorem gives a nearly complete answer to the question of how many periodic points and of which periods can a map have, although it does not tell us their nature in terms of their stability, nor does it tell us how to find such points.

Some important consequences of Sarkovskii's Theorem can be summarized as follows:
\begin{itemize}
	\item If $f$ has periodic points with periods different from a power of 2, then $f$ has infinite periodic points of infinitely different periods.
	\item If $f$ only has a finite number of periodic points, then necessarily all points have periods of powers of 2.
	\item Period 3 is the ``greatest'' period in Sarkovskii's ordering, therefore, it implies the existence of all other periods.
\end{itemize}

Sarkovskii's theorem is without doubt a central result for the theory of discrete dynamical systems.

\section{Bifurcation theory}

We will now consider families of maps instead of single maps, as this is often useful in applications and is also the main object of study of this work. For example, in practice, the coefficients of a polynomial map that models some natural, physical, economic or social discrete process can only be determined up to a certain precision, which gives rise to a family of maps as a set bounded by the uncertainties of the coefficients.

The aim of bifurcation theory is then to study the changes that occur in a map when its parameters are varied. This changes commonly refer qualitatively to its periodic points structure and stability properties, but they can also involve other changes. Consider families of functions in one real variable which depend upon a single real parameter (uniparametric). Consider the bivariate function $G(x,\lambda)=f_{\lambda}(x)$, where for fixed $\lambda\in\mathbb{R}$, $f_{\lambda}(x)\in C^{\infty}$. We will assume also that $G$ depends smoothly on $\lambda$. Some examples of such families of functions are:

\begin{itemize}
	\item $F_{\mu}(x)=\mu x(1-x)$.
	\item $E_{\lambda}(x)=\lambda e^{x}$.
	\item $S_{\lambda}(x)=\lambda\sin(x)$.
	\item $Q_{c}(x)=x^{2}+c$.
\end{itemize}

With this in mind, we will next give a rather informal definition of bifurcation.

\begin{defn}[Bifurcation]
	Let $f_{\lambda}(x)$ be a uniparametric family of functions. Then, we say there is a \emph{bifurcation} at $\lambda_{0}$ if there exists $\epsilon>0$ such that if $a$ and $b$ satisfy $\lambda_{0}-\epsilon<a<\lambda_{0}$ and $\lambda_{0}<b<\lambda_{0}+\epsilon$, then the ``dynamics'' of $f_{a}(x)$ is different from the ``dynamics'' of $f_{b}(x)$. That is, the ``dynamics'' of the function $f_{\lambda} (x)$ \emph{changes} when $\lambda$ passes through the value $\lambda_{0}$.
\end{defn}

The ``changes in the dynamics'' of the above definition refer to qualitative changes in the structure of periodic points of $f_{\lambda}(x)$. To make the above notion precise, we must consider separately the different types of bifurcations that may occur. For real maps, the \emph{saddle-node} and \emph{period doubling} bifurcations are the most common. We will next discuss the concept of saddle-node bifurcation through an example and the period doubling bifurcation is discussed in more detail below.

\begin{example}[Saddle-node or Tangent Bifurcation]
	Let's consider the family of functions $E_{\lambda}(x)=\lambda e^{x}$, with $\lambda>0$. A bifurcation occurs when $\lambda=1/e$. We have three cases:
	\begin{enumerate}
		\item If $\lambda>1/e$ the graph of $E_{\lambda}(x)$ and $y=x$ do not intersect each other, so that $E_{\lambda}$ has no fixed points. $E_{\lambda}^{n}(x)\rightarrow\infty\,\forall x$.
		\item When $\lambda=1/e$, the graph of $E_{\lambda}$ tangentially touches the identity diagonal at $(1,\,1)$. $E_{\lambda}(1)=1$ is the only fixed point. If $x<1$, $E_{\lambda}^{n}(x)\rightarrow1$ and if $x>1$, $E_{\lambda}^{n}(x)\rightarrow\infty$.
		\item When $0<\lambda<1/e$, the graph of $E_{\lambda}$ crosses the identity diagonal two times, at $q$ with $E_{\lambda}'(q)<1$ and at $p$ with $E_{\lambda}'(p)>1$, with which we see they are an attractor and a repellor, respectively.
	\end{enumerate}
	Figure \ref{fig:Saddle-Node} presents some of these characteristics graphically, and the corresponding phase portraits are presented in figure \ref{fig:Saddle-Node-PhaseDiag}. Figure \ref{fig:Bifurcation-Diagram-Example} presents a common and very useful way of depicting bifurcations of real maps, which will be frequently used in this work: a \emph{bifurcation diagram}; in it, the horizontal axis represents the values of the parameter $\lambda$ in which the family is varied, and the vertical axis represents the values of the periodic points present for each $f_{\lambda}$.
\end{example}

\begin{figure}
	\centering
	\includegraphics[width=0.5\columnwidth]{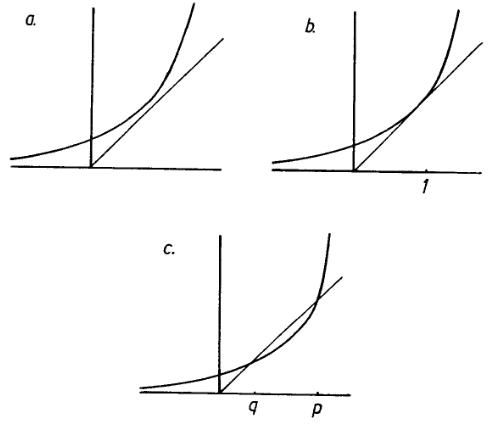}
	\caption{Graphs of $E_{\lambda}(x)=\lambda e^{x}$ where (a) $\lambda>1/e$, (b) $\lambda=1/e$, and (c) $0<\lambda<1/e$. Reproduced from \cite{DevaneyIntroCDD}.}
	\label{fig:Saddle-Node}
\end{figure}

\begin{figure}
	\centering
	\includegraphics[width=0.5\columnwidth]{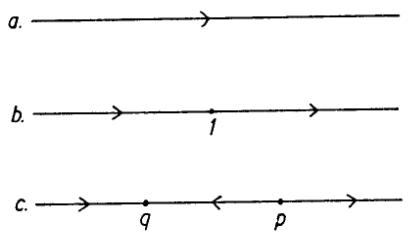}
	\caption{Phase portraits of $E_{\lambda}(x)=\lambda e^{x}$ for (a) $\lambda>1/e$, (b) $\lambda=1/e$, and (c) $0<\lambda<1/e$. Reproduced from \cite{DevaneyIntroCDD}.}
	\label{fig:Saddle-Node-PhaseDiag}
\end{figure}

\begin{figure}
	\centering
	\includegraphics[width=0.33\columnwidth]{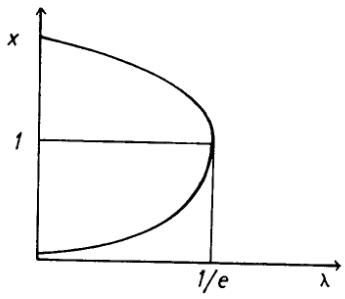}
	\caption{Bifurcation diagram of $E_{\lambda}(x)=\lambda e^{x}$. $x$ is plotted against $\lambda$. Reproduced from \cite{DevaneyIntroCDD}.}
	\label{fig:Bifurcation-Diagram-Example}
\end{figure}

The above example suggests that bifurcations may occur when non-hyperbolic fixed points are present. Indeed, this the only case when bifurcations of fixed points occur (in the sense of one fixed point giving birth to two others), as the next theorem demonstrates.

\begin{thm}
	\label{thm:fixedp}
	Let $f_{\lambda}$ be a one-parameter family of functions and suppose that $f_{\lambda_0}(x_0)=x_0$ and $f'_{\lambda_0}(x_0)\neq 1$. Then there are intervals $I$ about $x_0$ and $N$ about $\lambda_0$ and a smooth function $p:N\rightarrow I$ such that $p(\lambda_0)=x_0$ and $f_{\lambda}(p(\lambda))=p(\lambda)$. Moreover, $f_{\lambda}$ has no other fixed points in $I$.
\end{thm}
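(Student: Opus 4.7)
The plan is to recognize this statement as a direct application of the Implicit Function Theorem to the equation that characterizes fixed points of $f_\lambda$. Specifically, I would introduce the auxiliary function $G(x,\lambda) = f_\lambda(x) - x = G(x,\lambda)$, which by assumption is smooth in both variables in a neighborhood of $(x_0,\lambda_0)$ (since the family $G(x,\lambda)=f_\lambda(x)$ is assumed $C^\infty$ in $x$ and smooth in $\lambda$). Fixed points of $f_\lambda$ correspond exactly to zeros of $G(\cdot,\lambda)$, so the theorem reduces to showing that the zero set of $G$ near $(x_0,\lambda_0)$ is the graph of a smooth function $p$.

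The key step is to verify the hypothesis of the IFT at $(x_0,\lambda_0)$. We have $G(x_0,\lambda_0) = f_{\lambda_0}(x_0) - x_0 = 0$, and the partial derivative with respect to $x$ is
\[
\frac{\partial G}{\partial x}(x_0,\lambda_0) = f'_{\lambda_0}(x_0) - 1 \neq 0
\]
by the standing hypothesis $f'_{\lambda_0}(x_0) \neq 1$. The Implicit Function Theorem then furnishes open intervals $I \ni x_0$ and $N \ni \lambda_0$ together with a smooth map $p : N \to I$ such that, for $(x,\lambda) \in I \times N$, the equation $G(x,\lambda)=0$ holds if and only if $x = p(\lambda)$. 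In particular $p(\lambda_0) = x_0$ and $f_\lambda(p(\lambda)) = p(\lambda)$ for every $\lambda \in N$.

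The uniqueness clause in the IFT is precisely what gives the ``moreover'' part: no other $x \in I$ satisfies $f_\lambda(x)=x$ for any $\lambda\in N$, since any such $x$ would be another zero of $G(\cdot,\lambda)$ in $I$, contradicting uniqueness. Thus $f_\lambda$ has exactly one fixed point in $I$, namely $p(\lambda)$, and it depends smoothly on $\lambda$, as required.

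The only potential obstacle I anticipate is a purely technical one, namely the need to shrink $I$ and $N$ so that $\partial G/\partial x$ remains non-zero throughout $I\times N$ (guaranteed by continuity of $f'_\lambda(x)$ in both variables, which follows from smoothness of $G$), so that the IFT hypotheses hold uniformly. No deeper difficulty arises, and in particular the geometric content of the theorem, that a hyperbolic-in-the-derivative-sense fixed point persists smoothly under small perturbations of $\lambda$, is exactly the content of the IFT in this one-dimensional setting.
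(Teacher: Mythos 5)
Your proof is correct and follows exactly the same route as the paper's: define $G(\lambda,x)=f_{\lambda}(x)-x$, verify $G(\lambda_0,x_0)=0$ and $\frac{\partial G}{\partial x}(\lambda_0,x_0)=f'_{\lambda_0}(x_0)-1\neq 0$, and apply the Implicit Function Theorem, with the uniqueness clause giving the ``moreover'' part. No differences worth noting.
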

\begin{proof}
	Consider the function defined by $G(\lambda,x)=f_{\lambda}(x)-x$. By hypothesis, $G(\lambda_0,x_0)=0$ and
	\begin{equation}
		\frac{\partial G}{\partial x}(\lambda_0,x_0)=f'_{\lambda_0}(x_0)-1\neq 0,
	\end{equation}
	using the other hypothesis. Then, by the Implicit Function Theorem \cite{Marsden}, there are intervals $I$ about $x_0$ and $N$ about $\lambda_0$, and a smooth function $p:N\rightarrow I$ such that $p(\lambda_0)=x_0$ and $G(\lambda,p(\lambda))=0$ for all $\lambda\in N$; that is, $p(\lambda)$ is fixed by $f_{\lambda}$ for $\lambda\in N$. Moreover, $G(\lambda,x)\neq 0$ unless $x=p(\lambda)$.
\end{proof}

The above theorem obviously holds for periodic points by replacing $f_{\lambda}$ with $f^n_{\lambda}$.

\section{Chaos}
\label{sec:Chaos}

In this section we will precise what we understand by the mathematical concept of \emph{chaos}. It must be said though, that there is not a single universally accepted definition for this concept, as variations occur; however, they preserve the same general ``spirit''. We will here adopt one of the most popular definitions, due to \cite{DevaneyIntroCDD}.

Before we can properly define the concept of chaos, we need some other auxiliary concepts.

\begin{defn}[Topological transitivity]
	Let $(X,\,d)$ be a metric space and $f:\,X\rightarrow X$ a map. Then $f$ is said to be \emph{topologically transitive} if for any pair of nonempty open sets $U,\,V\subset X$ there exists $k>0$ such that $f^k(U)\cap V\neq \emptyset$.
\end{defn}

In plain words, what this definition states is that a map $f$ is topologically transitive if any arbitrarily small set of the domain of $f$ contains points that are mapped by $f$ to any other arbitrarily small set of the codomain in a finite number of iterations. An immediate consequence of topological transitivity is that the domain cannot be divided into two nonempty disjoint sets which are invariant under $f$. It is easy to see that if $f$ has a dense orbit in $X$ then it is topologically transitive (the converse is also true for compact subsets of $\mathbb{R}$ or $S^1$).

Another necessary concept for the definition of chaos is the following.

\begin{defn}[Sensitive Dependence to Initial Conditions]
	Let $(X,\,d)$ be a metric space and $f:\,X\rightarrow X$ a map. $f$ is said to posses \emph{sensitive dependence to initial conditions} if there exists $\varepsilon >0$ such that for any $x\in X$ and any neighborhood $N$ of $x$, there exists $y\in N$ and $n\geq 0$ such that $\vert f^n(x)-f^n(y)\vert > \varepsilon$.
\end{defn}

Intuitively, sensitive dependence to initial conditions means that no matter how close two points are to each other, they will eventually be separated by at least some distance $\varepsilon$ under a finite number of iterations of $f$.

With these preliminary definitions we can now state the main definition of this section.

\begin{defn}
	Let $(X,\,d)$ be a metric space and $f:\,X\rightarrow X$ a map. Then $f$ is said to be \emph{chaotic} on $X$ if
	\begin{enumerate}
		\item $f$ is topologically transitive,
		\item the set of periodic points of $f$ is dense in $X$, and
		\item $f$ has sensitive dependence on initial conditions.
	\end{enumerate}
\end{defn}

So a chaotic map ``mixes up'' its domain set by means of its topological transitivity but everywhere you look there are periodic points arbitrarily close and, finally, it is practically impossible to predict orbits numerically; these are the three elements that make up a chaotic map, under this definition. Actually, it was later shown \cite{Banks92} that topological transitivity and denseness of periodic points imply sensitive dependence to initial conditions, but no other pair of conditions imply the other. However, for continuous maps on intervals of $\mathbb{R}$, topological transitivity \emph{implies} that the set of periodic points is dense \cite{VellekoopBerglund}, so that in this case, by the above, topological transitivity alone actually implies chaos.

\section{The period doubling route to chaos}
\label{sub:PeriodDoubling}

Perhaps one of the most important types of bifurcations is the \emph{period doubling bifurcation}. This type of bifurcation will play a central role in the main results of this work. The most commonly know map which presents period doubling bifurcation is the famous logistic map. The issue is that the \emph{logistic quadratic map} $f_{\lambda}(x)=\lambda x(1-x)$ has simple dynamics for $0\leq\lambda\leq3$ but it becomes chaotic when $\lambda\geq4$ (we will precise the notion of chaos in the next section). The natural question here is: how exactly is it that $f_{\lambda}$ becomes chaotic as $\lambda$ increases? Where do the infinitely many periodic points come from when $\lambda$ is large? In this section we will see that ``period doubling'' bifurcation is central to this.

\emph{Sarkovskii's theorem} provides a partial answer to the question of how many periodic points emerge when the parameter is varied. Before $f_{\lambda}$ can have infinite periodic points of distinct periods, it must first have periodic points of all the periods of the form $2^{j}$ with $j\in\mathbb{N}$. Local bifurcation theory provides two typical ways in which these points may emerge: by means of saddle-node bifurcations or through period doublings. The question is then which type of bifurcations occur as $f_{\lambda}$ becomes chaotic.

The typical way in which $f_{\lambda}$ becomes chaotic is that $f_{\lambda}$ undergoes a series of  period doubling bifurcations. This is not always the case, but it is a typical ``route to chaos''.

\begin{thm}[Period Doubling Bifurcation]
	\label{thm:perioddoubling}
	Suppose that
	\begin{enumerate}
		\item $f_{\lambda}(x_{0})=x_{0}$ for all $\lambda$ in an interval around $\lambda_{0}$.
		\item $f_{\lambda}'(x_{0})=-1$.
		\item $\frac{\partial(f_{\lambda}^{2})'}{\partial\lambda}\vert_{\lambda=\lambda_{0}}(x_0)\neq0.$
	\end{enumerate}
	Then there exists an interval $I$ around $x_{0}$ and a function $p:I\rightarrow\mathbb{R}$ such that
	\[f_{p(x)}(x)\neq x\]
	but
	\[f_{p(x)}^{2}(x)=x.\]
\end{thm}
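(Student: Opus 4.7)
My plan is to mimic the strategy of Theorem \ref{thm:fixedp}, but applied to $f_\lambda^2$ instead of $f_\lambda$, with one extra twist to kill off the trivial solution. Define
\[ G(\lambda, x) \;=\; f_\lambda^2(x) - x. \]
Its zero set contains all period-$2$ (and fixed) points of $f_\lambda$. Hypothesis 1 gives $G(\lambda, x_0) = 0$ for all $\lambda$ near $\lambda_0$, so $G$ vanishes along the entire vertical line $x = x_0$. This means we cannot apply the Implicit Function Theorem directly to $G$ at $(\lambda_0, x_0)$; we first have to divide out this known branch of solutions.

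By Hadamard's lemma (or Taylor's theorem with integral remainder), smoothness of $G$ together with $G(\lambda, x_0) \equiv 0$ lets us write
\[ G(\lambda, x) \;=\; (x - x_0)\, H(\lambda, x), \qquad H(\lambda, x) \;=\; \int_0^1 \frac{\partial G}{\partial x}\bigl(\lambda,\, x_0 + t(x-x_0)\bigr)\, dt, \]
with $H$ smooth near $(\lambda_0, x_0)$. Nontrivial period-$2$ points then correspond exactly to zeros of $H$. Evaluating at $x = x_0$ gives $H(\lambda, x_0) = (f_\lambda^2)'(x_0) - 1$, so $H(\lambda_0, x_0) = \bigl(f'_{\lambda_0}(x_0)\bigr)^2 - 1 = (-1)^2 - 1 = 0$ by hypothesis 2, and moreover
\[ \frac{\partial H}{\partial \lambda}(\lambda_0, x_0) \;=\; \frac{\partial (f_\lambda^2)'}{\partial \lambda}\bigg|_{\lambda = \lambda_0}(x_0) \;\neq\; 0 \]
by hypothesis 3. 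The Implicit Function Theorem therefore yields an interval $I$ around $x_0$ and a smooth function $p : I \to \mathbb{R}$ with $p(x_0) = \lambda_0$ such that $H(\lambda, x) = 0$, and hence $f_{p(x)}^2(x) = x$, for every $x \in I$.

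It remains to verify $f_{p(x)}(x) \neq x$ when $x \neq x_0$. For this I invoke Theorem \ref{thm:fixedp} applied to the original map $f_\lambda$: since $f'_{\lambda_0}(x_0) = -1 \neq 1$, there is a neighborhood in which $x_0$ is the unique fixed point of $f_\lambda$ as $\lambda$ varies near $\lambda_0$. Shrinking $I$ if necessary so that $(p(x), x)$ stays inside this neighborhood, we conclude that the only $x \in I$ satisfying $f_{p(x)}(x) = x$ is $x = x_0$; every other $x \in I$ therefore produces a genuine prime-period-$2$ orbit $\{x, f_{p(x)}(x)\}$.

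The main obstacle is the factorization step: one must recognize that Theorem \ref{thm:fixedp} cannot be invoked on $G$ directly because $\partial G/\partial x(\lambda_0, x_0) = (f_{\lambda_0}^2)'(x_0) - 1 = 0$, and that the right remedy is to peel off the trivial branch $x \equiv x_0$ via Hadamard's lemma before applying IFT. Everything else — the derivative computations and the appeal to the previous theorem to rule out spurious fixed points — is routine once the reduced function $H$ is in hand.
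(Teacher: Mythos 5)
Your proof is correct and follows essentially the same route as the paper: define $G(\lambda,x)=f_\lambda^2(x)-x$, divide out the trivial branch $x=x_0$, apply the Implicit Function Theorem to the quotient using hypotheses (2) and (3), and rule out spurious fixed points via Theorem \ref{thm:fixedp}. The only (cosmetic) difference is that you justify the smoothness of the quotient cleanly via Hadamard's lemma, where the paper defines the same function $B=G/(x-x_0)$ piecewise and asserts its smoothness by ``straightforward calculation of the limits of the derivatives.''
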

\begin{proof}
	Let $G(\lambda,\, x)=f^2_{\lambda}(x)-x$. Then, from hypotheses (1) and (2),
	\[\frac{\partial G}{\partial\lambda}(\lambda_0,\,x_0)=0\]
	so that we cannot apply the implicit function theorem directly. Thus, to rectify the situation, we set
	\begin{equation}
		B(\lambda,\,x)=
		\begin{cases}
			G(\lambda,\,x)/(x-x_0) & \mathrm{if}\,x\neq x_0,\\
			\frac{\partial G}{\partial x}(\lambda_0,\,x_0) & \mathrm{if}\,x=x_0.
		\end{cases}
	\end{equation}
	
	Straightforward calculation of the limits of the derivatives allow us to verify that $B$ is smooth and satisfies
	\begin{equation}
		\begin{aligned}
			\frac{\partial B}{\partial x}(\lambda_0,\,x_0)		& =\frac{1}{2}\frac{\partial^2 G}{\partial x^2}(\lambda_0,x_0)\\
			\frac{\partial^2 B}{\partial x^2}(\lambda_0,\,x_0)	& =\frac{1}{3}\frac{\partial^3 G}{\partial x^3}(\lambda_0,x_0).
		\end{aligned}
	\end{equation}
	
	We now observe that, once again using hypotheses (1) and (2),
	\begin{equation}
		\label{eq:Bl0x0}
		\begin{aligned}
			B(\lambda_0,\,x_0) 	& =\frac{\partial G}{\partial x}(\lambda_0,\,x_0)\\
			& =\left.\frac{\partial}{\partial x}f^2_{\lambda_0}(x)\right|_{x=x_0}-1\\
			& =\left.\frac{\partial}{\partial x}f_{\lambda_0}\left(f_{\lambda_0}(x)\right)\right|_{x=x_0}-1\\
			& =\left[\left.\frac{\partial}{\partial y}f_{\lambda_0}(y)\right|_{y=f_{\lambda_0}(x)}\cdot\frac{\partial}{\partial x}f_{\lambda_0}(x)\right]_{x=x_0}-1\\
			& =\frac{\partial f_{\lambda_0}}{\partial x}(f_{\lambda_0}(x_0))\frac{\partial f_{\lambda_0}}{\partial x}(x_0)-1\\
			& =\left[\frac{\partial f_{\lambda_0}}{\partial x}(x_0)\right]^2-1\\
			& =\left[f_{\lambda_0}'(x_0)\right]^2-1\\
			& =(-1)^2-1\\
			& =0.
		\end{aligned}
	\end{equation}
	
	Moreover, now using hypothesis (3),
	\begin{equation}
		\begin{aligned}
			\frac{\partial B}{\partial \lambda}(\lambda_0,\,x_0)	& =\frac{\partial}{\partial\lambda}\left(\frac{\partial G}{\partial x}(\lambda_0,\,x_0)\right)\\
			& =\frac{\partial}{\partial\lambda}\left(\frac{\partial}{\partial x}f^2_{\lambda_0}(x_0)-1\right)\\
			& =\frac{\partial^2}{\partial\lambda\partial x}f^2_{\lambda_0}(x_0)\\
			& =\frac{\partial}{\partial\lambda}(f^2_{\lambda_0})'(x_0)\\
			& \neq 0.
		\end{aligned}
	\end{equation}
	
	Therefore, by the implicit function theorem \cite{Marsden}, there exists a $C^1$ map $p(x)$ defined on an interval around $x_0$ such that $p(x_0)=\lambda_0$ and $B(p(x),x)=0$. So that, for $x\neq x_0$,
	\[\frac{1}{x-x_0}G(p(x),x)=0.\]
	
	Consequently, $G(p(x),x)=0$ and then $f^2_{p(x)}(x)=x$ and thus $x$ is of period 2 for $\lambda=p(x)$ and for $x$ in an interval around $x_0$. Notice that $x$ is not fixed by $f_{p(x)}$ because we can apply theorem \ref{thm:fixedp} in this case and, then, the curve of fixed points is unique.
	
	On the other hand, from the chain rule we have
	\[\frac{d}{dx}B(p(x),x)=\frac{\partial B}{\partial x}+\frac{\partial B}{\partial\lambda}p'(x)=0.\]
	
	From which,
	\begin{equation}
		\label{eq:dp}
		p'(x)=-\frac{\frac{\partial B}{\partial x}(p(x),x)}{\frac{\partial B}{\partial\lambda}(p(x),x}.
	\end{equation}
	
	We know that, for $x=x_0$, the denominator in \eqref{eq:dp} is different from zero and, on the other hand, the numerator is equal to $\frac{1}{2}\frac{\partial^2 G}{\partial x^2}$ and
	\begin{equation}
		\begin{aligned}
			\frac{\partial^2 G}{\partial x^2}(\lambda_0,x_0)	& =\frac{1}{2}\left.\left\{f''_{\lambda}(f_{\lambda}(x))[f'_{\lambda}(x)]^2+f'_{\lambda}(f_{\lambda}(x))f''_{\lambda}(x)\right\}\right|_{x=x_0,\,\lambda=\lambda_0}\\
			& =\frac{1}{2}\left\{f''_{\lambda_0}(x_0)[-1]^2-f''_{\lambda_0}(x_0)\right\}\\
			& =0.
		\end{aligned}
	\end{equation}
	Where we have once again used hypothesis (2). Ergo, $p'(x_0)=0$.
\end{proof}

Also, differentiating equation \eqref{eq:dp} again and evaluating in $x=x_0$, it is easy to see that
\begin{equation}
	\label{eq:d2p}
	p''(x_0)=-\frac{\frac{\partial^2 B}{\partial x^2}(\lambda_0,x_0)}{\frac{\partial B}{\partial\lambda}(\lambda_0,x_0)}.
\end{equation}

We already know that the denominator in \eqref{eq:d2p} above is not null and, also,
\begin{equation}
	\begin{aligned}
		\frac{\partial^2 B}{\partial x^2}(\lambda_0,x_0)	& =\frac{1}{3}\frac{\partial^3 G}{\partial x^3}\\
		& =\frac{1}{3}\left\{f'''_{\lambda}(f_{\lambda}(x))\left[f'_{\lambda}(x)\right]^3+2f''_{\lambda}(f_{\lambda}(x))f'_{\lambda}(x)+f''_{\lambda}(f_{\lambda}(x))f''_{\lambda}(x)f'_{\lambda}(x)\right.\\
		& \quad\left.\left.+f'_{\lambda}(f_{\lambda}(x))f'''_{\lambda}(x)\right\}\right|_{x=x_0,\,\lambda=\lambda_0}\\
		& =\frac{1}{3}\left\{-2f'''_{\lambda_0}(x_0)-3[f''_{\lambda_0}(x_0)]^2\right\}\\
		& =\frac{2}{3}\left\{\frac{f'''_{\lambda_0}(x_0)}{f'_{\lambda_0}(x_0)}-\frac{3}{2}\left[\frac{f''_{\lambda_0}(x_0)}{f'_{\lambda_0}(x_0)}\right]^2\right\}\\
		& =\frac{2}{3}\,Sf_{\lambda_0}(x_0),
	\end{aligned}
\end{equation}

where $Sf_{\lambda}$ is the Schwarzian derivative of $f_{\lambda}$ and, yet once again, we used hypothesis (2) from the theorem above. Therefore, substituting in equation \eqref{eq:d2p} we have
\begin{equation}
	p''(x_0)=-\frac{-\frac{2}{3}\,Sf_{\lambda_0}(x_0)}{\frac{\partial}{\partial\lambda}(f^2_{\lambda_0})'(x_0)}.
\end{equation}

So that, if $Sf_{\lambda}\neq0$ then $p''(x_0)\neq0$ and the curve $p(x)$ is either concave left or right. Along with $p(x_0)=0$ this gives us an idea of the geometry of the period 2 curve $p$ near the bifurcation point $x_0$. Notice that $x_0$ does not bifurcate in the sense of giving birth to another fixed point (as was the case in the pitchfork bifurcation) but, rather, $x_0$ continues to be the only fixed point in a interval around $\lambda_0$ (see theorem \ref{thm:fixedp}).

In the case of the logistic map, the period doubling bifurcation repeats over and over again for the fixed point $x^{*}=\frac{\lambda-1}{\lambda}$ as the value of the parameter $\lambda$ is increased, starting from $\lambda=3$:
\begin{itemize}
	\item Between $\lambda_{0}=1<\lambda<3=\lambda_{1}$, $f_{\lambda}$ has a single stable fixed point, $x^{*}$ (of period $2^{0}=1$).
	\item For $\lambda_{1}=3<\lambda<1+\sqrt{6}=\lambda_{2}$, $x^{*}$ loses its stability and 'gives rise' to an attracting cycle of period two ($2^{1}$), whose orbital points separate from $x^{*}$ as
	$\lambda$ increases.
	\item For $\lambda_{2}=1+\sqrt{6}<\lambda<3.54409=\lambda_{3}$ \cite{Elaydi}, the attracting cycle of period two loses in turn its stability and gives rise to an attracting cycle of period $2^{2}$.
	\item For $\lambda>3.54409=\lambda_{3}$, the period four cycle also loses its stability and bifurcates into an asymptotically stable cycle of period $2^{3}$.
\end{itemize}

The above mentioned process repeats itself indefinitely and produces a sequence $\{\lambda_{k}\}_{k=1}^{\infty}$. The \emph{Feigenbaum sequence} $\{\lambda_{k}\}_{k=0}^{\infty}$ describes the values of the parameter $\lambda$ for which \emph{period doubling bifurcations} arise in a unimodal map \cite{Feigenbaum}. The sequence converges exponentially. The important point to remark here is that the rate of convergence of these sequences (whose values depend on the specific unimodal map), is constant for every unimodal function and is given by $\delta\approx4.669201609\cdots$, which gives rise to the so-called property of \emph{universality}. In fact, its importance deserved naming it \emph{Feigenbaum's constant}, after \cite{Feigenbaum} discovered it in \cite{Feigenbaum} (though amazingly ignored for over a decade).

\section{Topological conjugacy}
\label{sec:Conjugacy}

Here we will define the important concept of \emph{topological conjugacy} which relates the dynamics of two different maps by means of a third one. The importance of conjugacy is that it preserves the stability and chaotic properties of a map, so that one map can be analyzed by means of analyzing another one, presumably simpler, so that the analysis becomes easier.

\begin{defn}[Topological Conjugacy]
	Let $f:A\rightarrow A$ and $g:B\rightarrow B$ be two maps. Then $f$ and $g$ are said to be \emph{topologically conjugate}, denoted $f\approx g$, if there exists a homeomorphism $h:A\rightarrow B$ such that $h\circ g=g\circ h$. The homeomorphism is called a \emph{topological conjugacy}. We also say that $f$ is $h$-conjugate to $g$ to emphasize the importance of the homeomorphism $h$.
\end{defn}

It is easy to prove that conjugacy is an equivalence relation. Topologically conjugate maps have equivalent dynamics. For example, if $f\approx g$ and $x$ is a fixed point of $f$ then $h(x)$ is a fixed point of $g$; and, also, $f^k \approx g^k$ for all $k\in\mathbb{N}$. Similarly, periodic orbits of $f$ are mapped to analogue periodic orbits of $g$.

\begin{thm}
	\label{thm:Conjugacy}
	Let $f:A\rightarrow A$ be $h$-conjugate to $g:B\rightarrow B$. If $f$ is chaotic on $A$, then $g$ is chaotic on $B$.
\end{thm}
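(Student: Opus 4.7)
The plan is to verify each of the three defining conditions of chaos for $g$ by transferring them from $f$ via the conjugacy relation. The two key facts I would exploit throughout are that iteration respects conjugacy, i.e.\ $g^{n}\circ h = h\circ f^{n}$ for all $n\in\mathbb{N}$ (proved by an easy induction from $g\circ h = h\circ f$), and that $h$ and $h^{-1}$ are both continuous, so each sends open sets to open sets. These two facts let me push the first two (purely topological) conditions back and forth between $A$ and $B$ essentially for free.

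For topological transitivity of $g$, given nonempty open $U',V'\subset B$, I would set $U=h^{-1}(U')$ and $V=h^{-1}(V')$; these are nonempty open subsets of $A$. By transitivity of $f$ there is some $k>0$ with $f^{k}(U)\cap V\neq\emptyset$. Applying $h$ and using $g^{k}\circ h = h\circ f^{k}$ together with the fact that $h$ is a bijection yields $g^{k}(U')\cap V'\neq\emptyset$, as required. For density of the periodic points of $g$, note first that if $p\in\mathrm{Per}_{n}(f)$ then $g^{n}(h(p))=h(f^{n}(p))=h(p)$, so $h$ carries periodic points of $f$ to periodic points of $g$ (and $h^{-1}$ does the reverse). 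Given any nonempty open $V'\subset B$, the preimage $h^{-1}(V')$ is open and nonempty in $A$, hence contains some periodic point $p$ of $f$ by density; then $h(p)\in V'$ is periodic for $g$.

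The sensitive dependence condition is the main subtlety, since it is a metric rather than a purely topological property and a general homeomorphism need not be uniformly continuous. I see two natural routes. The clean route is to appeal to the result of Banks et al.\ cited in Section \ref{sec:Chaos}: topological transitivity together with density of periodic points already imply sensitive dependence, so sensitivity for $g$ follows automatically from the previous paragraph. The more self-contained alternative is to prove sensitivity directly under the working hypothesis that $A$ is compact (which is the relevant case for real polynomial maps restricted to an invariant interval): then $h$ and $h^{-1}$ are uniformly continuous, and a sensitivity constant $\varepsilon$ for $f$ can be converted into a constant $\varepsilon'$ for $g$ by taking a uniform modulus of $h^{-1}$ on balls of radius $\varepsilon$, after which the definition of sensitivity transfers by applying $h$ to the separating pair produced in $A$. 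I expect this last step to be where the main obstacle lies; invoking the Banks et al.\ theorem side-steps it entirely and is the route I would follow in the write-up.
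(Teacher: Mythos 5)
Your proposal is correct, but there is nothing in the thesis to compare it against: the author does not prove Theorem~\ref{thm:Conjugacy} at all, instead deferring to \cite{Elaydi} and only remarking that the full homeomorphism hypothesis can be weakened to $h$ being onto, continuous and open (semiconjugacy). Your argument is the standard one and each step checks out: the induction $g^{n}\circ h=h\circ f^{n}$, the transfer of transitivity and of density of periodic points via $h^{-1}$ of open sets, and the recognition that sensitive dependence is the only non-topological clause. Two observations. First, your preferred route for sensitivity --- invoking the Banks et al.\ result that transitivity plus dense periodic points imply sensitivity --- is precisely what the thesis itself sets up in Section~\ref{sec:Chaos}, so it is the natural choice here; just note that that theorem requires the underlying space to be infinite, a hypothesis that is implicit but worth stating, and that your compactness-based alternative is genuinely needed only if one insists on transferring the \emph{same} sensitivity constant rather than merely the property. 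Second, your proofs of the first two clauses use $h^{-1}$ only through ``$h^{-1}(\text{open})$ is open and nonempty,'' which follows from continuity and surjectivity of $h$ alone; so with trivial rewording (dropping the appeal to bijectivity) your argument already establishes the stronger semiconjugacy version the paper mentions, which is a small bonus your write-up could make explicit.
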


Theorem \ref{thm:Conjugacy} will play a key role in the importance of the study of ``canonical polynomial maps'' in chapters \ref{cha:QuadRev}, \ref{cha:Cubic} and \ref{cha:Generalization}. The proof of this theorem can be found in \cite{Elaydi}, where it can be seen that, in fact, the topological conjugacy $h$ does not need to be a homeomorphism for theorem \ref{thm:Conjugacy} to hold, it only needs to be onto, continuous and open, which leads to the concept of

\begin{defn}[Semiconjugacy]
	The maps $f:A\rightarrow A$ and $g:B\rightarrow B$ are said to be \emph{semiconjugate} if there exists a map $h:A\rightarrow B$ onto, continuous and open, such that $h\circ f= g\circ h$.
\end{defn}

That been stated, it can be shown that theorem \ref{thm:Conjugacy} holds with ``conjugate'' replaced by ``semiconjugate''.

\chapter{Regular-Reversal quadratic maps}
\label{cha:Quadratic}

In this chapter the work of \cite{Solis2004} will be discussed, as the main precedent to the present work, where an application of bifurcation theory and chaos in discrete dynamical systems is analyzed to determine the conditions in which chaos arises for a family of unimodal maps, namely quadratic maps. If the asymptotic behavior and parametric dependency of quadratic maps is understood, the results can be generalized to more complex maps, starting to set a precedent for cubic maps and polynomial maps of higher order. The only original contribution in this chapter is given by some of the examples in the corresponding section below.

The main objective of this chapter is to analyze, for a given unimodal quadratic map, under what circumstances does the property of period doubling hold and when it disappears, which means that chaos is no longer a possibility, so as to be able to control the appearance (or not) of chaotic behavior by controlling the value of the parameter.

\section{Unimodal maps}

Consider a one-parameter family of real discrete dynamical systems of the form $x_{n+1}=g(x_{n},\lambda)$ with \emph{iteration function} $g(x_{n},\lambda)=g_{\lambda}(x_{n})$, $g:J\times I\rightarrow\mathbb{R}$, $I$, $J$ intervals, and $g$ smooth on $x$ and $\lambda$.

\begin{defn}[Unimodal Map]\index{unimodal map}
	An \emph{iteration function (map)} $f:I\subset\mathbb{R}\rightarrow I$, with $I$ an interval, is called \emph{unimodal} if it is smooth\footnote{This requirement follows from the computability of the Schwarzian derivative.} (at least $C^{3}(I)$) and with a unique maximal point. By extension, we call a family of maps unimodal if every member of the family is unimodal.
\end{defn}

In the following, let $g(x_{n},\lambda)$ be unimodal and with a \emph{fixed point} $x=x_{p}(\lambda)$.

\begin{defn}[Eigenvalue function]
	\label{def:eigfunction}
	The \emph{eigenvalue function} $\phi:\mathbb{R}\rightarrow\mathbb{R}$ corresponding to the fixed point $x=x_{p}(\lambda)$ of the one-parameter family of maps $g(x_{n},\lambda)$ is
	\begin{equation}
		\label{eq:EigenvalueFunction}
		\phi(\lambda)=\frac{\partial g}{\partial x}(x_{p}(\lambda)),
	\end{equation}
	defined for the values of $\lambda$ for which the fixed point exists.
\end{defn}

That is, the eigenvalue function gives us the multiplier of a fixed point as a function of the parameter of the family of maps. The eigenvalue function for fixed $\lambda$ is simply the derivative of the map at the fixed point, i.e. the ``traditional'' multiplier. Notice that $\phi$ is continuous since $g$ is smooth. The fixed point $x_{p}(\lambda)$ is \emph{asymptotically stable} for the values of $\lambda$ for which $-1<\phi(\lambda)<1$, agreeing with our notion of a hyperbolic attracting fixed point.

\begin{defn}[Region of Type 1]
	The \emph{region of type 1} of $x_{p}(\lambda)$ is the set $\{(\lambda,\,\phi(\lambda))\vert-1<\phi(\lambda)<1\}\subset\mathbb{R}^{2}$.
\end{defn}

The region of type 1 of $x_{p}$ is the set of values of the parameter $\lambda$ for which $x_{p}$ is a stable fixed point, along with its corresponding values of the eigenvalue function $\phi$ (multipliers). Therefore, the region of type 1 of a fixed point is its \emph{stability region} as a function of the parameter $\lambda$. Since $g(x,\lambda)$ is smooth on both $\lambda$ and $x$, regions of type 1 consist of the countable union of connected subsets of $\mathbb{R}^2$. In general, we can define regions of higher type respect to the fixed points of the $k$-th iteration of $g$, i.e. the periodic points of prime period $k$.

\begin{defn}[Region of Type $k$]
	If the family of maps $x_{n+1}=g(x_{n},\lambda)$, not necessarily unimodal, has an isolated periodic point of prime period $k\geq1,$, $x_{p}^{k}(\lambda)$, then we can define the \emph{$k$-th eigenvalue function} $\phi_{k}:\mathbb{R}\rightarrow\mathbb{R}$ as
	\begin{equation}
		\phi_{k}(\lambda)=\frac{\partial g^{(k)}}{\partial x}(x_{p}^{k}(\lambda)).\label{eq:EigenvalueFunctionKth}
	\end{equation}
	
	Notice that $\frac{\partial g^{(k)}}{\partial x}$ refers to the \emph{first} derivative of the $k$-th iteration of $g$ and not the $k$-th derivative of $g$, as in standard calculus notation. The set $\{(\lambda,\phi_{k}(\lambda))\in\mathbb{R}^{2}\vert x_{p}^{k}(\lambda)\text{ is stable}\}$ is denoted as the \emph{region of type $k$ for $x_{p}^{k}$}.
\end{defn}

The region of type $k$ of the periodic point $x_{p}^{k}$ is simply the region of type 1 of the function $g^{(k)}$, of which $x_{p}^{k}$ is a fixed point. Moreover, by the chain rule,
\[\phi_{k}(\lambda)=\frac{\partial g^{(k)}}{\partial x}=g'_{\lambda}(x_{p}^{k})g'_{\lambda}(g_{\lambda}(x_{p}^{k}))g'_{\lambda}(g_{\lambda}^{(2)}(x_{p}^{k}))\cdots g'_{\lambda}(g_{\lambda}^{(k-1)}(x_{p}^{k}))\]
so that it suffices that the absolute value of the product of the multipliers along the orbit of a periodic point $x_{p}^{k}$ is less than one in order to guarantee the stability of $x_{p}^{k}$. Nonetheless, the existence of periodic points of period $k>1$ is not assured even for a unimodal map in general. Thereafter, the periodic point, $x=x_{p}^{k}(\lambda)$, when it exists, is stable in an interval of the parameter $\lambda$ with ends given by $\phi_{k}^{-1}(1)$ and $\phi_{k}^{(-1)}(-1)$. We suppose here that the function $\phi_{k}^{-1}$ exists, at least locally, i.e. $\phi'_{k}(\lambda)\neq0$ for all $\lambda$ in an interval around a value of interest $\lambda_0$ associated with the periodic point $x_p^k(\lambda_0)=x_0^k$, according to the Inverse Function Theorem \cite{Protter}. A simpler way of saying the latter is that $x_p^k$ will be attracting as long as $-1<\phi_k(x_p^k)<1$, but notice that depending on the form of the graph of $\phi_k$, the eigenvalue function may never leave this region therefore undefining $\phi_k^{-1}$ for $\lambda=\pm1$. Another way of looking at the region of $k$ is that the type $k$ of a given region is given by the order $k$ of the attracting $k$-cycle existing in that region.

Now, since a point $(\lambda,\phi(\lambda))\in\mathbb{R}^{2}$ can only belong to a unique and specific region of a given periodic point, we can state the following definition.

\begin{defn}[Regular and Reversal Maps]
	Let $\mathcal{A}\subset\mathbb{R}$ be an interval. A map $x_{n+1}=g(x_{n},\lambda)$ is called a \emph{regular map} (respectively, \emph{reversal}) in $\mathcal{A}$ if it has an isolated periodic point such that its associated eigenvalue function $\phi$ has the property that if $\lambda_{1},\,\lambda_{2}\in\mathcal{A}$, with $\lambda_{1}<\lambda_{2}$, then the type of the region containing the point $(\lambda_{2},\phi(\lambda_{2}))$ is \emph{higher} (respectively, \emph{lower}) or equal than the type of the region containing the point $(\lambda_{1},\phi(\lambda_{1}))$.
	\label{def:Regular&ReversalMaps}
\end{defn}

In a regular map, as the value of $\lambda$ increases, new attracting periodic points of progressively higher periods are created and, therefore, the type of the regions associated with these points equally rises (or at least does not decrease) progressively. On the other hand, in a \emph{reversal map}, as the value of the parameter $\lambda$ increases, higher order attracting periodic points disappear and only progressively lower order attracting periodic points remain, so that the type of the associated regions decreases correspondingly.

\begin{example}[The Logistic Map]
	\label{ex:logistic}
	The logistic map $f_{\lambda}(x)=\lambda x(1-x)$ is a classical example of a regular map, since it satisfies the definition \ref{def:Regular&ReversalMaps} in the interval $[0,\lambda_{\infty}]$, with $\lambda_{\infty}\approx3.570$. Its eigenvalue function is
	\[\phi(\lambda)=\left.\lambda(1-2\lambda x)\right|_{x=x_{p}(\lambda)},\]
	for the nonzero fixed point $x_p(\lambda)$. In table \ref{tab:BifurcationsLogistic} some type $k$ regions are shown for the logistic map.
	
	\begin{table}
		\begin{tabular}{|c|c|}
			\hline
			\textbf{Type of the region ($k$)}  & \textbf{Interval {[}$(\lambda_{k-1},\lambda_{k})${]}} \tabularnewline
			\hline
			1  & (0, 3) \tabularnewline
			\hline
			2  & (3, 3.449489...) \tabularnewline
			\hline
			3  & (3.449489..., 3.544090...) \tabularnewline
			\hline
			4  & (3.544090..., 3.564407...) \tabularnewline
			\hline
			5  & (3.564407..., 3.568759...) \tabularnewline
			\hline
			6  & (3.568759..., 3.569692...) \tabularnewline
			\hline
			7  & (3.569692..., 3.569891...) \tabularnewline
			\hline
		\end{tabular}
		\caption{Table of regions of type $k$, $k=1,2,...,7$, for the logistic map. The logistic map is a regular map (see definition \ref{def:Regular&ReversalMaps}).
			Data reproduced from \cite[Table 1.1, p.41]{Elaydi}}
		\label{tab:BifurcationsLogistic}
	\end{table}
\end{example}

\begin{example}[The Exponential Map]
	Consider the one-parameter family of maps $E_{\lambda}(x)=\lambda e^x$, with $\lambda<0$. In figure \ref{fig:reversemap1} are shown three important cases of the graphs of the family$E_{\lambda}$. When $\lambda<-e$ the map has a repelling fixed point because of theorem \ref{thm:attractor-repellor} (see figure \ref{fig:reversemap1}c); when $\lambda=-e$, $E_{\lambda}(-1)=-1$ and $E'_{\lambda}(-1)=-1\neq 1$, so that -1 is a non-hyperbolic fixed point and, by theorem \ref{thm:fixedp}, there is an interval around $\lambda=-e$ in which this fixed point exists and is unique; and when $\lambda>-e$ the fixed point is attracting also because of theorem \ref{thm:attractor-repellor}. Therefore the fixed point of $E_{\lambda}$ undergoes a change in stability from unstable to stable as the value of $\lambda$ increases from $\lambda<-e$ to $\lambda>e$, with the \emph{bifurcation value} being exactly $\lambda=-e$. Also, it is easy to see that $E^2_{\lambda}$ is an increasing function that is concave upward if $E_{\lambda}(x)
	<-1$ and concave downward if $E_{\lambda}(x)>-1$ (see figure \ref{fig:reversemap2}). Thus $E^2_{\lambda}$ has two fixed points at the points $q_1$ and $q_2$ shown in figure \ref{fig:reversemap2} when $\lambda<-e$ and they disappear when $\lambda$ increases to $-e$. Since we know that $E_{\lambda}$ has a single fixed point, these must in fact be periodic points of period 2. Note that, as these period 2 points lose their ``attractiveness'', the fixed point acquires it, so that there is an ``exchange of stability''. Also note that, at $\lambda=-e$,  $\frac{\partial}{\partial\lambda}\left(E^2_{\lambda}\right)'(-1)\neq0$, so that we can apply theorem \ref{thm:perioddoubling} formally prove that a period doubling bifurcation effectively takes place. The bifurcation diagram of this example is shown in figure \ref{fig:reversemap3}, where it is easy to see that it satisfies the definition of a reversal map.
	
	\begin{figure}
		\centering
		\includegraphics[width=0.5\columnwidth]{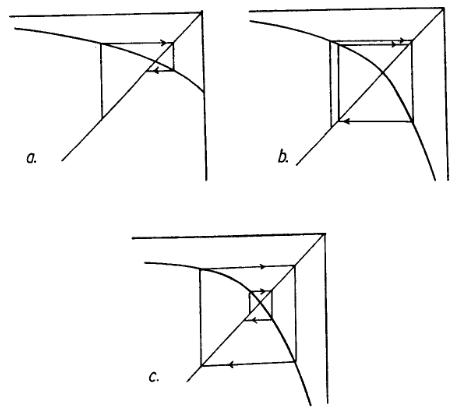}
		\caption{The graphs of the one-parameter family of maps $E_{\lambda}(x)=\lambda e^x$ for (a) $-e<\lambda<0$, (b) $\lambda=-e$, and (c) $\lambda<-e$. Reproduced from \cite{DevaneyIntroCDD}.}
		\label{fig:reversemap1}
	\end{figure}
	
	\begin{figure}
		\centering
		\includegraphics[width=0.5\columnwidth]{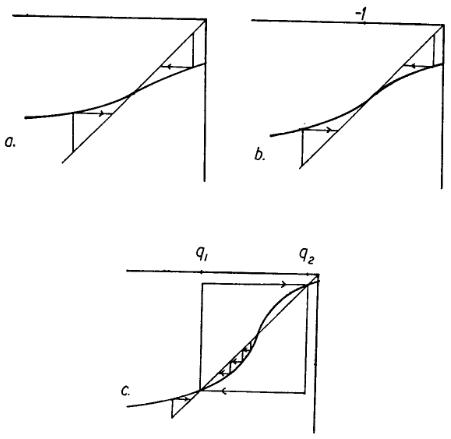}
		\caption{The graphs of $E^2_{\lambda}(x)$ for (a) $-e<\lambda<0$, (b) $\lambda=-e$, and (c) $\lambda<-e$. Reproduced from \cite{DevaneyIntroCDD}.}
		\label{fig:reversemap2}
	\end{figure}
	
	\begin{figure}
		\centering
		\includegraphics[width=0.5\columnwidth]{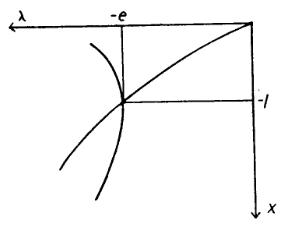}
		\caption{The bifurcation diagram of $E_{\lambda}(x)=\lambda e^x$. Notice that the ``vertical'' curve is the curve of period 2 attracting points. Reproduced from \cite{DevaneyIntroCDD}.}
		\label{fig:reversemap3}
	\end{figure}
\end{example}

\begin{defn}[Regular-Reversal Map]
	\label{def:regular-reversal}
	Let $\mathcal{A}$ be a nonempty interval in $\mathbb{R}$. A map $x_{n+1}=g(x_{n},\lambda)$ is called a \emph{regular-reversal map} in $\mathcal{A}$ if the interval $\mathcal{A}$ can be decomposed in two nonempty disjoint intervals $A_{1}$ and $A_{2}$ such that the map is regular in $A_{1}$ and reversal in $A_{2}$.
\end{defn}

In a regular-reversal map, there is a value of the parameter $\lambda$, say $\lambda_{0}$, that divides the interval of the family of maps $x_{n+1}=g(x_{n},\lambda)$ in two, in one of which it is regular and another one in which it is reversal. A regular-reversal map is interesting because it shows exactly the parametric range where periodicity takes place. So far, it has been difficult to determine for which maps this phenomenon occurs. It is for this reason that precedents have focused only in quadratic polynomial maps \cite{Solis2004}, but also because quadratic polynomials give a complete family of unimodal maps with negative Schwarzian derivative and they represent the simplest example of unimodal maps. In this work, we will focus in higher order polynomial maps, particularly cubic, and we will be able to determine a simple way to create regular-reversal polynomial maps of any degrees. We can also define \emph{reversal-regular maps} in a similar way.

\section{Quadratic maps}

Consider the family of \emph{quadratic discrete dynamical systems} given by

\begin{equation}
	y_{n+1}=\alpha(\lambda)y_{n}^{2}+(\beta(\lambda)+1)y_{n}+\gamma(\lambda),
	\label{eq:QuadraticFamilyGeneral}
\end{equation}

where $\alpha$, $\beta$ and $\gamma$ are smooth functions of the parameter $\lambda$ and $\alpha(\lambda)\neq0$ for all $\lambda$.

The fixed points of this system are obtained by solving the general quadratic equation
\[\alpha(\lambda)y^{2}+\beta(\lambda)y+\gamma(\lambda)=0\]
and they are given by the general quadratic formula
\[y_{0}=(2\alpha)^{-1}\left(-\beta+\sqrt{\beta^{2}-4\alpha\gamma}\right),\quad y_{1}=(2\alpha)^{-1}\left(-\beta-\sqrt{\beta^{2}-4\alpha\gamma}\right).\]

We will assume that $\beta^{2}-4\alpha\gamma>0$ in order to have two distinct real fixed points. We can simplify the system introducing the change of variable
\[x_{n}=Ay_{n}+B\]
where $A=\alpha^{-1}$ and $B=(2\alpha)^{-1}\left(-\beta-\sqrt{\beta^{2}-4\alpha\gamma}\right)=y_{1}$.
Note that $y=B$ is a fixed point of the system \eqref{eq:QuadraticFamilyGeneral}.
We obtain the following modified discrete dynamical system \cite{Solis2004}

\begin{equation}
	x_{n+1}=g(x_{n})\equiv x_{n}^{2}-b(\lambda)x_{n}=x_{n}(x_{n}-b(\lambda))
	\label{eq:QuadraticFamilyModified}
\end{equation}

where $b(\lambda)=-\beta(\lambda)-2\alpha(\lambda)B$. The map $x_{n+1}=g(x_{n})$ is unimodal. Without loss of generality, we will restrict to the case of positive and continuous $b(\lambda)$ for $\lambda>0$.

Under the above transformation, the fixed points of the new system are
\begin{equation}
	x_{0}=0,\quad x_{1}=1+b(\lambda)
	\label{eq:FixedPointsModifiedQuadratic}
\end{equation}

so that the eigenvalue function for each of them is given by
\[\phi(x_{0})=\frac{\partial g}{\partial x}(x_{0})=-b(\lambda),\quad\phi(x_{1})=\frac{\partial g}{\partial x}(x_{1})=2+b(\lambda).\]

The corresponding regions of type 1 are
\begin{eqnarray*}
	x_{0} & : & \{(\lambda,b(\lambda))\vert-1<b(\lambda)<1\}\\
	x_{1} & : & \{(\lambda,b(\lambda))\vert-3<b(\lambda)<-1\}.
\end{eqnarray*}

There are also two periodic points of period two, which we can calculate by solving
\[x=g^{(2)}(x)=g(g(x))=x^{4}-2bx^{3}+b(b-1)x^{2}+b^{2}x.\]
We can factor $x(x-(b+1))$ from the expression since it corresponds to the fixed points (order 1) and we then solve for the roots of
\[x^{2}+(1-b)x+(1-b)=0;\]
and, again by the general quadratic formula, we have
\begin{eqnarray*}
	x_{1}^{2} & = & (2)^{-1}\left(b-1+\sqrt{b^{2}+2b-3}\right)\\
	x_{2}^{2} & = & (2)^{-1}\left(b-1-\sqrt{b^{2}+2b-3}\right)
\end{eqnarray*}
with eigenvalue function
\[\phi_{2}(\lambda)=\frac{\partial g^{(2)}}{\partial x}(x)=\left.4x^{3}-6bx^{2}+2b(b-1)x+b^{2}\right|_{x=x_{j}^{2}},\quad j=1,2\]
and, therefore, corresponding type 2 region given by
\[\{(\lambda,b(\lambda))\vert1<b(\lambda)<\sqrt{6}-1\approx1.45\}.\]

Moreover, there are $2^{N}$ periodic points of period $2^{N}$ with region of type $N$ given by \[\{(\lambda,b(\lambda))\vert b_{N-1}<b(\lambda)<b_{N}\}\]
where the sequence $\{b_{k}\}_{k=0}^{\infty}$ is convergent, and converges to the limiting value $b_{\infty}=1.569945\cdots$ \cite{Solis2004}. The sequence $\{b_{k}\}_{k=0}^{\infty}$ is related to the one described by \cite{Feigenbaum} in \cite{Feigenbaum} and converges exponentially. Table \ref{tab:RegionsTypeK} summarizes this results.

\begin{table}
	\begin{tabular}{|c|c|}
		\hline
		Type of the region ($k$)  & Interval {[}$(b_{k},b_{k+1})${]} \tabularnewline
		\hline
		1 ($x_{0}$)  & (-3, -1) \tabularnewline
		\hline
		1 ($x_{1}$)  & (-1, 1) \tabularnewline
		\hline
		2 ($x_{1}^{2},x_{2}^{2}$)  & (1, $\sqrt{6}-1\approx1.45)$ \tabularnewline
		\hline
		$\vdots$  & $\vdots$ \tabularnewline
		\hline
		$\infty$  & ($b_{\infty}=1.56994567...,\infty$) \tabularnewline
		\hline
	\end{tabular}
	\caption{Regions of type $k$ for the quadratic map $x_{n+1}=x_{n}^{2}-b(\lambda)x_{n}$.}
	\label{tab:RegionsTypeK}
\end{table}

The values of the original Feigenbaum sequence, $\{\lambda_{k}\}_{k=0}^{\infty}$, correspond to $\lambda$ values of the points of intersection of the function $b(\lambda)$ with the the values of the sequence $\{b_{k}\}_{k=0}^{\infty}$. Stated otherwise, the  sequence $\{b_{k}\}_{k=0}^{\infty}$ results from evaluating the Feigenbaum sequence $\{\lambda_{k}\}_{k=0}^{\infty}$ in the function $b(\lambda)$. When the function $b(\lambda)$ is the identity function, we can recover the original Feigenbaum's sequence.

\emph{Feigenbaum's sequence}, $\{\lambda_{k}\}_{k=0}^{\infty}$, describes the values of the parameter $\lambda$ for which the \emph{period doubling bifurcation} takes place in a unimodal map. The sequence converges exponentially. The values $\{\lambda_{k}\}_{k=0}^{\infty}$ for a given iteration function $g$ --of the following form-- are given implicitly by the definition of  \cite{Feigenbaum} in its original paper \cite{Feigenbaum}, for a map $f$ as
\[x_{p}^{2^{k}}=g^{2^{k}}(x_{p}^{2^{k}})=(\lambda_{k}f)^{2^{k}}(x_{p}^{2^{k}}),\quad\lambda\in\mathbb{N},\]
for the periodic point of period $2^{k}$, $x_{p}^{2^{k}}$, where we must solve for $\lambda_{k}$. In table \ref{tab:feigenbaum} the values of a Feigenbaum sequence are shown for a specific iteration function.

A simple way to estimate the values of Feigenbaum sequences is starting from Feigenbaum's constant, $\delta=4.669201609...$, and the first calculated values of the sequence $\{\lambda_{k}\}_{k\in\mathbb{N}}$, through the approximation
\begin{equation}
	\lambda_{k+1}=\lambda_{k}+\frac{\lambda_{k}-\lambda_{k-1}}{\delta}.
	\label{eq:Bapprox}
\end{equation}
This estimate is used as a predictor for the next value of the sequence and tends to be more precise as $k\rightarrow\infty$.

\begin{figure}
	\centering
	\includegraphics[width=0.66\columnwidth]{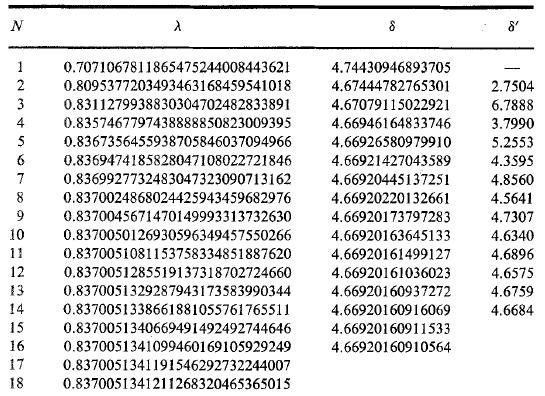}
	\caption{Feigenbaum's sequence for $f(x)=1-2x^{2}$. $N$ denotes the cycle of order $2^{N}$. The parameter is denoted as $\lambda$. $\delta_{N}=(\lambda_{N+1}-\lambda_{N})/(\lambda_{N+2}-\lambda_{N+1})$
		and $\delta_{N}'=(\delta_{N+1}-\delta_{N})/(\delta_{N+2}-\delta_{N+1})$. Reproduced from \cite{Feigenbaum}}
	\label{tab:feigenbaum}
\end{figure}

\section{Types of periodic points and chaos}
\label{sec:types-perpt-chaos}

By the known properties of the sequence $\{b_{k}\}_{k=0}^{\infty}$ and Sarkovskii's theorem, we can state the following propositions \cite{Solis2004}:

\begin{prop}
	\label{prop:bk-perpt}
	If the function $b(\lambda)$ is bounded from above by $b_{N}$, the map $x_{n+1}=g(x_{n})\equiv x_{n}^{2}-b(\lambda)x_{n}$ can only have periodic points of period $2^{m}$ with $m<N$. Moreover, if the bound is given by $b_{\infty}$, the system is not chaotic and it can only have periodic points of periods of powers of two.
\end{prop}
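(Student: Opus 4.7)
The plan is to combine the explicit period-doubling structure captured by the sequence $\{b_k\}_{k=0}^{\infty}$ with Sarkovskii's theorem, using the former to enumerate the periodic points that \emph{are} present and the latter to exclude periodic points of any other prime periods. Recall that $b_k$ has been constructed so that at $b=b_k$ the attracting cycle of prime period $2^k$ loses stability (its multiplier crosses $-1$) and a new attracting cycle of prime period $2^{k+1}$ is born by a period-doubling bifurcation, in the sense of Theorem~\ref{thm:perioddoubling}. Thus as $b$ crosses $b_0<b_1<\cdots<b_{N-1}$ from below, successive cycles of prime periods $2^0,2^1,\ldots,2^{N-1}$ appear, with earlier ones persisting as repelling periodic points.

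For the first assertion, I would argue by induction on $k$ that for $b(\lambda)\in(b_{k-1},b_k)$ the set of periodic points of $g$ consists exactly of orbits of prime periods $2^m$ with $m\le k$. The inductive step combines two ingredients: (i) the period-doubling theorem, which produces the new prime-period-$2^{k+1}$ cycle at $b=b_k$ while leaving the other prime-period cycles intact; and (ii) Sarkovskii's theorem applied contrapositively to exclude any non-power-of-two prime period $q$ in $(b_{k-1},b_k)$, since $q\triangleright 2^{k+1}$ in Sarkovskii's ordering would force a prime-period-$2^{k+1}$ orbit to exist, contradicting the hypothesis that $b<b_k$. Granting the induction, the bound $b(\lambda)<b_N$ places us before the $N$-th period doubling has taken place, so only prime periods $2^m$ with $m<N$ can occur.

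For the second assertion, taking $b(\lambda)\le b_\infty$ and applying the first assertion for every $N$ with $b_N<b_\infty$ shows that every occurring prime period is a power of two. To conclude non-chaoticity in the sense of Devaney, it suffices to rule out denseness of periodic points on any invariant interval: since period $3$ (the greatest element of Sarkovskii's ordering) is absent, topological transitivity on a nondegenerate invariant interval would, by Sarkovskii's theorem, force every natural number to appear as a period, yielding a contradiction. Equivalently, using the remark from section~\ref{sec:Chaos} that for continuous interval maps topological transitivity implies denseness of periodic points, the absence of period $3$ precludes transitivity and hence chaos.

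The step I expect to be the main obstacle is the exclusion part of the inductive claim in the first assertion, namely that no periodic orbit of non-power-of-two prime period is created by some other mechanism (for instance a saddle-node bifurcation of a period-$q$ cycle with $q$ odd) anywhere in the range $b<b_\infty$. The period-doubling theorem only certifies that the $2^{k+1}$ cycle is born at $b_k$; it does not by itself preclude other bifurcations. Closing this gap is a standard fact for the quadratic family that follows from its negative Schwarzian derivative together with Singer's Theorem (Theorem~\ref{thm:Singer}) and the monotonicity of the family of iterates in $b$, and in the proof I would invoke this well-known property of unimodal negative-Schwarzian families rather than reprove it from scratch.
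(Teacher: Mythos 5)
The paper does not actually prove this proposition: it is stated as imported from \cite{Solis2004}, justified in one line ``by the known properties of the sequence $\{b_k\}_{k=0}^{\infty}$ and Sarkovskii's theorem.'' Your reconstruction follows exactly that intended route (period-doubling structure of $\{b_k\}$ plus Sarkovskii to exclude other periods), so in spirit you are aligned with the paper; you are also more honest than the paper in flagging where the real work lies. That said, two steps as you have written them do not go through. First, the exclusion step of your induction is circular: to rule out a non-power-of-two prime period $q$ for $b\in(b_{k-1},b_k)$ you argue that $q\triangleright 2^{k+1}$ would force a period-$2^{k+1}$ orbit, ``contradicting the hypothesis that $b<b_k$'' --- but the absence of a period-$2^{k+1}$ orbit for $b<b_k$ is precisely the inductive conclusion you are trying to establish. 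The entire burden is carried by the ``standard fact'' you defer to at the end (that no saddle-node or other bifurcation creates new cycles before $b_\infty$); Singer's Theorem as stated in the paper only bounds the number of \emph{attracting} orbits by $n+2$ and does not by itself exclude the birth of repelling odd-period cycles, so the gap you identify is genuinely the whole content of the first assertion, and neither the paper nor your proposal closes it.

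Second, the non-chaoticity argument misattributes to Sarkovskii's theorem the claim that topological transitivity on an interval forces all periods to occur. Sarkovskii's theorem only propagates periods downward from an existing period; it says nothing about transitivity. The fact that a transitive interval map must have a periodic point of non-power-of-two period (equivalently, that maps whose periods are all powers of two have zero entropy and cannot be transitive) is a separate, nontrivial result not contained in the tools the paper provides. The cleaner route available within the paper is the one you mention parenthetically: by \cite{VellekoopBerglund} and \cite{Banks92}, transitivity on an interval already implies chaos, so it suffices to show transitivity fails --- but that still requires the external fact connecting ``all periods are powers of two'' to ``not transitive.'' Since the paper itself offers no proof, your attempt is a reasonable and appropriately self-critical reconstruction, but as a standalone proof it has these two genuine gaps.
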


It is worth remarking that there are unimodal maps that have no periodic points with periods greater than one, in which case the bifurcation diagram consists of a single branch given by the fixed point.

\begin{prop}
	\label{prop:not-chaotic}
	Suppose that the function $b(\lambda)$ is bounded. If the supreme of the function $b(\lambda)$ lies within the interval $(b_{n},\, b_{n+1})$, then the map $x_{n+1}=x_{n}^{2}-b(\lambda)x_{n}$ only has periodic points of period $2^{k}$ with $k\in\{1,\,2,\,...,\, n\}$ and therefore, it is \emph{not} chaotic.
\end{prop}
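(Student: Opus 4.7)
The plan is to derive the conclusion as a direct corollary of the preceding proposition combined with the density-of-periodic-points clause in Devaney's definition of chaos. Set $B=\sup_{\lambda} b(\lambda)$. By hypothesis $B<b_{n+1}$, so $b(\lambda)<b_{n+1}$ for every admissible $\lambda$. Applying Proposition \ref{prop:bk-perpt} with $N=n+1$ then immediately yields that every periodic point of $g$ must have prime period $2^{m}$ with $m\le n$, which is the first assertion of the proposition. (The fact that the bound is strict is crucial: if $b$ could equal $b_{n+1}$, the boundary region of type $n+1$ could be entered.)

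For the second assertion, I would argue that the set of periodic points is finite, therefore not dense, therefore incompatible with Devaney chaos. For each admissible $m\in\{0,1,\ldots,n\}$, the periodic points of period $2^{m}$ are the real solutions of the polynomial equation $g^{(2^{m})}(x)-x=0$; since $g$ has degree $2$, this equation has degree $2^{2^{m}}$ and hence at most that many real roots. Summing over the finite index set $\{0,1,\ldots,n\}$ yields a \emph{finite} total number of periodic points of $g$. A finite subset of any nondegenerate subinterval of $\mathbb{R}$ cannot be dense, so the second requirement in the definition of chaos fails and $g$ is not chaotic.

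There is no real technical obstacle here: the statement falls out mechanically once Proposition \ref{prop:bk-perpt} is in hand, supplemented only by the elementary cardinality argument above. The one subtlety worth flagging is the implicit choice of ambient space on which chaos is to be ruled out; one should note that $g$ acts on an interval of $\mathbb{R}$ (indeed an invariant interval around the fixed points $x_{0}=0$ and $x_{1}=1+b(\lambda)$), which contains uncountably many points, so that the finiteness of the periodic set genuinely precludes density there. Alternatively, one could invoke the consequence of Sarkovskii's theorem quoted in Section \ref{sec:Sarkovskii}, which states that if a continuous map has only finitely many periodic points then all their periods must be powers of $2$; our finer conclusion of \emph{finitely many} powers of $2$ is consistent with, and in fact strictly stronger than, that consequence.
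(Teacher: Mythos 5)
Your argument is correct, and it is worth noting that the paper itself gives no proof of this proposition at all: it is simply asserted, with the remark that it follows from ``the known properties of the sequence $\{b_k\}_{k=0}^{\infty}$ and Sarkovskii's theorem'' and a citation to \cite{Solis2004}. Your write-up therefore supplies the missing details along essentially the route the paper gestures at. The first half is exactly an application of Proposition \ref{prop:bk-perpt} with $N=n+1$, and you are right to flag that the strictness $\sup_\lambda b(\lambda)<b_{n+1}$ (automatic since the supremum lies in the \emph{open} interval $(b_n,b_{n+1})$) is what makes that proposition applicable. For the second half the paper's implicit route is the Sarkovskii-based observation quoted in Section \ref{sec:Sarkovskii} (finitely many periodic points forces all periods to be powers of $2$, and a chaotic interval map cannot have only finitely many periods); your cardinality argument --- each admissible period $2^m$ contributes at most $2^{2^m}$ roots of the nonzero polynomial $g^{(2^m)}(x)-x$, so the periodic set is finite and hence not dense in any nondegenerate interval, violating the density clause (and, a fortiori, transitivity) in Devaney's definition --- is more elementary and more self-contained, since it does not need Sarkovskii at all. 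Two trivial remarks: your conclusion actually allows $m=0$ (fixed points), whereas the proposition's statement writes $k\in\{1,\dots,n\}$; that discrepancy is in the paper's statement, not in your proof. And your parenthetical identification of the ambient interval is a worthwhile precaution that the paper never makes explicit.
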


In order for the system $x_{n+1}=x_{n}^{2}-b(\lambda)x_{n}$ to be a regular reversal map for $\lambda$ in some interval $\mathcal{A}=(\lambda_I,\,\lambda_F)$, it is necessary that the function $b(\lambda)$ has the following property: there exist $\lambda_{1}<\lambda_{2}<\lambda_{3}$ in $\mathcal{A}$ such that $b(\lambda_{1})<b(\lambda_{3})=b_{i}<b(\lambda_{2})$ for some $i\in\mathbb{N}$ where $b_{i}$ is the $i$-th element of the sequence $\{b_{k}\}_{k=0}^{\infty}$. It is relatively easy to construct functions $b(\lambda)$ with the aforementioned property, which means there is a large class of unimodal maps that are \emph{not} chaotic.

Beyond the value $b_{\infty}$, there are other regions that divide the positive quadrant of $\mathbb{R}^{2}$. The upper region of $\mathbb{R}^{2}$, that is, the region $\{(\lambda,b(\lambda))\vert b(\lambda)>b_{\infty}\}$, can be defined as the region of type $\infty$ and corresponds to divergent orbits. It is not clear how this region is subdivided and, in general, this is an open question.

\section{Quadratic examples}

The goal of the following examples is to show that regular reversal maps are a very broad family and that it is relatively simple to construct maps with specific --\emph{ad
	hoc}-- bifurcation diagrams  with or without chaos.

\begin{example}
	\label{eg:quadratic1} Let $b(\lambda)$ be a quadratic unimodal function given by
	\[b(\lambda)=\frac{4\alpha_{0}}{\beta_{0}^{2}}\lambda(\beta_{0}-\lambda),\]
	with $\alpha_{0},\,\beta_{0}>0$. Note that $\alpha_{0}$ is the maximum value and $\beta_{0}$ is a root of $b(\lambda)$ (the other root is $\lambda=0$). We are interested in knowing the periodic point structure of the map $x_{n+1}=x_{n}^{2}-b(\lambda)x_{n}$ as we vary the parameter $\alpha_{0}$. Fixing $\beta_{0}=2$, we then have the following cases:
	
	\begin{itemize}
		\item If $0<\alpha_{0}<b_{0}=1$, we have a bifurcation diagram consisting of a single straight line coincident with the $\lambda$ axis.
		\item If $1<\alpha_{0}<b_{2}=\sqrt{6}-1$, we obtain a regular reversal map with a bifurcation diagram consisting of a straight line, as the previous case, but with a closed asymmetrical loop homeomorphic to a circle (see figure \ref{fig:diagbif1-a1.2} on page \pageref{fig:diagbif1-a1.2}). The lack of symmetry stems from the fact that the upper branch of the loop corresponds the the periodic point $x_{1}^{2}$ and the lower branch to $x_{2}^{2}$. The diameter of the loop is calculated by finding the two intersections of $b(\lambda)$ with the line $b=b_{1}=1$ and it is equal to $2\beta_{0}\sqrt{\alpha_{0}^{2}-1}\alpha_{0}$. The graph of $b(\lambda)$ is shown in figure \ref{fig:blambda1-a1.2b2} at page \pageref{fig:blambda1-a1.2b2}.
		\item If $b_{1}=1<\alpha_{0}<\sqrt{6}-1=b_{2}$, two new loops appear: one that comes out of the lower branch of the previous case and another one from the upper branch, corresponding to the periodic points of period $2^{2}$ (see figure \ref{fig:diagbif1-a1.5}). The matching graph of $b(\lambda)$ is shown in figure \ref{fig:blambda1-a1.5b2}.
		\item If $b_{n}<\alpha_{0}<b_{n+1}$ for some $n\in\mathbb{N}$, the graph of $b(\lambda)$ looks like in figure \ref{fig:blambda1-a1.562b2} and we have a bifurcation diagram consisting of a straight line with a set of nested loops as shown in figure \ref{fig:diagbif1-a1.562}.
		\item If $\alpha_{0}=1.56994567...=b_{\infty}$, then we have branches that bifurcate to form infinite nested loops, as is shown in figure \ref{fig:diagbif1-a1.57} and whose graph of $b(\lambda)$ is shown in figure \ref{fig:blambda1-a1.57b2}.
	\end{itemize}
	
	\begin{figure}
		\centering
		\subfloat[$\alpha_{0}=1.2$ and $\beta_{0}=2$.]
		{
			\centering
			\includegraphics[width=0.5\columnwidth]{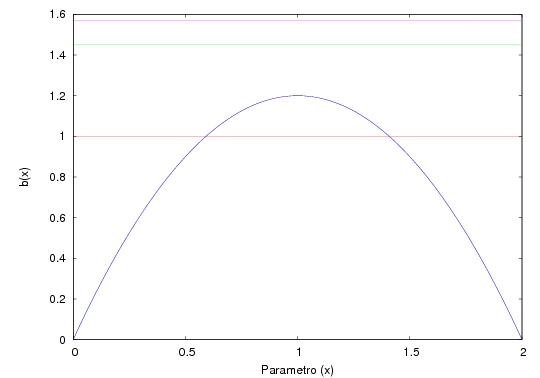}
			\label{fig:blambda1-a1.2b2}
		}
		\subfloat[$\alpha_{0}=1.5$ and $\beta_{0}=2$.]
		{
			\centering
			\includegraphics[width=0.5\columnwidth]{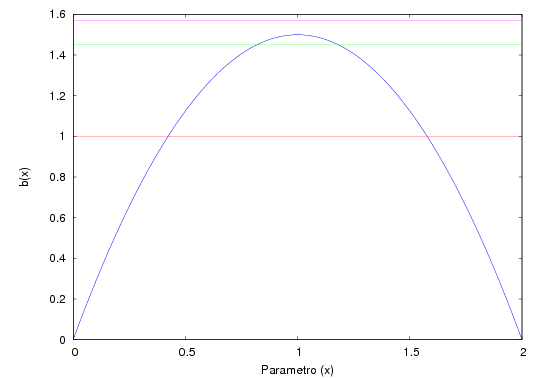}
			\label{fig:blambda1-a1.5b2}
		}\\
		\subfloat[$\alpha_{0}=1.562$ and $\beta_{0}=2$.]
		{
			\centering
			\includegraphics[width=0.5\columnwidth]{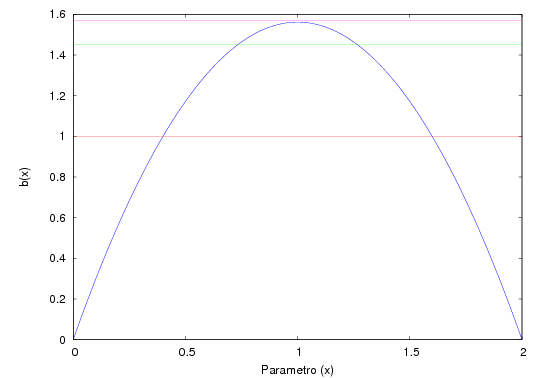}
			\label{fig:blambda1-a1.562b2}
		}
		\subfloat[$\alpha_{0}=b_{\infty}\approx1.57$ and $\beta_{0}=2$.]
		{
			\centering
			\includegraphics[width=0.5\columnwidth]{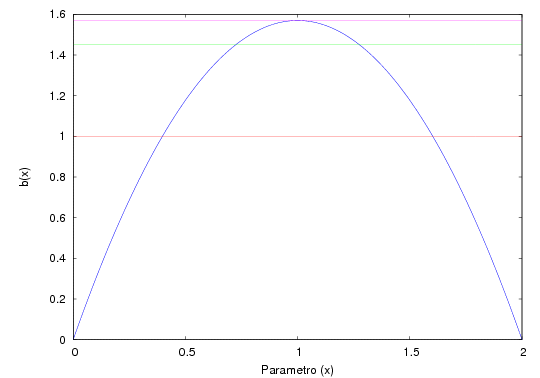}
			\label{fig:blambda1-a1.57b2}
		}
		\caption{$b(\lambda)$ functions of the quadratic regular reversal maps of example \ref{eg:quadratic1}. The horizontal lines correspond to the values of $b_{k}$, for $k=1,2$ and $\infty$.}
		\label{fig:blambda1}
	\end{figure}
	
	\begin{figure}
		\centering
		\subfloat[$\alpha_{0}=1.2$ y $\beta_{0}=2$.]
		{
			\centering
			\includegraphics[width=0.5\columnwidth]{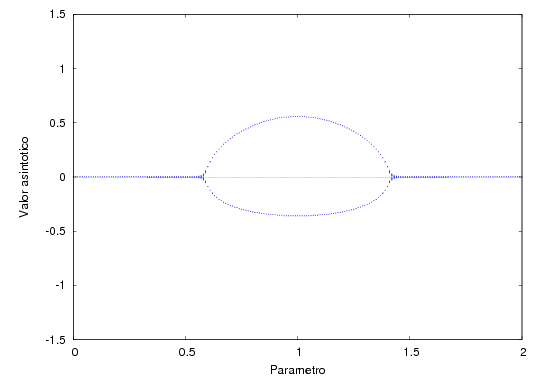}
			\label{fig:diagbif1-a1.2}
		}
		\subfloat[$b_{2}<\alpha_{0}=1.5<b_{3}$ and $\beta_{0}=2$.]
		{
			\centering
			\includegraphics[width=0.5\columnwidth]{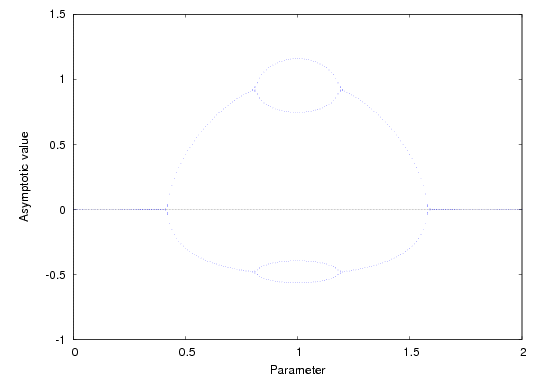}
			\label{fig:diagbif1-a1.5}
		}\\
		\subfloat[$b_{3}<\alpha_{0}=1.562<b_{4}$ y $\beta_{0}=2$.]
		{
			\centering
			\includegraphics[width=0.5\columnwidth]{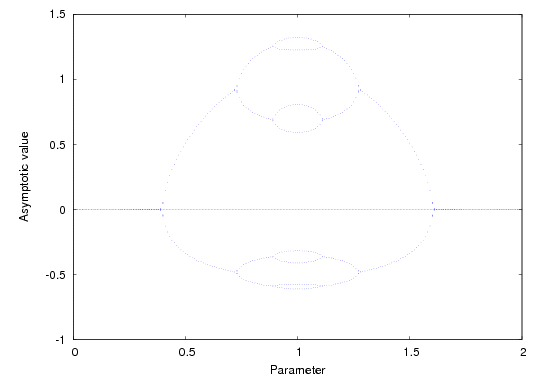}
			\label{fig:diagbif1-a1.562}
		}
		\subfloat[$\alpha_{0}=1.57\approx b_{\infty}$ and $\beta_{0}=2$.]
		{
			\centering
			\includegraphics[width=0.5\columnwidth]{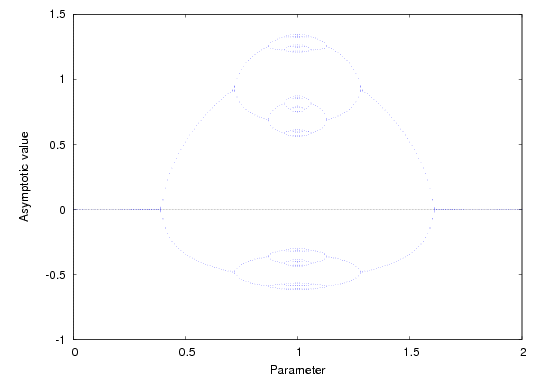}
			\label{fig:diagbif1-a1.57}
		}
		\caption{Bifurcation diagrams of the quadratic regular reversal maps of example \ref{eg:quadratic1}.}
		\label{fig:diagbif1}
	\end{figure}
	
\end{example}

\begin{example}
	\label{ex:quadratic1b}
	If we modify $b(\lambda)$ choosing it as periodic ``unimodal''\footnote{Formally, this cannot be a unimodal function, since it is a requirement for a unimodal function to have a single maximum} of the form
	\begin{equation}
		b(\lambda)=\alpha\sin{2\pi(\lambda+3/2)}+1,
		\label{eq:quadratic1b}
	\end{equation}
	with $1+\alpha<b_{\infty}$), the new bifurcation diagrams will be very similar to those of the last case, except that they will be periodic. In figure \ref{fig:blambda1b} at page \pageref{fig:blambda1b}, the graphs of $b(\lambda)$ are shown, and in figure \ref{fig:diagbif1b}, are the corresponding bifurcation diagrams, both for the corresponding parameter cases of figures \ref{fig:blambda1} and \ref{fig:diagbif1}, respectively.
	
	\begin{figure}
		\centering
		\subfloat[$\alpha=0.2$.]
		{
			\centering
			\includegraphics[width=0.5\columnwidth]{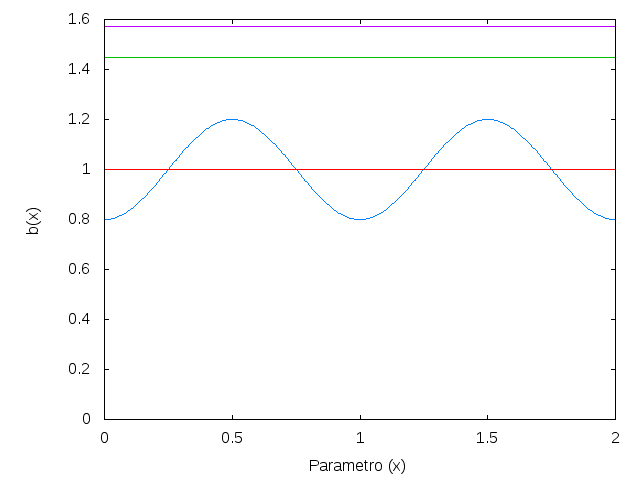}
			\label{fig:blambda1b-a1.2}
		}
		\subfloat[$\alpha=0.5$.]
		{
			\centering
			\includegraphics[width=0.5\columnwidth]{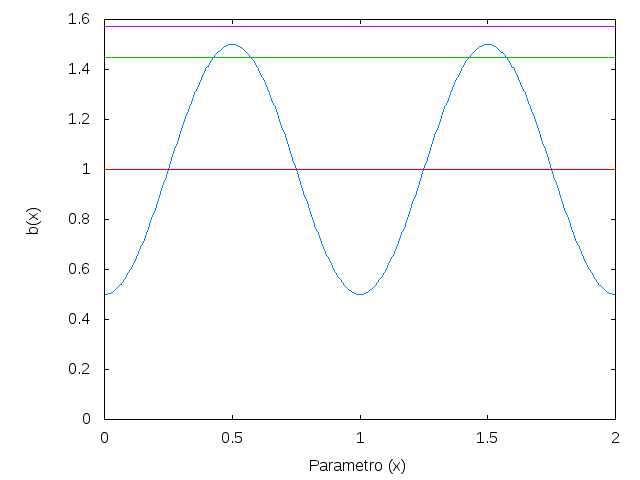}
			\label{fig:blambda1b-a0.5}
		}\\
		\subfloat[$\alpha=0.562$.]
		{
			\centering
			\includegraphics[width=0.5\columnwidth]{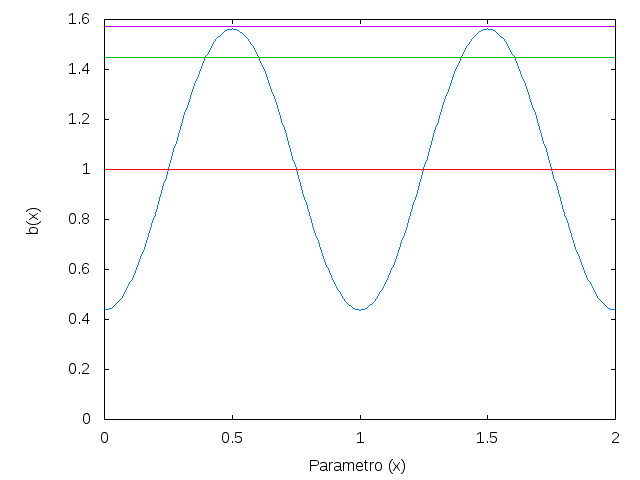}
			\label{fig:blambda1b-a1.562}
		}
		\subfloat[$\alpha=0.57$. ($1+\alpha\approx b_{\infty}$.)]
		{
			\centering
			\includegraphics[width=0.5\columnwidth]{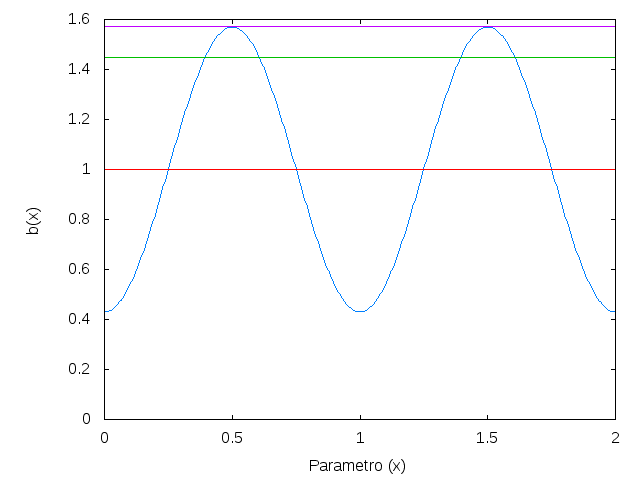}
			\label{fig:blambda1b-a0.57}
		}
		\caption{Functions $b(\lambda)$ of the regular reversal maps of example \ref{ex:quadratic1b} given by the equation \ref{ex:quadratic1b}. The horizontal lines correspond to the values of $b_{k}$, for $k=1,2$ and $\infty$.}
		\label{fig:blambda1b}
	\end{figure}
	
	\begin{figure}
		\centering
		\subfloat[$\alpha=0.2$.]
		{
			\centering
			\includegraphics[width=0.5\columnwidth]{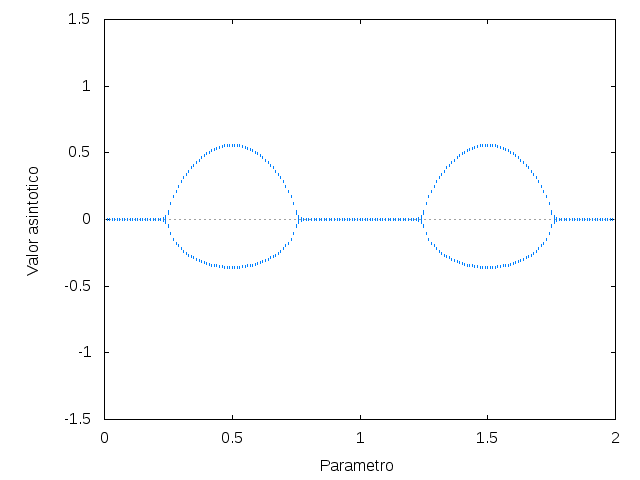}
			\label{fig:diagbif1b-a1.2}
		}
		\subfloat[$b_{2}<1+\alpha=1.5<b_{3}$.]
		{
			\centering
			\includegraphics[width=0.5\columnwidth]{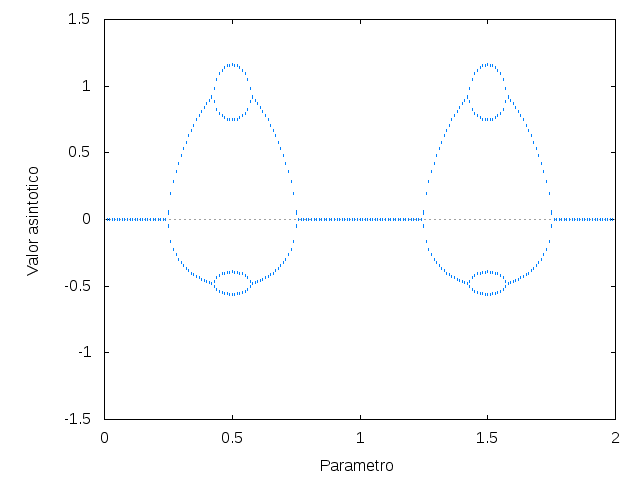}
			\label{fig:diagbif1b-a0.5}
		}\\
		\subfloat[$b_{3}<1+\alpha=1.562<b_{4}$.]
		{
			\centering
			\includegraphics[width=0.5\columnwidth]{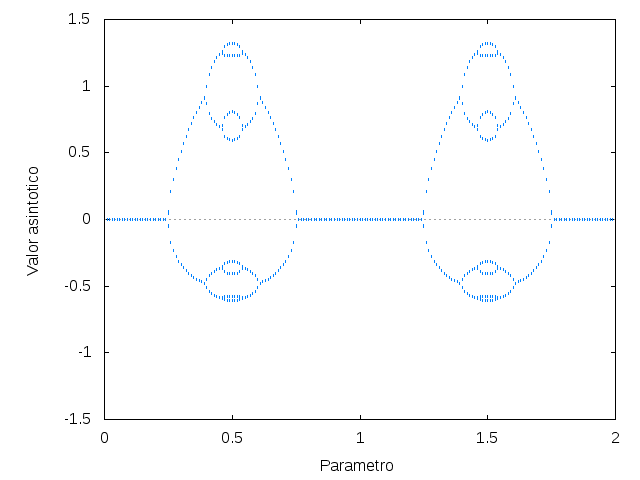}
			\label{fig:diagbif1b-a0.562}
		}
		\subfloat[$1+\alpha=1.57\approx b_{\infty}$.]
		{
			\centering
			\includegraphics[width=0.5\columnwidth]{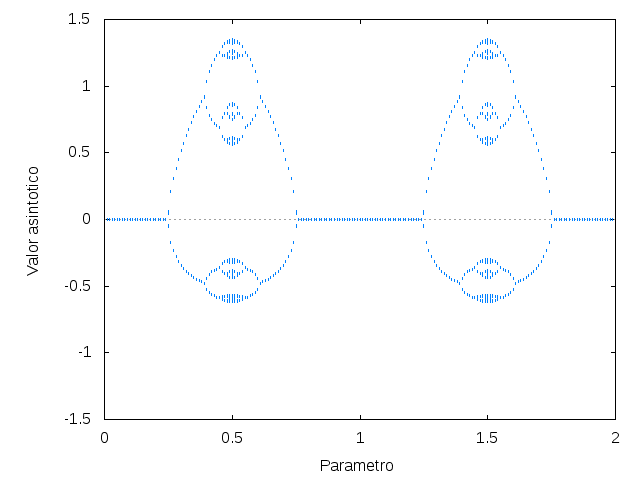}
			\label{fig:diagbif1b-a0.57}
		}
		\caption{Bifurcation the regular reversal maps of example \ref{ex:quadratic1b}.}
		\label{fig:diagbif1b}
	\end{figure}
	
\end{example}

\begin{example}
	\label{eg:mimod} If we introduce an asymmetry factor to the function \ref{eq:quadratic1b} we can achieve an asymmetrical bifurcation diagram with the different lobes having different amplitudes. One way to accomplish the above is defining $b(\lambda)$ as
	\begin{equation}
		b(\lambda)=\left(1-e^{-\lambda}\right)\sin^{2}\left(\pi\lambda/4\right)\{\alpha\sin\left[\pi\left(\lambda+3/2\right)\right]+4/5\}+4/5.
		\label{eq:mimod}
	\end{equation}
	
	Varying the parameter $\alpha$ we obtain graphs of $b(\lambda)$ like the ones shown in figure \ref{fig:blambda-mimod} at page \pageref{fig:blambda-mimod}, with its respective bifurcation diagrams, shown in figure \ref{fig:diagbif-mimod} at page \pageref{fig:diagbif-mimod}. For small values of $\alpha$ we have two lobes (or ``humps'') in the function $b(\lambda)$, with its local maxima under the value $b_{2}$, which translates into a bifurcation diagram with four asymmetrical lobes and of different amplitude (see figure \ref{fig:diagbif-mimod-a0.2} at page \pageref{fig:diagbif-mimod-a0.2}). However, due to asymmetry, rising the value of $\alpha$ to $0.4$ will only produce the appearance of nested loops in two of the right lobes of the bifurcation diagram (see figure \ref{fig:diagbif-mimod-a0.4}), which repeats itself as we continue to raise the value of $\alpha$ (see figure \ref{fig:diagbif-mimod-a0.5}).
	
	\begin{figure}
		\centering
		\subfloat[$\alpha=0.2$.]
		{
			\centering
			\includegraphics[width=0.5\columnwidth]{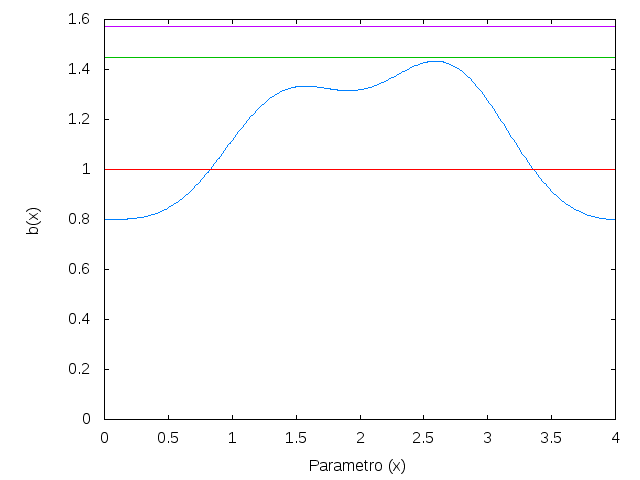}
			\label{fig:blambda-mimod-a0.2}
		}
		\subfloat[$\alpha=0.4$.]
		{
			\centering
			\includegraphics[width=0.5\columnwidth]{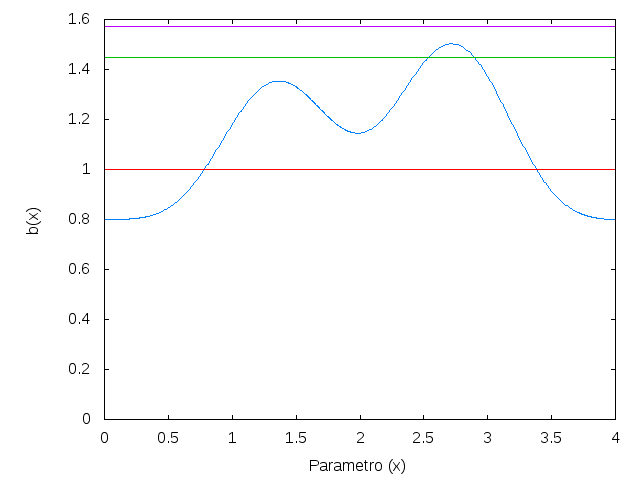}
			\label{fig:blambda-mimod-a0.4}
		}\\
		\subfloat[$\alpha=0.5$.]
		{
			\centering
			\includegraphics[width=0.5\columnwidth]{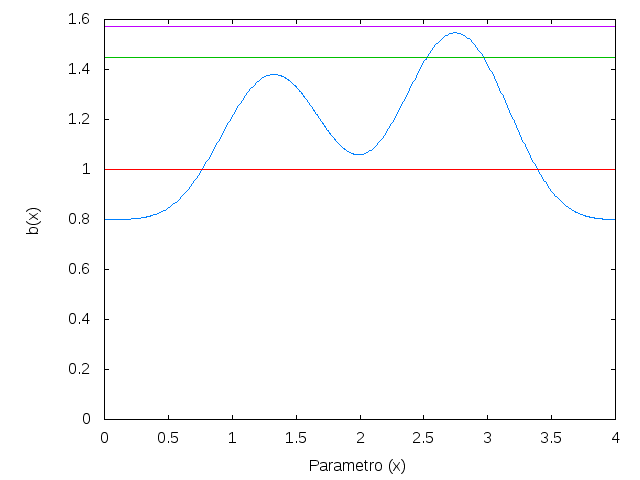}
			\label{fig:blambda-mimod-a0.5}
		}
		\caption{Graphs of $b(\lambda)$, given by equation \ref{eq:mimod}, for the regular reverse maps of example \ref{eq:mimod}.}
		\label{fig:blambda-mimod}
	\end{figure}
	
	\begin{figure}
		\centering
		\subfloat[$\alpha=0.2$.]
		{
			\centering
			\includegraphics[width=0.5\columnwidth]{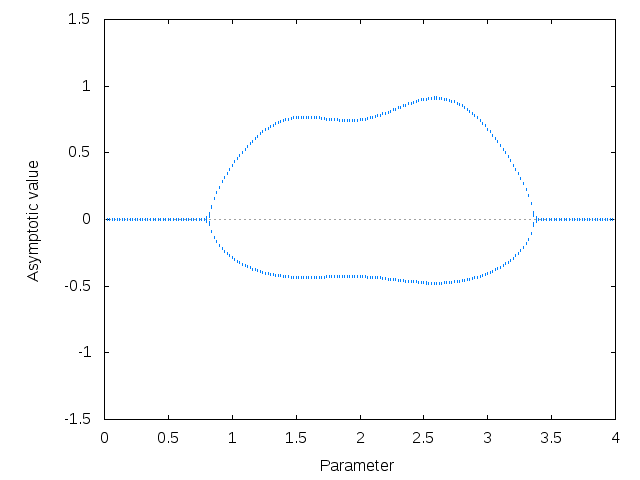}
			\label{fig:diagbif-mimod-a0.2}
		}
		\subfloat[$\alpha=0.4$.]
		{
			\centering
			\includegraphics[width=0.5\columnwidth]{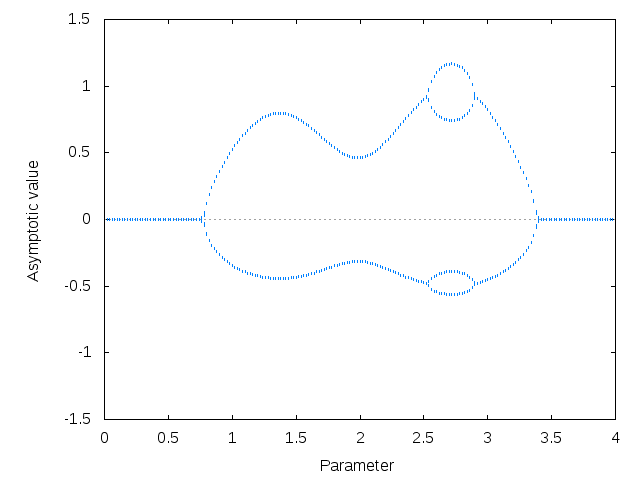}
			\label{fig:diagbif-mimod-a0.4}
		}\\
		\subfloat[$\alpha=0.5$.]
		{
			\centering
			\includegraphics[width=0.5\columnwidth]{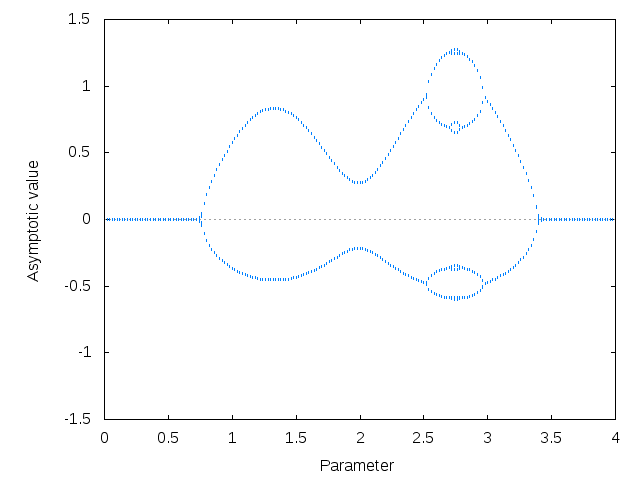}
			\label{fig:diagbif-mimod-a0.5}
		}
		\caption{Bifurcation diagrams of the regular reversal maps of example \ref{eg:mimod}.}
		\label{fig:diagbif-mimod}
	\end{figure}
	
\end{example}

\begin{example}
	\label{eg:example2}
	A similar effect to the one of the last example can be achieved by taking  $b(\lambda)$ as a polynomial of fourth grade in $\lambda$, with its coefficients chosen in such way that it is not \emph{unimodal}. For example,
	\begin{equation}
		b(\lambda)=-\left(\alpha\lambda^{4}-1.533\lambda^{3}+4.1083\lambda^{2}-4.166\lambda\right)
		\label{eq:example2}
	\end{equation}
	with $\alpha\in(0.18,0.192)$. The function $b(\lambda)$ has three local extremes (see figure \ref{fig:blambda2} at page \pageref{fig:blambda2}). In this case, the bifurcation diagram is shown for different values of the parameter in figure \ref{fig:diagbif2} at page \pageref{fig:diagbif2} where, for progressively higher values of $\alpha$ we progressively obtain lobes that stem from the branches corresponding to the periodic points. In this case the diagram is no symmetrical nor periodic because the function $b(\lambda)$ is neither.
	
	\begin{figure}
		\centering
		\subfloat[$\alpha=0.19166$.]
		{
			\centering
			\includegraphics[width=0.5\columnwidth]{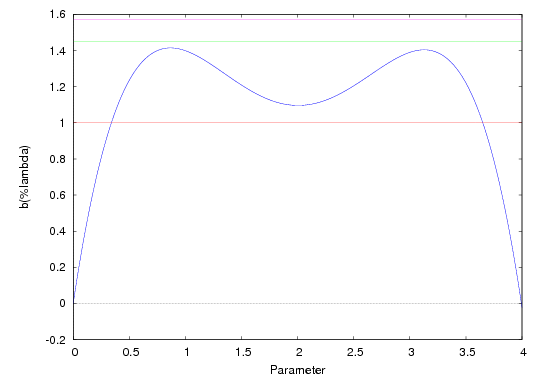}
			\label{fig:blambda2-a0.19166}
		}
		\subfloat[$\alpha=0.191$.]
		{
			\centering
			\includegraphics[width=0.5\columnwidth]{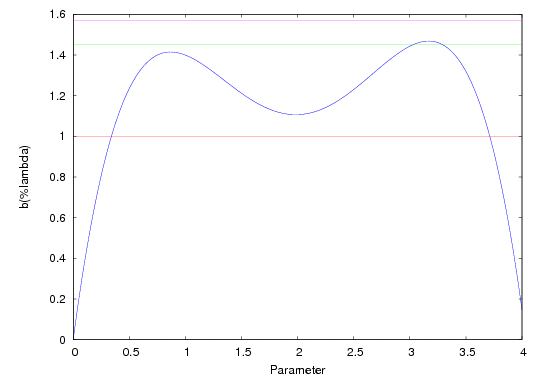}
			\label{fig:blambda2-a0.191}
		}\\
		\subfloat[$\alpha=0.19$.]
		{
			\centering
			\includegraphics[width=0.5\columnwidth]{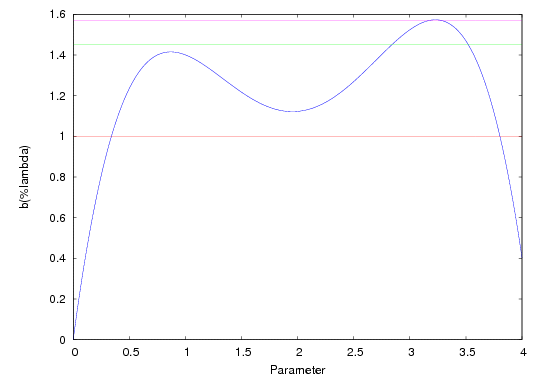}
			\label{fig:blambda2-a0.19}
		}
		\caption{Graphs of $b(\lambda)$, given by equation \ref{eq:example2}, for the quadratic regular reversal maps of example \ref{eg:example2}.}
		\label{fig:blambda2}
	\end{figure}
	
	\begin{figure}
		\centering
		\subfloat[$\alpha=0.19166$.]
		{
			\centering
			\includegraphics[width=0.5\columnwidth]{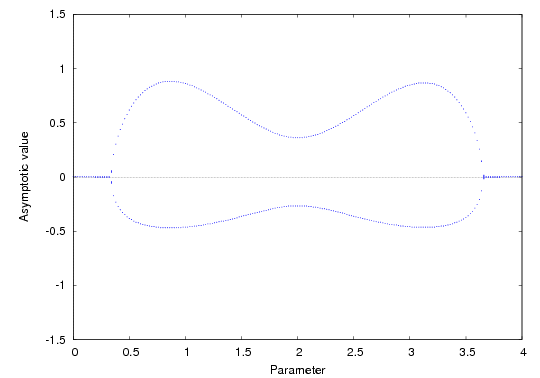}
			\label{fig:diagbif2-a0.19166}
		}
		\subfloat[$\alpha=0.191$.]
		{
			\centering
			\includegraphics[width=0.5\columnwidth]{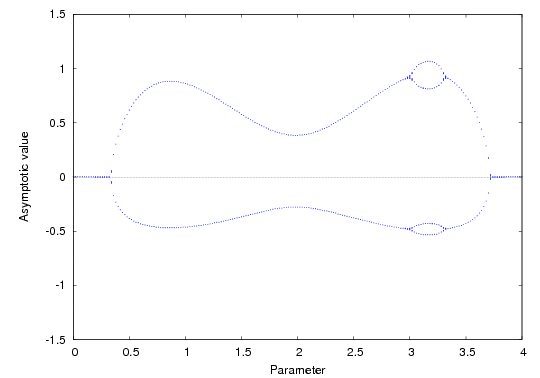}
			\label{fig:diagbif2-a0.191}
		}\\
		\subfloat[$\alpha=0.19$.]
		{
			\centering
			\includegraphics[width=0.5\columnwidth]{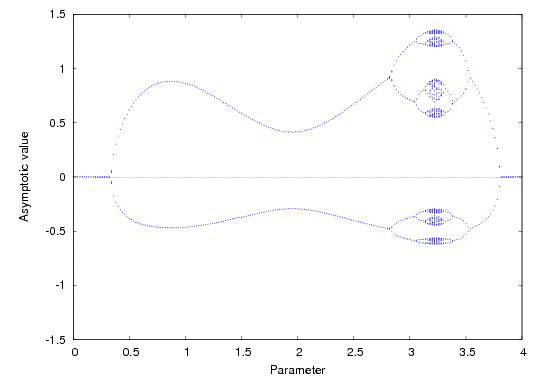}
			\label{fig:diagbif2-a0.19}
		}
		\caption{Bifurcation diagrams of the regular reversal maps of example \ref{eg:example2}.}
		\label{fig:diagbif2}
	\end{figure}
	
\end{example}

With the above examples we have shown we can design discrete dynamical systems in order to have specific bifurcation diagrams ``on demand'' with desired properties. The fundamental fact that allows us to do the latter is to note where the function $\phi(\lambda)$ intersects the values of the sequence $\{b_k\}_{k=1}^{\infty}$. The detailed form of $\phi(\lambda)$ is irrelevant as long as it does not cross a value of the sequence, for it only affects the specific form of the ``branches'' of the periodic points in the bifurcation diagram, but it does not alter the fundamental structure of the set of periodic points.

\chapter{Quadratic maps revisited}
\label{cha:QuadRev}

Now, we will restate and expand the work of \cite{Solis2004} in a way that will allow us to gain greater insight into the way we can control the appearance of chaos in a parametric family of quadratic maps and we will be able to generalize these results in a natural way for cubic maps and, later on, for general polynomial maps of $n$-th degree.

\section{General quadratic form}
\label{sub:GeneralQuadraticForm}
Consider the quadratic iteration function $f(y)$ as stated by the

\begin{defn}[General Quadratic Map]
	\label{def:GQM}
	The \emph{General Quadratic Map (GQM)} is defined by
	\begin{equation}
		f_2(y) := \alpha+(\beta+1)y+\gamma y^2 = y - P_{f_2}(y),
	\end{equation}
	where
	\begin{equation}
		P_{f_2}(y):=-(\alpha+\beta y+\gamma y^2)
	\end{equation}
	and $\alpha$, $\beta$ and $\gamma$ are real coefficients that depend upon the parameter $\lambda$ and are $C^1$. $P_{f_2}$ is called the Fixed Points Polynomial (of the general quadratic map).
\end{defn}

It is clear that any quadratic map can be put in this form. Although the minus sign in the definition of $P_{f_2}$ may now seem ``unnatural'', its function will be clear shortly\footnote{Check page \pageref{minus_sign} below.}.

Evidently, the zeros of $P_{f_2}$ are the fixed points of $f_2$ --thence the name of $P_{f_2}$-- since, if $y_0$ is a root of $P_{f_2}$, then $P_{f_2}(y_0)=0$ implies $f_2(y_0)=y_0$.

In the following, for simplicity, we will drop the subscript ``2'' from $f$ to denote the CQM. Since the derivatives of $f$ are
\begin{equation}
	\begin{aligned}
		f'(y) &=  2\gamma y + \beta + 1, \\
		f''(y) &= 2\gamma,
	\end{aligned}
\end{equation}
straightforward calculations lead us to
\begin{lem}
	$f$ has the following two roots and critical point, respectively,
	
	\begin{equation}
		\begin{aligned}
			y_0^r &= -\frac{\beta+1+\sqrt{(\beta+1)^2-4\alpha\gamma}}{2\gamma},\\
			y_1^r &= -\frac{\beta+1-\sqrt{(\beta+1)^2-4\alpha\gamma}}{2\gamma},\\
			y_c &= -\frac{\beta+1}{2\gamma},
		\end{aligned}
	\end{equation}
	with corresponding extreme value
	\[f_{ext}=\frac{4\alpha\gamma-(\beta+1)^2}{4\gamma}=\alpha-\gamma y_c.\]
	And, finally, it has the fixed points
	\begin{equation}
		\begin{aligned}
			y_0^* &= -\frac{\beta+\sqrt{{\beta}^{2}-4\,\alpha\,\gamma}}{2\,\gamma},\\
			y_1^* &= -\frac{\beta-\sqrt{{\beta}^{2}-4\,\alpha\,\gamma}-\beta}{2\,\gamma}.
		\end{aligned}
	\end{equation}
\end{lem}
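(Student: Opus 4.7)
The plan is to verify each of the three claims by direct computation, treating each as an independent application of elementary algebra, and in particular the quadratic formula (three times). Throughout we use that $\gamma \neq 0$, which is implicit in the definition of a general \emph{quadratic} map.

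First I would compute the roots of $f$ itself, i.e.\ the solutions of $f(y)=0$. Since
\[\gamma y^{2}+(\beta+1)y+\alpha = 0,\]
the quadratic formula immediately yields
\[y = \frac{-(\beta+1)\pm\sqrt{(\beta+1)^{2}-4\alpha\gamma}}{2\gamma},\]
which, after pulling a minus sign out of the numerator, gives the stated $y_{0}^{r}$ and $y_{1}^{r}$. Next, I would find the critical point by setting $f'(y)=2\gamma y+(\beta+1)=0$ and solving for $y$, giving $y_{c}=-(\beta+1)/(2\gamma)$. Substituting $y_{c}$ back into $f$ produces
\[f(y_{c}) = \alpha+(\beta+1)y_{c}+\gamma y_{c}^{2} = \alpha-\frac{(\beta+1)^{2}}{2\gamma}+\frac{(\beta+1)^{2}}{4\gamma} = \frac{4\alpha\gamma-(\beta+1)^{2}}{4\gamma},\]
which is $f_{ext}$; the equivalent expression $\alpha-\gamma y_{c}$ follows from $\gamma y_{c}^{2}=-(\beta+1)y_{c}/2\cdot$(substitute and simplify) or, more transparently, by writing $f(y_{c})=\alpha+y_{c}\bigl[(\beta+1)+\gamma y_{c}\bigr]=\alpha+y_{c}(\beta+1-\tfrac{\beta+1}{2})=\alpha-\gamma y_{c}$ after substituting $\gamma y_c = -(\beta+1)/2$.

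Finally, I would compute the fixed points from the condition $f(y)=y$, which by the definition of $P_{f_{2}}$ is equivalent to $P_{f_{2}}(y)=0$, namely
\[\gamma y^{2}+\beta y+\alpha = 0.\]
One more application of the quadratic formula delivers
\[y = \frac{-\beta\pm\sqrt{\beta^{2}-4\alpha\gamma}}{2\gamma},\]
i.e.\ the stated $y_{0}^{*}$ and $y_{1}^{*}$. There is no real obstacle here: the entire lemma is three applications of the quadratic formula together with one substitution, and the only point requiring mild care is the sign bookkeeping — in particular, distinguishing the polynomial $\alpha+(\beta+1)y+\gamma y^{2}$ (whose roots are the zeros of $f$) from the shifted polynomial $\alpha+\beta y+\gamma y^{2}=-P_{f_{2}}(y)$ (whose roots are the fixed points of $f$), the latter being precisely what motivates the minus sign in the definition of $P_{f_{2}}$.
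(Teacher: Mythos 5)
Your approach is exactly the paper's: the text offers no proof beyond the remark that ``straightforward calculations lead us to'' the lemma, and your three applications of the quadratic formula plus one substitution are precisely those calculations. The roots, the critical point, the value $f_{ext}=\frac{4\alpha\gamma-(\beta+1)^2}{4\gamma}$, and the fixed points are all verified correctly (modulo the stray ``$-\beta$'' in the paper's printed $y_1^*$, which is evidently a typo for $-\frac{\beta-\sqrt{\beta^2-4\alpha\gamma}}{2\gamma}$, the expression your computation actually produces).

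One step in your write-up does fail, however: the chain $f(y_c)=\alpha+y_c\bigl[(\beta+1)+\gamma y_c\bigr]=\alpha+y_c\cdot\tfrac{\beta+1}{2}$ is fine, but the final equality to $\alpha-\gamma y_c$ is not. Since $\tfrac{\beta+1}{2}=-\gamma y_c$, you get $\alpha+y_c\cdot(-\gamma y_c)=\alpha-\gamma y_c^2$, not $\alpha-\gamma y_c$; the two differ unless $\beta+1=0$ or $2\gamma=-(\beta+1)$. In fact $\alpha-\gamma y_c^2=\alpha-\frac{(\beta+1)^2}{4\gamma}$ agrees with the first expression for $f_{ext}$, so the lemma's ``$\alpha-\gamma y_c$'' is itself a typo for $\alpha-\gamma y_c^2$, and your algebra slip has the effect of ``verifying'' the misprint rather than the correct identity. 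Everything else stands.
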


\section{Linear factors form}
\label{sub:LinearFactorsForm}

Suppose we know both roots of $P_{f_2}$ and they are $y_0,\,y_1\in\mathbb{C}$, which exist by the fundamental theorem of algebra. Then, by the factor theorem, we know that $(y-y_i),\,i\in\{0,1\}$ is a factor of $P_{f_2}$ and we can rewrite it as
\begin{equation}
	P_{f_2}(y)=M(y-y_0)(y-y_1),\quad M\in\mathbb{R}\setminus\{0\}.
\end{equation}
Therefore, we can also rewrite $f_2$ as
\begin{equation}
	f_2(y)=y-M(y-y_0)(y-y_1).
\end{equation}

Since $M\in\mathbb{R}\setminus\{0\}$, then we can define $M:=s\tilde{M}$, where
\begin{equation}
	s:=\mathrm{sgn}(M)=
	\begin{cases}
		1, &\mathrm{if}\,M>0,\\
		-1,&\mathrm{if}\,M<0,
	\end{cases}
\end{equation}
is the sign function and $\tilde{M}:=|M|$; this definition will be useful for our purposes further below.

With the above, we have

\begin{defn}[Linear Factors Form of the Quadratic Map]
	\label{def:LFFQM}
	The GQM can be rewritten as
	\begin{equation}
		f_2(y)=y-s\tilde{M}(y-y_0)(y-y_1):=h_2(y,\,\lambda)
		\label{eq:LinearFactorsForm}
	\end{equation}
	where $M,\,y_1$ and $y_2$ are smooth functions of the parameter $\lambda$. We call $h_2$ the \emph{Linear Factors Form of the Quadratic Map (LFFQM)}.
\end{defn}

Again, straightforward calculations lead us to

\begin{lem}
	\eqref{eq:LinearFactorsForm} has first and second derivatives with respect to $y$ given by
	\begin{equation}
		\begin{aligned}
			h_2'(y,\,\lambda) &= 1-s\tilde{M}\left(2y-y_0-y_1\right),\\
			h_2''(y,\,\lambda) &= -2\,s\tilde{M}.
		\end{aligned}
	\end{equation}
	
	The roots of $h_2$ are
	
	\begin{equation}
		\begin{aligned}
			y_0^r &= -\frac{\sqrt{s^2\tilde{M}^2(y_1-y_0)^2+2s\tilde{M}(y_1+y_0)+1}-s\tilde{M}(y_1+y_0)-1}{2s\tilde{M}}\\
			y_1^r &= \frac{\sqrt{s^2\tilde{M}^2(y_1-y_0)^2+2s\tilde{M}(y_1+y_0)+1}+s\tilde{M}(y_1+y_0)+1}{2s\tilde{M}}
		\end{aligned}
	\end{equation}
	
	and the only critical point is at
	
	\[y_c=\frac{s\tilde{M}(y_1+y_0)+1}{2s\tilde{M}},\]
	
	with corresponding extreme value
	
	\[h_{ext}=y_0+\frac{[s\tilde{M}(y_1-y_0)+1]^2}{4s\tilde{M}}.\]
\end{lem}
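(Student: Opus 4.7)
The plan is to verify each of the four claims by direct symbolic computation, exploiting the fact that $h_2$ is already written in a partially factored form in \eqref{eq:LinearFactorsForm}. First I would differentiate $h_2(y,\lambda)=y-s\tilde{M}(y-y_0)(y-y_1)$ with respect to $y$ using the product rule, obtaining $h_2'(y,\lambda)=1-s\tilde{M}\bigl[(y-y_1)+(y-y_0)\bigr]=1-s\tilde{M}(2y-y_0-y_1)$, which matches the stated first derivative. A second differentiation trivially collapses to $h_2''(y,\lambda)=-2s\tilde{M}$.

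For the critical point, I would set $h_2'(y_c,\lambda)=0$ and solve the resulting linear equation $s\tilde{M}(2y_c-y_0-y_1)=1$ for $y_c$, which gives $y_c=\dfrac{s\tilde{M}(y_0+y_1)+1}{2s\tilde{M}}$ in one step. For the roots, I would expand $(y-y_0)(y-y_1)=y^2-(y_0+y_1)y+y_0 y_1$, rearrange $h_2(y,\lambda)=0$ into the standard monic-in-$y$ quadratic $s\tilde{M}\, y^2 - [1+s\tilde{M}(y_0+y_1)]\,y + s\tilde{M}\,y_0 y_1 = 0$, and apply the quadratic formula. The discriminant $[1+s\tilde{M}(y_0+y_1)]^2 - 4 s^2\tilde{M}^2 y_0 y_1$ then simplifies using the identity $(y_0+y_1)^2-4 y_0 y_1=(y_1-y_0)^2$ to the expression $s^2\tilde{M}^2(y_1-y_0)^2+2s\tilde{M}(y_0+y_1)+1$ appearing under the radical in the statement; distributing the $\pm$ sign of the square root yields the two expressions $y_0^r,\,y_1^r$.

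For the extreme value, I would evaluate $h_2$ at $y_c$. The cleanest route is to first record $y_c-y_0=\dfrac{s\tilde{M}(y_1-y_0)+1}{2s\tilde{M}}$ and $y_c-y_1=\dfrac{1-s\tilde{M}(y_1-y_0)}{2s\tilde{M}}$, then introduce the abbreviation $w:=s\tilde{M}(y_1-y_0)$ so that $(y_c-y_0)(y_c-y_1)=(1-w^2)/(4 s^2\tilde{M}^2)$. Substituting into $h_{ext}=y_c - s\tilde{M}(y_c-y_0)(y_c-y_1)$ and regrouping around $y_0$ yields $h_{ext}=y_0+\dfrac{2(w+1)-(1-w^2)}{4s\tilde{M}}=y_0+\dfrac{(w+1)^2}{4s\tilde{M}}$, which is exactly the stated closed form.

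None of these steps presents a genuine conceptual obstacle; the only thing to watch is careful algebraic bookkeeping with the sign factor $s$ and the modulus $\tilde{M}$, which behave cleanly because $s^2=1$ collapses the discriminant and the extreme-value formula automatically. The main nuisance will be the substitution leading to $(w+1)^2$ in the final claim, so I would perform that manipulation last and explicitly, to make the completion of a perfect square visible to the reader.
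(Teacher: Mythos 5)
Your computations are all correct, and they are exactly the ``straightforward calculations'' the paper invokes without writing out: product-rule differentiation, solving the linear equation $h_2'(y_c)=0$, the quadratic formula with the discriminant simplified via $(y_0+y_1)^2-4y_0y_1=(y_1-y_0)^2$, and evaluation at $y_c$ collapsing to the perfect square $[s\tilde{M}(y_1-y_0)+1]^2$. No gaps; this matches the paper's (implicit) proof.
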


\label{minus_sign}Here the apparently ``unnatural'' minus sign of the definition \ref{def:GQM} of the fixed points polynomial $P_{f_2}$ will show its usefulness: for \emph{purely aesthetic reasons}, in the definition of the LFFQM, we want that $h_2'(y_0)>0$ whenever $0<y_0<y_1$ and $M>0$ which, without loss of generality, is accomplished by arbitrarily introducing the mentioned sign.

\section{Canonical form}

We will introduce a \emph{canonical form} of the quadratic map by requiring both fixed points in the linear factors form \eqref{def:LFFQM} to be real or, equivalently, $\beta^2-4\alpha\gamma>0$ in the general form \eqref{def:GQM}; also, we will require an ``amplitude'', $M(\lambda)$, equal to unity\footnote{Recall $M(\lambda)$ is the function  preceding the product of the linear factors of the fixed points in the LFFQM defined in Eq. \eqref{def:LFFQM}.}, a fixed point mapped to zero\footnote{This is the reason we require both fixed points to be real, otherwise we would need a complex transformation to map the complex fixed point to zero, which is beyond the scope of this work.} and the other fixed point being mapped to

\begin{equation} \label{eq:x1dependence}
	x_1(\lambda):=s\tilde{M}(y_1-y_0).
\end{equation}

The latter is accomplished by taking one of the following linear transformations

\begin{equation}
	\begin{aligned}
		y=T_0(x) &:= y_0+s\tilde{M}^{-1}x, \\
		y=T_1(x) &:= y_1+s\tilde{M}^{-1}x.
	\end{aligned}
\end{equation}

Without loss of generality, we will take transformation $T_0$ and call it simply $T$. Since $T$ is evidently linear, its inverse can be calculated directly as

\begin{equation}
	\label{eq:invT}
	x=T^{-1}(y)=s\tilde{M}(y-y_0).
\end{equation}

Performing the corresponding calculations on the LFFQM we obtain

\begin{defn}[Canonical Quadratic Map]
	\label{def:CQM}
	The map defined by
	\begin{equation}
		g_2(x,\lambda):=x-x(x-x_1(\lambda)),
	\end{equation}
	where $\lambda\in\mathbb{R}$ is a parameter, is called the \emph{Canonical Quadratic Map (CQM)}.
\end{defn}

We have thus mapped the real fixed points $y_0\mapsto 0$ and $y_1\mapsto x_1$. This form is much simpler than the previous ones and it has the advantage of putting the whole parametric dependence onto the single non-constant fixed point $x_1=x_1(\lambda)$ through \eqref{eq:x1dependence}. Implicit in this dependence is the parametric dependence of the corresponding original roots of the LFFQM \eqref{eq:LinearFactorsForm}, so we can explicitly give this dependence if desired. Moreover, the fact that $T$ is actually a \emph{topological conjugacy} between $h_2$ and $g_2$ will be proved in chapter \ref{cha:Generalization}, by which we can state that \emph{the study of stability and chaos in quadratic maps with real fixed points can be summarized by the study of the Canonical Quadratic Map}, which yields its importance.

With some straightforward calculations, we easily arrive at the following

\begin{lem}
	The Canonical Quadratic Map has the roots
	
	\begin{equation}
		\begin{aligned}
			x_0^r &= 0, \\
			x_1^r &= x_1+1,
		\end{aligned}
	\end{equation}
	
	and has derivatives
	
	\begin{equation}
		\begin{aligned}
			\label{eq:CQM-derivatives}
			g_2'(x,\lambda) &=-2x+x_1+1,\\
			g_2''(x,\lambda) &= -2,
		\end{aligned}
	\end{equation}
	
	so that its critical point is at
	
	\[x_c=\frac{x_1+1}{2}\]
	
	with value
	
	\begin{equation}
		x_m=\frac{(x_1+1)^2}{4}=x_c^2.
		\label{eq:CanonicalMaximum}
	\end{equation}
\end{lem}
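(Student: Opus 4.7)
The plan is to expand $g_2(x,\lambda)$ into a standard quadratic in $x$ and then read off all of the quantities by routine calculus. Specifically, I would first distribute the product in the definition to write
\[ g_2(x,\lambda) = x - x(x - x_1) = -x^2 + (x_1+1)x, \]
so that $g_2$ is a downward-opening quadratic in $x$ with parameter $x_1 = x_1(\lambda)$.

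For the roots, I would factor the expression above as $g_2(x,\lambda) = x\bigl(-x + x_1 + 1\bigr)$ and observe that the zero-product property gives exactly $x_0^r = 0$ and $x_1^r = x_1 + 1$. For the derivatives, I would differentiate $-x^2 + (x_1+1)x$ once and twice with respect to $x$ to obtain $g_2'(x,\lambda) = -2x + x_1 + 1$ and $g_2''(x,\lambda) = -2$, which matches the claimed formulas \eqref{eq:CQM-derivatives}.

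For the critical point, I would solve $g_2'(x_c,\lambda) = 0$, i.e., $-2x_c + x_1 + 1 = 0$, yielding $x_c = (x_1+1)/2$; since $g_2''(x_c,\lambda) = -2 < 0$, this critical point is a (strict, non-degenerate) maximum. Finally, to compute the extreme value I would substitute $x_c$ into the factored form $g_2(x,\lambda) = x(-x + x_1 + 1)$. The key observation is that $-x_c + x_1 + 1 = (x_1+1) - (x_1+1)/2 = (x_1+1)/2 = x_c$, so
\[ x_m = g_2(x_c,\lambda) = x_c \cdot x_c = x_c^2 = \frac{(x_1+1)^2}{4}, \]
which gives \eqref{eq:CanonicalMaximum}.

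There is no genuine obstacle here; the entire lemma consists of elementary manipulations of a quadratic polynomial. The only mildly notable point is the neat identity $-x_c + x_1 + 1 = x_c$, which makes the extreme value collapse to the square of the critical point; I would make sure to highlight this because it justifies the suggestive notation $x_m = x_c^2$ used later in the text.
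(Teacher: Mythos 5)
Your proposal is correct and matches what the paper intends: the text introduces this lemma with ``With some straightforward calculations, we easily arrive at the following'' and omits the details, which are exactly the routine expansion, factoring, and differentiation you carry out. The observation that $-x_c + x_1 + 1 = x_c$, explaining the notation $x_m = x_c^2$, is a nice touch consistent with the paper's remark that the extreme value ``is simply the square of the point.''
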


We see then that the second root of the canonical map, $x_1^r$, is always one unit to the right of the non-constant fixed point $x_1$. Also, the critical point is always a \emph{maximum} and is located in the middle point between 1 and $x_1$, with its value being simply the square of the point.

\section{Stability and chaos in the canonical quadratic map}
\subsection{Fixed points}

The stability of the fixed points is given by the absolute value of its multiplier, which must be less than one. Making use of the defined eigenvalue function, we have from \eqref{eq:CQM-derivatives} that for the fixed point $x_0=0$,

\begin{equation}
	\label{eq:eigfun-x0-CQM}
	\phi_0(\lambda)=g_2'(0)=x_1(\lambda)+1
\end{equation}

so that it is straightforward to see that if

\begin{equation}
	\label{eq:stb-cond-x0-CQM}
	\tag{SC0}
	-2<x_1(\lambda)<0
\end{equation}

then $x_0=0$ is a stable fixed point. We will call inequality \ref{eq:stb-cond-x0-CQM} the \emph{stability condition} for the fixed point $x_0=0$. We see then that the stability of $x_0$ actually depends on the value of $x_1$. In terms of the original fixed points of the quadratic map, $y_0$ and $y_1$, what counts is the separation between them scaled by the factor $s\tilde{M}$, as can be seen from equation \eqref{eq:x1dependence}.

On the other hand, for $x_1$,

\begin{equation}
	\label{eq:eigfun-x1-CQM}
	\phi_1(\lambda)=g_2'(x_1)=1-x_1
\end{equation}

so that $x_1$ is stable as long as

\begin{equation}
	\label{eq:stb-cond-x1-CQM}
	\tag{SC1}
	0<x_1(\lambda)<2.
\end{equation}

%This apparently awkward form of putting the inequality will be revealed insightful further below; for the time being it is useful for us to imagine a zero ($=x_0$) before the $-x_1$.
We call the value $b_1=2$ our first \emph{bifurcation value}. Notice that $-b_1=-2$ is also a bifurcation value since it is the limit of the stability condition for $x_0=0$, given by \eqref{eq:stb-cond-x0-CQM}. We summarize this results in the following

\begin{prop}
	Let $x_0=0$ and $x_1$ be the two fixed points of the canonical quadratic map as defined above. Then
	\begin{itemize}
		\item if $-2<x_1<0$, zero will be an asymptotically stable fixed point;
		\item if $0<x_1<2$, $x_1$ will be an asymptotically stable fixed point.
	\end{itemize}
\end{prop}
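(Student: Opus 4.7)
The plan is to reduce the proposition to a direct application of the hyperbolic attractor criterion (Definition \ref{def:attractor-repellor}), using the derivative formulas already established in the preceding lemma. Since $g_2'(x,\lambda) = -2x + x_1 + 1$, the eigenvalue functions $\phi_0$ and $\phi_1$ associated with the fixed points $x_0 = 0$ and $x_1$ have already been computed in equations \eqref{eq:eigfun-x0-CQM} and \eqref{eq:eigfun-x1-CQM}. Thus the entire proof will consist of translating each of the two hypotheses on $x_1$ into the hyperbolic attractor condition $|\phi_i(\lambda)| < 1$.

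First, I would treat the fixed point $x_0 = 0$. Using $\phi_0(\lambda) = x_1(\lambda) + 1$, the condition $|\phi_0(\lambda)| < 1$ is equivalent to $-1 < x_1 + 1 < 1$, i.e.\ $-2 < x_1 < 0$, which is precisely the first hypothesis. By Definition \ref{def:attractor-repellor}, $x_0$ is then hyperbolic with $|\phi_0(\lambda)| < 1$, so it is asymptotically stable (and, by Theorem \ref{thm:attractor-repellor}, has a genuine basin of attraction around it).

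Second, I would do the analogous computation for $x_1$. From $\phi_1(\lambda) = 1 - x_1$, the condition $|\phi_1(\lambda)| < 1$ becomes $-1 < 1 - x_1 < 1$, equivalently $0 < x_1 < 2$, matching the second hypothesis. Definition \ref{def:attractor-repellor} then yields asymptotic stability of $x_1$.

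There is essentially no obstacle: the lemma preceding the proposition has already supplied the derivative formulas, and the stability conditions \eqref{eq:stb-cond-x0-CQM} and \eqref{eq:stb-cond-x1-CQM} are stated in the text immediately above the proposition. The only mild care needed is to note that the endpoints $x_1 \in \{-2, 0, 2\}$ are excluded because at those values the fixed point becomes nonhyperbolic (with multiplier $\pm 1$), and stability at those bifurcation values requires the separate analysis of Theorems \ref{thm:nonhyperbolic-pos} and \ref{thm:nonhyperbolic-neg} rather than Definition \ref{def:attractor-repellor}. Also, at $x_1 = 0$ the two fixed points collide, so the hypothesis $x_1 \neq 0$ is implicit in the statement.
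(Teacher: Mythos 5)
Your proof is correct and follows essentially the same route as the paper: the text immediately preceding the proposition derives $\phi_0(\lambda)=x_1+1$ and $\phi_1(\lambda)=1-x_1$ from $g_2'(x,\lambda)=-2x+x_1+1$ and obtains the two stability conditions by imposing $|\phi_i(\lambda)|<1$, exactly as you do. Your added remarks on the excluded endpoints and the nonhyperbolic cases are consistent with the paper's later treatment of the bifurcation values $\pm 2$ and of the multiplicity case $x_1=0$.
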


\subsection{Periodic points of period two}

To determine the period two periodic points we must of course solve

\begin{equation*}
	g_2^2(x)=x.
\end{equation*}

Some basic algebra shows that

\begin{equation*}
	g_2^2(x)=(x_1+1)^2 x-(x_1+1)(x_1+2)x^2+2(x_1+1)x^3-x^4.
\end{equation*}

Therefore, we must solve

\begin{equation*}
	x_1(x_1+2) x-(x_1+1)(x_1+2)x^2+2(x_1+1)x^3-x^4=0.
\end{equation*}

Clearly, we must factor out $x=0$ and $x=x_1$ since we know those are the fixed points of the original map, and we are looking (only) for period two points. Performing the factorization, we get

\begin{equation}
	\label{eq:CQM-per2eq}
	x^2-(x_1+2)x+(x_1+2)=0.
\end{equation}

Solving the latter equation we get to the following

\begin{lem}
	The canonical quadratic map has a unique 2-cycle given by
	\begin{equation*}
		\label{eq:per2-CQM}
		x_0^2=\frac{1}{2}\left[x_1+2+\sqrt{(x_1)^2-4}\right], \quad x_1^2=\frac{1}{2}\left[x_1+2-\sqrt{(x_1)^2-4}\right],
	\end{equation*}
	where the superscript will indicate the period of the periodic point.
\end{lem}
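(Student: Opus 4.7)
The plan is to follow the standard recipe for computing periodic points of prime period two: set $g_2^2(x)=x$, divide out the fixed points, and apply the quadratic formula to the resulting degree-two polynomial. Essentially all the work has already been done in the paragraph preceding the lemma, where the author reduces the period-two equation to
\[
x^2-(x_1+2)x+(x_1+2)=0,
\tag{$\star$}
\]
so my proof would begin by quoting that reduction: compute $g_2^2(x)$, which is a degree-four polynomial in $x$; observe that both $x=0$ and $x=x_1$ are (true) fixed points of $g_2$, hence they are roots of $g_2^2(x)-x$; and factor them out to obtain $(\star)$.

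Next I would invoke the quadratic formula on $(\star)$. The discriminant is
\[
(x_1+2)^2-4(x_1+2)=(x_1+2)(x_1+2-4)=(x_1+2)(x_1-2)=x_1^2-4,
\]
which is the key algebraic simplification; after this, the formula immediately yields
\[
x=\tfrac{1}{2}\left[(x_1+2)\pm\sqrt{x_1^2-4}\right],
\]
matching the two values $x_0^2$ and $x_1^2$ in the statement. I would then remark that these two roots are a genuine 2-cycle (and not fixed points) because we have already quotiented out the fixed-point factor $x(x-x_1)$; consequently $g_2$ exchanges them, i.e.\ $g_2(x_0^2)=x_1^2$ and vice versa, which can be verified directly (or noted from Vieta's formulas: their product $x_1+2$ and sum $x_1+2$ are symmetric, so $g_2$ permutes them).

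Uniqueness is the mildest point to address but worth stating explicitly: since $g_2^2$ is a polynomial of degree $4$ and the two fixed points of $g_2$ are also fixed by $g_2^2$, the polynomial $g_2^2(x)-x$ has at most two additional real roots, which are exactly the two we found. Hence there is at most one 2-cycle, and the two roots of $(\star)$ constitute it (they are real precisely when $|x_1|\ge 2$, a condition which fits the bifurcation picture in which the 2-cycle is born when $x_1$ leaves the stability interval $(0,2)$ of the fixed point $x_1$).

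I do not expect any genuine obstacle here; the entire argument is a routine quadratic-formula computation. The only mildly delicate point is bookkeeping of the discriminant to show that $(x_1+2)^2-4(x_1+2)$ simplifies to the clean expression $x_1^2-4$ that appears in the statement, and then confirming that the two solutions indeed form a single 2-cycle rather than being two separate fixed points of $g_2^2$ unrelated by the dynamics.
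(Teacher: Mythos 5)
Your proposal is correct and follows essentially the same route as the paper: the reduction to $x^2-(x_1+2)x+(x_1+2)=0$ is done in the text immediately preceding the lemma, and the paper's proof likewise verifies that $g_2$ exchanges the two roots and derives uniqueness from the fact that any period-2 point must be a root of that quadratic. Your added detail on the discriminant simplification and the degree count is just a fuller write-up of the same argument.
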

\begin{proof}
	It is straightforward to see that $g_2(x_0^2)=x_1^2$ and $g_2(x_1^2)=x_0^2$, therefore proving that both $x_0^2$ and $x_1^2$ are period 2 periodic points of $g_2$ and the orbit $\{x_0^2,\,x_1^2\}$ is indeed a 2-cycle. The uniqueness comes from the fact that both are algebraic roots of the second degree equation \eqref{eq:CQM-per2eq} and every such period 2 periodic point must be so.
\end{proof}

To analyze the stability of this 2-cycle, we must calculate its own eigenvalue function. The derivative of $g_2^2$ is

\begin{equation*}
	\frac{\partial g_2^2}{\partial x}(x)=(x_1+1)^2-2(x_1+1)(x_1+2)x+6(x_1+1)x^2-4x^3,
\end{equation*}

so that evaluating in $x=x_0^2(\lambda)$ and $x=x_1^2(\lambda)$ we have

\begin{equation}
	\label{eq:eigfun-x02-CQM}
	\left(g^2_2\right)'(x_0^2(\lambda))=\left(g^2_2\right)'(x_1^2(\lambda))=\phi_2(\lambda)=5-x_1(\lambda)^2,
\end{equation}

so that, not surprisingly, both points of the 2-cycle have the same stability criteria. The stability region is then determined by

\[\vert 5-(x_1)^2\vert<1\]

whose solutions lead us to

\begin{lem}
	\label{lem:CQM-stb-2cycle}
	The unique 2-cycle of the canonical quadratic map is asymptotically stable whenever one of the following inequalities is met
	\begin{equation*}
		\begin{aligned}
			-\sqrt{6}	&< x_1 &< -2,\\
			2 &< x_1 &< \sqrt{6},
		\end{aligned}
	\end{equation*}
	where $x_1$ is the nonzero fixed point of the CQM.
\end{lem}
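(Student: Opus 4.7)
The plan is to apply the standard hyperbolic stability criterion (Definition \ref{def:attractor-repellor}) to the 2-cycle, using the eigenvalue function $\phi_2(\lambda) = 5 - x_1(\lambda)^2$ already derived in \eqref{eq:eigfun-x02-CQM}. Since the multiplier along a periodic orbit is the same at every point of the orbit (by the chain rule applied to $g_2^2$), it suffices to check asymptotic stability for either of the two points $x_0^2$ or $x_1^2$; both are simultaneously attracting exactly when $|\phi_2(\lambda)| < 1$.

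First I would record that, by \eqref{eq:eigfun-x02-CQM}, the condition for the 2-cycle to be a hyperbolic attractor reduces to the single scalar inequality
\[
\left| 5 - x_1^2 \right| < 1,
\]
where $x_1 = x_1(\lambda)$ is the nonzero fixed point of the canonical quadratic map. Then I would unfold the absolute value into the equivalent chain
\[
-1 < 5 - x_1^2 < 1,
\]
which, after rearranging, is just $4 < x_1^2 < 6$. Taking square roots yields $2 < |x_1| < \sqrt{6}$, and splitting into signs gives exactly the two disjoint intervals $-\sqrt{6} < x_1 < -2$ and $2 < x_1 < \sqrt{6}$ claimed in the lemma.

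To close the argument, I would invoke Definition \ref{def:attractor-repellor} once more: since $x_0^2$ and $x_1^2$ are hyperbolic periodic points of $g_2$ with multiplier of absolute value strictly less than one throughout either of these intervals, they are asymptotically stable there, so the 2-cycle $\{x_0^2, x_1^2\}$ is an attracting 2-cycle in the sense of Theorem \ref{thm:attractor-repellor}. Finally, because the uniqueness of the 2-cycle was already established in the preceding lemma, no additional work is needed to show that this is \emph{the} attracting 2-cycle.

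There is essentially no hard step here; the only subtlety is making sure the multiplier is the common value at both points of the cycle (immediate from the chain rule on $g_2^2$) and noting that the boundary values $x_1 = \pm 2$ and $x_1 = \pm\sqrt{6}$ correspond to nonhyperbolic cases (multipliers $+1$ and $-1$, respectively), which lie outside the scope of this lemma and would have to be handled separately by the nonhyperbolic criteria of Theorems \ref{thm:nonhyperbolic-pos} and \ref{thm:nonhyperbolic-neg}.
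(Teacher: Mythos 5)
Your proposal is correct and follows the same route as the paper: the text derives the lemma precisely by imposing $\vert 5-(x_1)^2\vert<1$ on the common multiplier $\phi_2(\lambda)=5-x_1(\lambda)^2$ from \eqref{eq:eigfun-x02-CQM} and solving to get $4<x_1^2<6$. Your added remarks on the shared multiplier along the orbit and on the nonhyperbolic boundary cases $x_1=\pm2$, $\pm\sqrt{6}$ are consistent with, and slightly more explicit than, the paper's treatment.
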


So, we see that the stability region is divided in two in terms of the value of the original fixed point $x_1$. As a reference, $\sqrt{6}\approx2.44949$. We call this value $b_2=\sqrt{6}$ the second bifurcation value of the parameter $\lambda$ for the canonical quadratic map. Notice again that $-b_2=-\sqrt{6}$ is also a bifurcation value.

It is worth noting that in the case of the first inequality of lemma \ref{lem:CQM-stb-2cycle}, it is the zero fixed point that produces a bifurcation, while for the second one, it is $x_1$ that bifurcates to produce the 2-cycle. The latter can be seen from lemma \ref{eq:per2-CQM} since, for $|x_1|<2$, the 2-cycle is not real and it only begins to be so at the bifurcation values $\pm b_1$; when $x_1=-2$, both $x_0^2$ and $x_1^2$ are zero (and equal to the zero fixed point) and thereon separate as the two branches of the square root function (positive and negative). On the other hand, when $x_1=2$, both $x_0^2$ and $x_1^2$ are also 2 in value and thereon separate in the same way as before.

We can now prove that, indeed, what happens at the bifurcation values of $\pm b_1=\pm2$ are period doubling bifurcations.

\begin{prop}
	\label{prop:CQM-per2bif-b1}
	Let\footnote{The dot represents derivative with respect to the parameter $\lambda$ to avoid confusion with the apostrophe representing derivative with respect to $x$.} $x_1(\lambda_0)=-2$ (respectively, $x_1(\lambda_0)=2$) and $\dot{x_1}(\lambda_0)\neq 0$. Then the zero fixed point (respectively, the $x_1$ fixed point) of the CQM undergoes a period doubling bifurcation precisely when $\lambda=\lambda_0$.
\end{prop}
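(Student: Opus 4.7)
The plan is to invoke Theorem \ref{thm:perioddoubling} and verify its three hypotheses directly, splitting the argument into the two symmetric cases. The expressions for $\phi_0$ and $\phi_1$ from Eqs.~\eqref{eq:eigfun-x0-CQM} and \eqref{eq:eigfun-x1-CQM}, together with the formula $(g_2^2)'(0) = (x_1+1)^2$ extractable from the derivative of $g_2^2$ already computed just before Eq.~\eqref{eq:eigfun-x02-CQM}, should make the checks essentially mechanical once the setup is right.

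For the case $x_1(\lambda_0)=-2$ the map $f_{\lambda}:=g_2(\cdot,\lambda)$ and the candidate fixed point $x_*=0$ fit the hypotheses of Theorem \ref{thm:perioddoubling} almost for free. Hypothesis (1) holds identically since $g_2(0,\lambda)=0-0\cdot(0-x_1(\lambda))=0$ for every $\lambda$; hypothesis (2) is just $\phi_0(\lambda_0)=x_1(\lambda_0)+1=-1$; and for hypothesis (3) I would differentiate $(g_2^2)'(0,\lambda)=(x_1(\lambda)+1)^2$ with respect to $\lambda$ to get $2(x_1(\lambda)+1)\dot{x}_1(\lambda)$, which at $\lambda_0$ equals $-2\dot{x}_1(\lambda_0)\neq 0$ by hypothesis. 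Theorem \ref{thm:perioddoubling} then applies verbatim and gives a period doubling bifurcation at $\lambda_0$.

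The case $x_1(\lambda_0)=2$ requires a little more care because the relevant fixed point $x_1(\lambda)$ itself moves with $\lambda$, so hypothesis (1) of Theorem \ref{thm:perioddoubling} is not literally satisfied. My plan is to translate coordinates by $u:=x-x_1(\lambda)$ and study the conjugate family
\[\tilde{g}(u,\lambda):=g_2\bigl(u+x_1(\lambda),\lambda\bigr)-x_1(\lambda),\]
for which $\tilde{g}(0,\lambda)=0$ holds for all $\lambda$ in a neighborhood of $\lambda_0$. Since the translation is a $\lambda$-dependent conjugacy that is smooth and affine in $u$, it preserves periods, multipliers, and the occurrence of a period doubling bifurcation. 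Hypothesis (2) then reads $\tilde{g}'(0,\lambda_0)=g_2'(x_1(\lambda_0),\lambda_0)=1-x_1(\lambda_0)=-1$ using Eq.~\eqref{eq:eigfun-x1-CQM}; and for hypothesis (3) the chain rule gives $(\tilde{g}^2)'(0,\lambda)=(g_2^2)'(x_1(\lambda),\lambda)=[g_2'(x_1(\lambda),\lambda)]^2=(1-x_1(\lambda))^2$, whose $\lambda$-derivative at $\lambda_0$ is $-2(1-x_1(\lambda_0))\dot{x}_1(\lambda_0)=2\dot{x}_1(\lambda_0)\neq 0$.

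The main obstacle I anticipate is precisely this second case: one must resist the temptation to blindly substitute $x=x_1(\lambda)$ into $(g_2^2)'$ and differentiate, since the point of evaluation is itself $\lambda$-dependent and the total derivative would contain spurious terms. The translation above isolates the ``intrinsic'' multiplier derivative that Theorem \ref{thm:perioddoubling} actually needs. As a consistency check I would also observe that the 2-cycle formula in Lemma~\ref{lem:CQM-stb-2cycle} (the period-2 points $\tfrac12[x_1+2\pm\sqrt{x_1^2-4}]$) becomes real exactly when $|x_1|$ crosses $2$, collapsing onto the zero fixed point at $x_1=-2$ and onto $x_1$ itself at $x_1=2$; this geometrically confirms the ``period doubling'' interpretation at both $\pm b_1$ produced by the theorem, and distinguishes these bifurcations from saddle-node behavior.
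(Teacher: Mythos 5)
Your proof is correct and follows the same overall strategy as the paper's: verify the three hypotheses of Theorem \ref{thm:perioddoubling}. For the case $x_1(\lambda_0)=-2$ the two arguments coincide, except that you compute hypothesis (3) from the correct quantity $(g_2^2)'(0,\lambda)=(x_1(\lambda)+1)^2$, whose $\lambda$-derivative is $-2\dot{x}_1(\lambda_0)\neq 0$, whereas the paper points to Eq.~\eqref{eq:eigfun-x02-CQM} (the multiplier $5-x_1^2$ evaluated at the 2-cycle points rather than at the fixed point $0$); both give a nonzero derivative under the hypothesis $\dot{x}_1(\lambda_0)\neq 0$, but your computation is the one the theorem literally asks for. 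The substantive difference is in the case $x_1(\lambda_0)=2$: the paper dismisses it as ``analogous,'' while you correctly observe that hypothesis (1) of Theorem \ref{thm:perioddoubling} fails verbatim because the fixed point $x_1(\lambda)$ moves with $\lambda$, and you repair this with the $\lambda$-dependent translation $u=x-x_1(\lambda)$, which is a smooth affine conjugacy for each $\lambda$ and hence preserves periods and multipliers. This buys you a rigorous reduction to the stationary-fixed-point setting that the theorem assumes, and your resulting checks $(1-x_1(\lambda_0))^2$ for the multiplier and $2\dot{x}_1(\lambda_0)\neq 0$ for the transversality condition are correct. Your closing consistency check against the explicit 2-cycle of Lemma \ref{lem:CQM-stb-2cycle} is a nice confirmation that the paper also makes, in the discussion preceding the proposition.
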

\begin{proof}
	According to theorem \ref{thm:perioddoubling} we must prove that:
	\begin{enumerate}
		\item $g_2(0)=0$ for all $\lambda$ in an interval around $\lambda=-2$. This is trivially true since the zero fixed point does not vary with the parameter.
		\item $g_2'(0)=-1$. Which follows directly from eq. \ref{eq:eigfun-x0-CQM}.
		\item $\left.\frac{\partial\left(g^2_2\right)'}{\partial\lambda}\right|_{\lambda=-2}(0)\neq 0$. From \eqref{eq:eigfun-x02-CQM} we see that this is true since $\dot{x_1}(\lambda)\neq 0$ by hypothesis.
	\end{enumerate}
	The case for the period doubling bifurcation of the other fixed point when $x_1(\lambda)=2$ is analogous.
\end{proof}

The requirement $\dot{x_1}(\lambda)\neq 0$ will be understood more clearly in the examples below. Likewise, we can prove that, in turn, the fixed point $x_0^2$ (or $x_1^2$) of $g_2^2$ ---periodic point of period 2 for $g_2$--- also undergoes a period doubling bifurcation associated with the second bifurcation value when $x_1(\lambda)=\pm b_2=\pm\sqrt{6}$.

\begin{prop}
	\label{prop:CQM-per2bif-b2}
	Let $x_1(\lambda_0)=-\sqrt{6}$ and $\dot{x_1}(\lambda_0)\neq 0$. Then the fixed points $x_0^2$ and $x_1^2$ of the $g_2^2$ undergo a period doubling bifurcation precisely when $\lambda=\lambda_0$.
\end{prop}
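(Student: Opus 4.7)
The plan is to apply Theorem \ref{thm:perioddoubling} to the map $f_\lambda := g_{2,\lambda}^2$ at the fixed point $x_0^2(\lambda)$ (and analogously at $x_1^2(\lambda)$), so that a period doubling of $g_2^2$ gives birth to a $4$-cycle of $g_2$. The key complication compared with Proposition \ref{prop:CQM-per2bif-b1} is that the fixed-point curve $x_0^2(\lambda)$ is \emph{not} constant in $\lambda$. I would remove this obstacle up front by introducing the translated coordinate $\tilde{x} := x - x_0^2(\lambda) + x_0^2(\lambda_0)$, so that in the conjugate system the fixed point stays anchored at $x_0^2(\lambda_0)$ for all $\lambda$ near $\lambda_0$; since translation by a $\lambda$-dependent constant is an affine change of variable, derivatives in $x$ at corresponding points are preserved, and the three hypotheses of Theorem \ref{thm:perioddoubling} may then be checked in these coordinates without any spurious chain-rule terms.

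Next I would verify the three hypotheses. Hypothesis (1) (persistence of the fixed point) follows from the explicit formula just preceding the proposition: $x_0^2(\lambda)$ and $x_1^2(\lambda)$ are smooth functions of $\lambda$ wherever $|x_1(\lambda)| > 2$, and by continuity this holds on an open neighborhood of $\lambda_0$ since $|x_1(\lambda_0)| = \sqrt{6} > 2$. Hypothesis (2) is \eqref{eq:eigfun-x02-CQM} evaluated at $\lambda_0$:
\[
(g_{2,\lambda_0}^2)'(x_0^2(\lambda_0)) \;=\; \phi_2(\lambda_0) \;=\; 5 - x_1(\lambda_0)^2 \;=\; 5 - 6 \;=\; -1.
\]
For hypothesis (3), in the translated coordinates the chain rule at the constant fixed point gives $(g_2^4)'(x_0^2(\lambda)) = [\phi_2(\lambda)]^2 = (5 - x_1(\lambda)^2)^2$; differentiating in $\lambda$ and evaluating at $\lambda_0$ yields
\[
\left.\frac{\partial (g_2^4)'}{\partial \lambda}\right|_{\lambda_0}\!\!(x_0^2(\lambda_0)) \;=\; -4\,x_1(\lambda_0)\bigl(5 - x_1(\lambda_0)^2\bigr)\,\dot{x}_1(\lambda_0) \;=\; -4\sqrt{6}\,\dot{x}_1(\lambda_0),
\]
which is nonzero by the standing hypothesis $\dot{x}_1(\lambda_0) \neq 0$. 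The argument for $x_1^2$ is identical, since \eqref{eq:eigfun-x02-CQM} records that $\phi_2$ takes the same value on both points of the $2$-cycle.

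The step I expect to be most delicate is precisely this handling of the moving fixed point. If one tried to work in the original coordinates, condition (3) would involve a partial derivative of $(g_{2,\lambda}^4)'$ with respect to $\lambda$ at the \emph{number} $x_0^2(\lambda_0)$, which is not simply $d(\phi_2^2)/d\lambda$ but differs from it by the Lie-type correction $(g_{2,\lambda_0}^4)''(x_0^2(\lambda_0)) \cdot \dot{x_0^2}(\lambda_0)$ that accounts for the motion of the fixed point. Translating the fixed point to a $\lambda$-independent position, as proposed above, removes that contribution cleanly and reduces the proof to a direct parallel of Proposition \ref{prop:CQM-per2bif-b1}; the only substantive verification that remains is the arithmetic $5 - (\sqrt{6})^2 = -1$ that accounts for $b_2 = \sqrt{6}$ being the second bifurcation value.
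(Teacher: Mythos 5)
Your proof is correct and follows essentially the same route as the paper: apply Theorem \ref{thm:perioddoubling} to $g_2^2$ and verify its three hypotheses via the eigenvalue function $\phi_2(\lambda)=5-x_1(\lambda)^2$. You are in fact more careful than the paper on two points: the paper merely cites Theorem \ref{thm:fixedp} for hypothesis (1) without addressing that the fixed point $x_0^2(\lambda)$ moves with $\lambda$ (your translation to a $\lambda$-independent position handles this cleanly), and in hypothesis (3) the paper writes $\left(1-(x_1)^2\right)^2$ in place of $\left(5-(x_1)^2\right)^2$ and arrives at $120\,\dot{x_1}(\lambda_0)$, whereas your $-4\sqrt{6}\,\dot{x_1}(\lambda_0)$ is the correct value --- though both are nonzero, so the conclusion stands either way.
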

\begin{proof}
	Consider first the case for $x_0^2$ given by lemma \ref{eq:per2-CQM}. According to theorem \ref{thm:perioddoubling} we must prove that:
	\begin{enumerate}
		\item $\left(g_2^2\right)'(x_0^2)=-1$. Which follows directly from eq. \ref{eq:eigfun-x02-CQM} since by hypothesis $x_1(\lambda_0)=-\sqrt{6}$.
		\item $g_2^2(x_0^2)=x_0^2$ for all $\lambda$ in an interval around $\lambda=-\sqrt{6}$. Theorem \ref{thm:fixedp} allows us to affirm this since by the above point $\left(g_2^2\right)'(x_0^2)\neq1$.
		\item $\left.\frac{\partial\left(g^4_2\right)'}{\partial\lambda}\right|_{\lambda=\lambda_0}(x_0^2)\neq 0$. Since, by the chain rule,
		\begin{equation*}
			\begin{aligned}
				\left(g_2^4(x_0^2)\right)' 	&= \left(g_2^2\left(g_2^2(x_0^2)\right)\right)'\\
				&= \left(g_2^2\right)'\left(g_2^2(x_0^2)\right)\,\left(g_2^2(x_0^2)\right)'\\
				&= \left(g_2^2(x_0^2)\right)'\left(1-(x_1)^2\right)\\
				&=  \left(1-(x_1)^2\right)^2
			\end{aligned}
		\end{equation*}
		using eq. \eqref{eq:eigfun-x02-CQM} and the fact that $x_0^2$ is a fixed point of $g_2^2$. The partial derivative with respect to $\lambda$ is then
		\begin{equation*}
			\begin{aligned}
				\frac{\partial\left(g^4_2\right)'}{\partial\lambda}(x_0^2(\lambda_0)) &= -4\left(1-(x_1(\lambda_0))^2\right)(x_1(\lambda_0))^2\dot{x_1}(\lambda_0)\\
				&= 120\,\dot{x_1}(\lambda_0)\neq 0
			\end{aligned}
		\end{equation*}
		since $x_1(\lambda_0)=-\sqrt{6}$ and $\dot{x_1}(\lambda_0)\neq0$ by hypothesis.
	\end{enumerate}
	The cases for $x_1^2$ and $\lambda_0=-\sqrt{6}$ are analogous.
\end{proof}

\subsection{Periodic points of higher period}

Some more algebra shows us that

\begin{multline}
	\label{eq:g23}
	g_2^3(x)=x\,\left( {x_1}^{3}+3\,{x_1}^{2}+3\,x_1+1\right) +{x}^{2}\,\left( -{x_1}^{4}-5\,{x_1}^{3}-10\,{x_1}^{2}-9\,x_1-3\right)\\
	+{x}^{3}\,\left( 2\,{x_1}^{4}+10\,{x_1}^{3}+20\,{x_1}^{2}+18\,x_1+6\right) +{x}^{4}\,\left( -{x_1}^{4}-10\,{x_1}^{3}-25\,{x_1}^{2}-25\,x_1-9\right)\\
	+{x}^{5}\,\left( 4\,{x_1}^{3}+18\,{x_1}^{2}+24\,x_1+10\right) +{x}^{6}\,\left( -6\,{x_1}^{2}-14\,x_1-8\right) +{x}^{7}\,\left( 4\,x_1+4\right) -{x}^{8}
\end{multline}

with derivative

\begin{multline}
	\label{eq:d1g23}
	\left(g_2^3(x)\right)'=\left({x_1}^{3}+3\,{x_1}^{2}+3\,x_1+1\right)+x\,\left( -2\,{x_1}^{4}-10\,{x_1}^{3}-20\,{x_1}^{2}-18\,x_1-6\right)\\
	+{x}^{2}\,\left( 6\,{x_1}^{4}+30\,{x_1}^{3}+60\,{x_1}^{2}+54\,x_1+18\right)+{x}^{3}\, \left( -4\,{x_1}^{4}-40\,{x_1}^{3}-100\,{x_1}^{2}-100\,x_1-36\right)\\
	+{x}^{4}\left( 20\,{x_1}^{3}+90\,{x_1}^{2}+120\,x_1+50\right)+{x}^{5}\,\left(-36\,{x_1}^{2}-84\,x_1-48\right)+{x}^{6}\,\left( 28\,x_1+28\right)-8\,{x}^{7}
\end{multline}

So that, in order to find the periodic points of period three we must, in principle, explicitly solve

\begin{multline}
	-x^7+4(x_1 +1)\,x^6-(6\,x_1^2+14\,x_1+8) \,x^5+( 4\,x_1^3+18\,x_1^2+24\,x_1 +10) \,x^4 \\
	-(x_1^4+10\,x_1^3+25\,x_1^2+25\,x_1 +9) \,x^3+( 2\,x_1^4+10\,x_1^3+20\,x_1^2+18\,x_1 +6) \,x^2\\
	-(x_1^4+5\,x_1^3+10\,x_1^2+9\,x_1 +3) \,x+( x_1^3+3\,x_1^2+3\,x_1 +1)=0
\end{multline}

which is an seventh degree polynomial in $x$. Even factoring out the other known fixed point of $g_2$, $x_1$, we are still left with a sixth degree polynomial on $x$ which is, in general, impossible to solve explicitly by algebraic means. Nevertheless, for specific polynomials we can determine the roots numerically.

Much less can be said about determining higher period periodic points. However, we may still approximate the stability regions of these $k$-cycles by numerically determining the bifurcation value of the parameter $\lambda$, which in this case we have done by plotting very high-precision bifurcation diagrams (see Appendix \ref{app:numerical}). One way to determine an approximation to the value of the infinite period bifurcation value, after which the onset of chaos takes place, is through equation \eqref{eq:Bapprox}, which yields a very rough estimate of $b_{\infty}\approx2.57$. Some bifurcation values for the CQM found in this work are shown in table \ref{tab:CQM-BifurcationValues}; moreover, table \ref{tab:CQM-StabilityConditions} shows the the stability conditions for each periodic point,up to period $2^7$; the boundary of the stability bands correspond to the $b_k$ bifurcation values.

\begin{table}
	\begin{tabular}[]{|c|c|c|}
		\hline
		$k$ & $b_k$ \tabularnewline
		\hline
		0 & 0 \tabularnewline
		\hline
		1 & 2 \tabularnewline
		\hline
		2 & $\sqrt{6}$\tabularnewline
		\hline
		3 & $2.5440\pm0.0005$\tabularnewline
		\hline
		4 & $2.5642\pm0.0002$\tabularnewline
		\hline
		5 & $2.56871\pm4\times10^{-5}$\tabularnewline
		\hline
		6 & $2.56966\pm1\times10^{-5}$\tabularnewline
		\hline
		7 & $2.569881\pm5\times10^{-6}$\tabularnewline
		\hline
		$\vdots$ & $\vdots$\tabularnewline
		\hline
		$\infty$ & $\sim2.569941\pm5\times10^{-7}$\tabularnewline
		\hline
	\end{tabular}
	\caption{Bifurcation values for the Canonical Quadratic Map. $b_0=0$ is included only as a reference, although it is not a bifurcation value.}
	\label{tab:CQM-BifurcationValues}
\end{table}

\begin{table}
	\begin{tabular}[]{|c|c|c|}
		\hline
		Period & Periodic Points & Stability Condition \tabularnewline
		\hline
		1 & $x_0^1=0$ & $-b_1< x_1 <b_0$ \tabularnewline
		\hline
		1 & $x_1^1=x_1$ & $b_0<x_1<b_1$ \tabularnewline
		\hline
		2 & $x_0^2=\frac{1}{2}\left[x_1+2+\sqrt{(x_1)^2-4}\right]$ & $b_1<\vert x_1\vert<b_2$ \tabularnewline
		& $x_1^2=\frac{1}{2}\left[x_1+2-\sqrt{(x_1)^2-4}\right]$ & \tabularnewline
		\hline
		4 & $x_0^4,\, x_1^4,\, x_2^4,\, x_3^4$ & $b_2<|x_1|<b_3$ \tabularnewline
		\hline
		8 & $x_0^8,\, x_1^8,\, ..., x_7^8$ & $b_3<|x_1|<b_4$ \tabularnewline
		\hline
		16 & $x_0^{16},\, x_1^{16},\, ..., x_{15}^{16}$ & $b_4<|x_1|<b_5$ \tabularnewline
		\hline
		32 & $x_0^{32},\, x_1^{32},\, ..., x_{31}^{32}$ & $b_5<|x_1|<b_6$ \tabularnewline
		\hline
		64 & $x_0^{64},\, x_1^{64},\, ..., x_{63}^{64}$ & $b_6<|x_1|<b_7$ \tabularnewline
		\hline
		\vdots & \vdots & \vdots \tabularnewline
		\hline
		$\infty$ & - & $\vert x_1\vert> b_{\infty}$ \tabularnewline
		\hline
	\end{tabular}
	\caption{Periodic points and corresponding stability conditions for the canonical quadratic map.}
	\label{tab:CQM-StabilityConditions}
\end{table}

\subsection{Fixed points with multiplicity}

In the CQM, the only way we can have multiplicity in the fixed points is when $x_1(\lambda)=x_0=0$. In this case, $g_2$ takes the form

\begin{equation}
	\label{CQM-multiplicity}
	g_2(x)=x\,(1-x).
\end{equation}

Its derivative is then

\begin{equation}
	g_2'(x)=1-2x,
\end{equation}

and, therefore, $g_2'(0)=1$, so that it is a nonhyperbolic fixed point, according to definition \ref{def:hyperbolic-fp}.

\begin{prop}\label{prop:quadsemi}
	Let $g_2$ be the CQM with a single fixed point with multiplicity of two, as in eq. \eqref{CQM-multiplicity}. Then the single fixed point $x_0=x_1=0$ is semistable from the right.
\end{prop}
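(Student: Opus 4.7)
The strategy is to recognize this as an immediate application of Theorem~\ref{thm:semistability}, since we are in the nonhyperbolic case with multiplier $+1$. First I would verify the hypotheses: from \eqref{CQM-multiplicity} we have $g_2(x)=x-x^2$, so $g_2'(x)=1-2x$ and $g_2''(x)=-2$. Evaluating at the multiple fixed point $x=0$ gives $g_2'(0)=1$ and $g_2''(0)=-2\neq 0$, so $0$ is a nonhyperbolic fixed point of the type covered by Theorem~\ref{thm:semistability}. Since $g_2''(0)=-2<0$, part~(1) of that theorem applies and yields directly that $0$ is semiasymptotically stable from the right, which in particular gives the weaker statement of semistability from the right claimed here.

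As a sanity check and a more self-contained alternative, I would also give a direct monotonicity argument. For $0<x<1$, we have $g_2(x)=x(1-x)\in(0,x)$, so the forward orbit $\{g_2^n(x_0)\}$ of any initial point $x_0\in(0,1)$ is strictly decreasing and bounded below by $0$. Hence it converges, and by continuity of $g_2$ the limit must be a fixed point in $[0,1)$, which forces it to equal $0$. Choosing $\delta=\min(\varepsilon,1)$ then shows that $0<x_0-0<\delta$ implies $0<g_2^n(x_0)<\varepsilon$ for all $n\geq 0$, giving both the ``$\varepsilon$--$\delta$'' semistability and the limit requirement of semiasymptotic stability from the right.

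Finally, to emphasize that the ``from the right'' qualifier is essential (rather than accidental), I would note that no analogous statement holds from the left: for $x_0<0$ we have $g_2(x_0)=x_0-x_0^2<x_0<0$, so iterates strictly decrease away from $0$ and in fact escape to $-\infty$. Thus $0$ fails to be stable from the left, and the proposition is sharp. There is essentially no obstacle in this argument; the only delicate point is simply to match the algebraic sign of $g_2''(0)$ to the correct side (``right'' versus ``left'') in the statement of Theorem~\ref{thm:semistability}, and the direct monotonicity check above removes any ambiguity.
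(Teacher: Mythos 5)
Your proof is correct and its main line of argument is the same as the paper's: both verify $g_2'(0)=1$ and $g_2''(0)=-2\neq 0$ and then invoke Theorem~\ref{thm:semistability}(1) to conclude semiasymptotic stability from the right. Your supplementary direct monotonicity argument (orbits in $(0,1)$ decrease to the unique fixed point $0$, while orbits starting at $x_0<0$ decrease to $-\infty$) is a self-contained and fully rigorous version of what the paper only sketches via the sign of $g_2'-1$ on either side of $0$, so it is a welcome addition rather than a departure.
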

\begin{proof}
	It is straightforward to see that $g_2''(x)=-2$ and then $g_2'''(x)=0$ for all $x$. Therefore, in particular, $g_2''(0)=-2\neq0$ and then $g_2'''(0)=0$, so that, by the latter, we cannot use theorem \ref{thm:nonhyperbolic-pos}. However, theorem \ref{thm:semistability} tells us that the fixed point is semistable from the right. Indeed, as $g_2$ is concave downward at $x=0$, then $g_2'$ is decreasing in a small neighborhood about zero and, since $g_2'(0)=1$ and $g_2'$ is continuous, we have that there exists $\delta>0$, so that $g_2'(x)>1$ for $x\in(-\delta,\,0)$ and $g_2'(x)<1$ for $x\in(0,\,\delta)$. Consequently, $x_0=x_1=0$ is unstable from the left and semistable from the right, according to definition \ref{def:semistability} and the proof of theorem \ref{thm:attractor-repellor} \cite{Elaydi}.
\end{proof}

It is worth noting that, although the case of multiplicity in the fixed point of the CQM is possible, the case of \emph{complex} fixed points is not, since the CQM requires one fixed point to be zero, then its complex conjugate ---which should also be a fixed point--- is itself.

\subsection{Regular-reversality and chaos in the canonical quadratic map}

Let us recall, that the definition of $x_1$ transforms, through $T$, the parametric dependence of the original fixed points of the general quadratic map into a single parameter-dependent function $x_1=x_1(\lambda)$. Recall then that,

\[x_1(\lambda)=M(\lambda)\,\left(y_1(\lambda)-y_0(\lambda)\right).\]

Therefore, assuming a constant value of $M(\lambda)=M_0=1$, we see that the value of $x_1$ is exactly the difference between the two original fixed points, $y_1$ and $y_0$, of the General Quadratic Map (which we forced to be real). We can then interpret the found values of the sequence $\{b_k\}_{k\in\mathbb{N}}$ as determining the

\begin{defn}[Stability Bands of a Quadratic Map]
	Let $y_i:\mathcal{A}\subseteq\mathbb{R}\rightarrow\mathbb{R}$, $i\in\{1,\,2\}$ be the two fixed points of the family of quadratic maps $f_{\lambda}$, as given by \ref{def:GQM}, and $\{b_k\}_{k\in\mathbb{N}}$ the sequence of bifurcation values of table \ref{tab:CQM-BifurcationValues}. The union of open intervals
	\begin{equation}
		\left(y_i(\lambda)-b_{k+1},\,y_i(\lambda)-b_k\right)\bigcup\left(y_i(\lambda)+b_k,\,y_i(\lambda)+b_{k+1}\right),\quad\lambda\in\mathcal{A}
	\end{equation}
	is called the $k$-th \emph{stability band} of $y_i$.
\end{defn}

Graphically, we see then that stability bands run along the values of $y_1(\lambda)$ and $y_0(\lambda)$ as functions of the parameter and, as long as the value of one fixed point does not cross a the limit of a band of the other fixed point, there will be no changes in the periodic points structure of the map, i.e. there will only be bifurcations when the value of one fixed point crosses the limit of a stability band of the other fixed point. If $M$ were not held constant, it would simply act as a ``modulator'' for the width of the bands along the parameter $\lambda$.

Stability bands for $x_0=0$ and $x_1(\lambda)$ in the CQM are defined analogously.

\begin{defn}[Stability Bands of the CQM]
	Let $x_1:\mathcal{A}\subseteq\mathbb{R}\rightarrow\mathbb{R}$ be the nonzero fixed point of the family of quadratic maps $g_2$, as given by definition \ref{def:CQM}, and $\{b_k\}_{k\in\mathbb{N}}$ the sequence of bifurcation values of table \ref{tab:CQM-BifurcationValues}. The union of open intervals
	\begin{equation}
		\left(-b_{k+1},\,-b_k\right)\bigcup\left(b_k,\,b_{k+1}\right),\quad\lambda\in\mathcal{A}
	\end{equation}
	is called the $k$-th \emph{stability band} of the CQM.
\end{defn}

Notice that that the part of the $k$-th stability band in the upper (respectively, lower) semi-plane corresponds to the stability condition of period $2^k$ attracting periodic points associated with period doubling bifurcation of the $x_1$ (respectively, zero) fixed point.

Finally, we must remark that the width of these stability bands ---given by the values of the sequence $\{b_k\}_{k\in\mathbb{N}}$--- is not equal for polynomial maps of different degrees, as we shall see in the next chapters.

By the above, the big picture is that depending on the absolute value of $x_1(\lambda)$ the one-parameter family of canonical quadratic maps can present or not attracting periodic points of different periods or even become chaotic. In particular, as long as $\vert x_1(\lambda)\vert<b_N$ we will only have attracting periodic points of period less or equal to $2^N$. We can thus restate the propositions \ref{prop:bk-perpt} and \ref{prop:not-chaotic} given in section \ref{sec:types-perpt-chaos} of chapter \ref{cha:Quadratic}, originally by \cite{Solis2004} (\cite{Solis2004}).

\begin{prop}
	\label{prop:CQM-boundedmax1}
	If the absolute value of the fixed point $x_1^*(\lambda)$ is bounded from above by $b_{N}$, the GQM map $x_{n+1}=g_2(x_n)\equiv x_n - x_n (x_n-x_1^*(\lambda))$ is not chaotic and can only have periodic points of period $2^{m}$ with $m<N$. Moreover, if the bound is given by $b_{\infty}$, the system is still not chaotic and it can only have periodic points of periods of powers of two.
\end{prop}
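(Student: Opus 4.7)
The plan is to adapt Propositions \ref{prop:bk-perpt} and \ref{prop:not-chaotic} from Chapter \ref{cha:Quadratic} to the CQM, making use of the sequence $\{b_k\}$ of Table \ref{tab:CQM-BifurcationValues} and the stability bands of Table \ref{tab:CQM-StabilityConditions}. The topological conjugacy $T$ between $h_2$ and $g_2$, together with Theorem \ref{thm:Conjugacy}, means that the dynamical content of the present result is identical to that of the Solis system $x_{n+1}=x_n^2-b(\lambda)x_n$; the proof therefore amounts to a re-indexing of the original argument in our canonical coordinates.

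First I would fix a parameter value $\lambda$ with $|x_1(\lambda)|<b_N$ and read off Table \ref{tab:CQM-StabilityConditions}: at this $\lambda$, the map $g_2$ lies either below the first bifurcation value $b_1$ (so only the period-$1$ attractor is present) or within one of the stability bands $b_{k-1}<|x_1|<b_k$ for some $k<N$, so the attracting orbit is of period $2^{k-1}$ with $k-1<N$. Thus every attracting periodic orbit has period of the form $2^m$ with $m<N$, and by Singer's Theorem (\ref{thm:Singer}) there are only boundedly many such orbits, since the CQM is a polynomial with a single critical point and has $Sg_2<0$.

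The delicate step is extending this from \emph{attracting} orbits to \emph{all} periodic orbits. Here I would invoke Sarkovskii's Theorem together with the structural fact ---established for $\pm b_1,\pm b_2$ in Propositions \ref{prop:CQM-per2bif-b1} and \ref{prop:CQM-per2bif-b2}, and extended by the same argument scheme for higher $b_k$--- that as $|x_1|$ sweeps from $0$ to $b_\infty$ the only mechanism by which new periodic orbits are born is the period-doubling cascade at the values $b_k$. Consequently, a periodic point whose prime period is not of the form $2^m$ with $m<N$ would, via Sarkovskii's ordering, force existence of further periodic points whose emergence requires $|x_1|>b_N$, contradicting the hypothesis. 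Letting $N\to\infty$ gives the second statement: under the bound $b_\infty$, only periods that are powers of $2$ occur. For the non-chaoticity conclusion I would appeal to Devaney's definition: in the finite-$N$ case the periodic set is a finite union of finite sets and hence cannot be dense, while in the $b_\infty$ case the periodic set is countable with accumulation structure dictated by the Feigenbaum cascade and fails to be topologically transitive on any nontrivial subinterval, ruling out chaos in either case.

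The principal obstacle is the rigorous justification of the claim that only period-doubling bifurcations occur before $b_\infty$ in this family; this is a classical but nontrivial fact, ultimately tied to the monotonicity of the kneading invariant for the quadratic family, and in the scope of this chapter we may take it on the authority of Chapter \ref{cha:Quadratic} and the precedent \cite{Solis2004}, to which the entire statement is equivalent under the conjugacy $T$.
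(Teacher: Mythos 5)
Your argument is essentially the paper's own: the paper offers no independent proof of this proposition, presenting it explicitly as a restatement of Propositions \ref{prop:bk-perpt} and \ref{prop:not-chaotic} (themselves stated without proof on the authority of \cite{Solis2004} and Sarkovskii's theorem) transported into canonical coordinates via the conjugacy $T$. Your write-up follows the same route while filling in more of the supporting detail (stability bands, Singer, Devaney's definition) and correctly isolating the one genuinely nontrivial ingredient --- that only period-doubling bifurcations occur before $b_\infty$ --- which both you and the paper ultimately take on citation rather than prove.
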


And more specifically,

\begin{prop}
	\label{prop:CQM-boundedmax2}
	Suppose that the absolute value of the fixed point $x_1^*(\lambda)$ is bounded. If the supreme of the function $|x_1^*(\lambda)|$ lies within the interval $(b_{n},\, b_{n+1})$, then the map $x_{n+1}=g_2(x_{n})\equiv x_n - x_n (x_n-x_1^*(\lambda))$ only has periodic points of period $2^{k}$ with $k\in\{1,\,2,\,...,\, n\}$ and therefore, it is \emph{not} chaotic.
\end{prop}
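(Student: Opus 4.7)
The plan is to leverage the stability band structure established in the preceding analysis together with Proposition~\ref{prop:CQM-boundedmax1}, which has already done most of the heavy lifting. The central observation is that, by the CQM bifurcation cascade summarized in Table~\ref{tab:CQM-StabilityConditions}, an attracting $2^m$-cycle of $g_2$ exists precisely when $|x_1(\lambda)|$ falls in the $m$-th stability band $(b_m,b_{m+1})$. So the task is essentially to translate the hypothesis $\sup_\lambda |x_1^*(\lambda)|\in(b_n,b_{n+1})$ into the range of bands that $|x_1^*(\lambda)|$ can visit.

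First I would note that the hypothesis $\sup|x_1^*(\lambda)|\in(b_n,b_{n+1})$ implies, in particular, that $|x_1^*(\lambda)|<b_{n+1}$ for every admissible $\lambda$. Proposition~\ref{prop:CQM-boundedmax1} then applies directly with $N=n+1$ and yields that $g_2$ admits only periodic points whose periods are powers of two strictly less than $2^{n+1}$, i.e.\ in $\{2^k : k=0,1,\dots,n\}$, which is exactly the list claimed (the fixed points correspond to $k=0$). To see that the upper index $n$ is actually attained rather than being merely an upper bound, I would use $\sup|x_1^*(\lambda)|>b_n$: by the definition of supremum, there exist parameter values $\lambda$ with $|x_1^*(\lambda)|\in(b_n,b_{n+1})$, placing the orbit in the $n$-th stability band, and thereby producing an attracting $2^n$-cycle via the period-doubling mechanism of Propositions~\ref{prop:CQM-per2bif-b1}--\ref{prop:CQM-per2bif-b2} and their higher-order analogues.

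It remains to deduce non-chaoticity. The quickest route is to observe that the set of periodic points of $g_2$ is contained in $\bigcup_{k=0}^{n} \mathrm{Per}_{2^k}(g_2)$, and each $\mathrm{Per}_{2^k}(g_2)$ is the root set of the polynomial equation $g_2^{2^k}(x)-x=0$ of degree $2^{2^k}$; hence the total set of periodic points is \emph{finite}. A finite set cannot be dense in any nontrivial interval, so Devaney's second condition (density of periodic orbits) fails, and consequently $g_2$ is not chaotic on any nontrivial invariant interval. Alternatively, Sarkovskii's theorem rules out the existence of any prime period that is not a power of two, so the period-3 route to chaos is unavailable.

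I do not expect a serious obstacle here, since the heavy lifting is done by the stability-band picture, the period-doubling theorem, and Proposition~\ref{prop:CQM-boundedmax1}. The only subtle point to be careful about is confirming that ``no attracting $2^m$-cycle for $m>n$'' upgrades to ``no periodic point of period $2^m$ for $m>n$ exists at all''; this is where one must invoke the Feigenbaum structure of the quadratic family, namely that periodic orbits of period $2^{m}$ in $g_2$ are born exclusively through the period-doubling bifurcation at $|x_1|=b_m$ and hence are absent from the phase portrait whenever $|x_1|<b_m$.
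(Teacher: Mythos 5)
The paper never actually proves this proposition: it is presented as a restatement of Proposition~\ref{prop:not-chaotic}, justified only by the phrase ``by the known properties of the sequence $\{b_k\}_{k=0}^{\infty}$ and Sarkovskii's theorem'' and a citation to the earlier work, so any proof you give is necessarily a reconstruction. Your reconstruction is sound and, in fact, supplies more than the paper does. The reduction is the right one: $\sup_\lambda|x_1^*(\lambda)|<b_{n+1}$ lets you invoke Proposition~\ref{prop:CQM-boundedmax1} with $N=n+1$, which already contains the whole content of the claim (both the restriction to periods $2^k$, $k\le n$, and non-chaoticity); the observation that the supremum exceeding $b_n$ forces $|x_1^*(\lambda)|$ to actually enter the $n$-th stability band is correct but strictly speaking unnecessary, since ``only has periodic points of period $2^k$'' is an upper-bound assertion. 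Your finiteness argument for non-chaoticity --- the periodic set is contained in the root sets of finitely many nonconstant polynomials $g_2^{2^k}(x)-x$, hence finite, hence not dense, so Devaney's second condition fails --- is a concrete and correct justification that the paper omits entirely; the Sarkovskii alternative is also consistent with the paper's stated rationale. The one genuinely load-bearing step, which you correctly isolate at the end, is that period-$2^m$ orbits are entirely absent (not merely non-attracting) whenever $|x_1|<b_m$; this is the monotonicity/kneading structure of the quadratic family, and it is exactly the point at which the paper's own justification also bottoms out in an appeal to ``known properties.'' Since Proposition~\ref{prop:CQM-boundedmax1} already encapsulates that fact as a stated result of the paper, relying on it is legitimate, and your proof is acceptable at the paper's own level of rigor.
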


In order for the system $x_{n+1}=x_n-x_n\,\left(x_n-x_1(\lambda)\right)$ to be a regular reversal map for $\lambda$ in some interval $\mathcal{A}=(\lambda_I,\,\lambda_F)$, it is necessary that the continuous function $x_1(\lambda)$ has the following property: there exist $\lambda_{1}<\lambda_{2}<\lambda_{3}$ in $\mathcal{A}$ such that $x_1(\lambda_{1})<x_1(\lambda_{3})=b_{i}<x_1(\lambda_{2})$ for some $i\in\mathbb{N}$ where $b_{i}$ is the $i$-th element of the sequence of bifurcation values $\{b_{k}\}_{k=0}^{\infty}$. It is easy to construct functions $x_1(\lambda)$ with the aforementioned property, which means that constructing regular-reversal canonical quadratic maps is also easy. Furthermore, knowing beforehand the properties of a desired bifurcation diagram, i.e. the fixed points, the $\lambda$ values for desired bifurcations, etc. we can construct $x_1(\lambda)$ functions that accomplish the desired properties in the bifurcation diagram.

\section{Quadratic examples with the canonical map}

Recall that the CQM is

\[g_2(x,\lambda)=x-x\left(x-x_1(\lambda)\right),\]

where
\begin{equation}
	\label{eq:CQM-x1def}
	x_1(\lambda)=M(\lambda)\left(y_1(\lambda)-y_0(\lambda)\right).
\end{equation}

Therefore, we can ``parametrize'' the map $g_2(x,\lambda)$ by defining either $x_1(\lambda)$ directly or going back to the parametric dependence of the original LFFQM $h_2(x,\lambda)$ from which $x_1$ was defined using the transformation $T$, i.e. defining $M(\lambda)$, $y_0(\lambda)$ and $y_1(\lambda)$. We will look at both interpretations at the same time in the following examples.

\subsection{Linear fixed points}
\label{ex:linearfps-CQM}

We will begin with the simplest case: linearly varying $y_0(\lambda)$ and $y_1(\lambda)$ and $M(\lambda)=1$. Consider,

\[y_0(\lambda):=\lambda, \quad y_1(\lambda):=2\,\lambda.\]

The graphs of these parametrizations of the fixed points are shown in figure \ref{fig:liny0liny1}. Clearly, $x_1(\lambda)=\lambda$ and its graph is also shown in figure \ref{fig:liny0liny1}.

\begin{figure}
	\centering
	\subfloat[Graphs of linear $y_0(\lambda)$ and $y_1(\lambda)$ along the selected values of $\lambda$. Also, the stability conditions (constrained by the bifurcation values $b_1$, $b_2$ and $b_{\infty}$) of $y_0$ are shown around it.]
	{
		\centering
		\includegraphics[width=0.45\columnwidth]{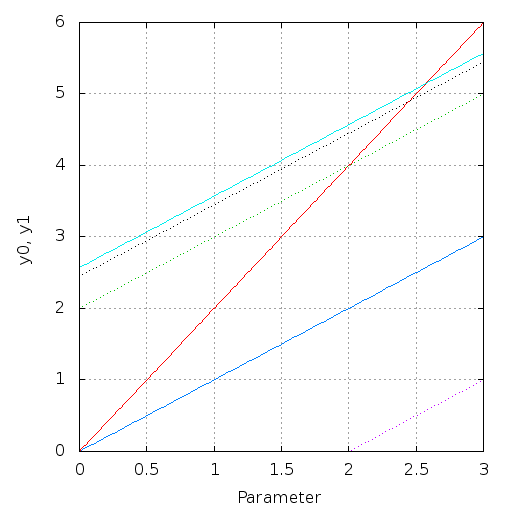}
	}
	\subfloat[Graph of corresponding $x_1(\lambda)$ along with the bifurcation values $b_1$, $b_2$ and $b_{\infty}$.]
	{
		\centering
		\includegraphics[width=0.45\columnwidth]{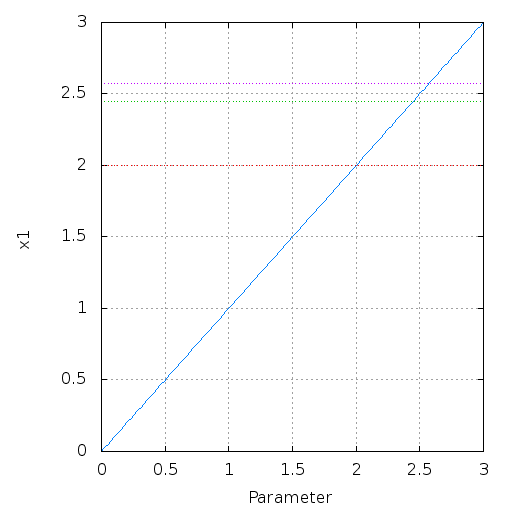}
	}
	\caption{Linear parametrization of fixed points (example \ref{ex:linearfps-CQM}).}
	\label{fig:liny0liny1}
\end{figure}

The bifurcation diagram for this specific parametrization of the fixed points of the CQM is shown in figure \ref{fig:dbif_liny0liny1}. In this bifurcation diagram we can see that it is precisely at the values of $\lambda$ for which $y_1$ crosses limits of the stability conditions of $y_0$ (or, equivalently, where $x_1$ crosses the bifurcation values) that bifurcations take place. Notice that, since $x_1>0$, it is $x_1$ the fixed point that undergoes period doubling bifurcation.

\begin{figure}
	\centering
	\includegraphics[width=0.67\columnwidth]{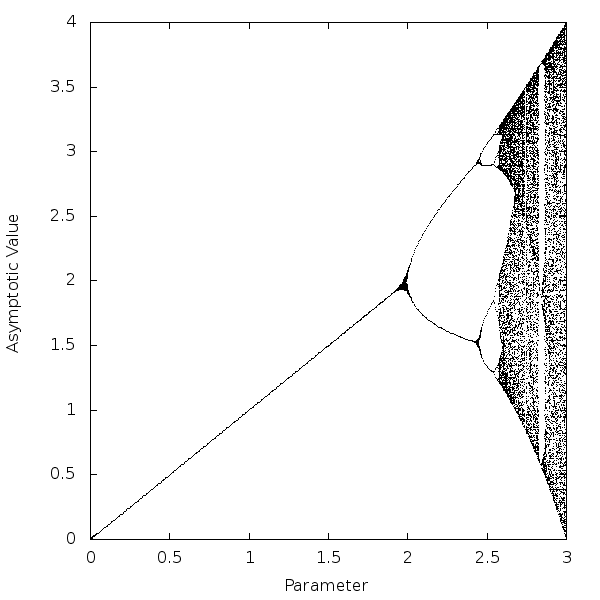}
	\caption{Bifurcation diagram for the linear parametrization of the fixed points example.}
	\label{fig:dbif_liny0liny1}
\end{figure}

In fact, it does not matter what the actual values of the slopes of $y_0(\lambda)$ and $y_1(\lambda)$ are: as long as their difference is the same (has the same slope) they will produce exactly the same bifurcation diagram, since the parametric dependence of $x_1(\lambda)$ will be unchanged (see eq. \ref{eq:CQM-x1def}). Making precise calculations of the bifurcations diagram near $\lambda$-values of interest we can determine numerically the rest (or at least some more) of the bifurcation values forming the sequence $\{b_k\}_{k\in\mathbb{N}}$, but this is best accomplished using the parametrization of example \ref{ex:expfp-CQM} below.

\FloatBarrier

\subsection{Quadratic fixed point}
\label{ex:quadfp-CQM}

Redefining the parametric dependencies of $y_0$ and $y_1$ we can achieve \emph{regular-reversal maps} as described in definition \ref{def:regular-reversal} in page \pageref{def:regular-reversal}. For example, leaving $y_0$ in the above example \ref{ex:linearfps-CQM}, but redefining $y_1$ with a quadratic parametric dependence on $\lambda$ as

\begin{equation}
	\begin{aligned}
		y_0(\lambda) &:=\lambda\\
		y_1(\lambda) &:=\lambda-p_1\lambda(\lambda-p_2),\quad p_1,\,p_2\in\mathbb{R}.
	\end{aligned}
	\label{eq:QuadraticDependece}
\end{equation}

We immediately see that zero is a root of the parametric form of $y_1$, and the other root is $p_2$. Since it is exactly like the defined \emph{Linear Factors Form} defined for the quadratic map in subsection \ref{sub:LinearFactorsForm} on page \pageref{sub:LinearFactorsForm}, we know that its maximum lies at

\[\lambda_c=\frac{p_1\,p_2+1}{2\,p_1},\]

with corresponding value

\[y_1^{max}=\lambda_c^2=\frac{(p_1\,p_2+1)^2}{4\,p_1^2}.\]

Using, for example, the values, $p_1=1$ and $p_2=3$ we know then that $\lambda_c=2$ and, therefore, $y_1^{max}=4$ (this functions are shown in figure \ref{fig:liny0quady1}a. As for $x_1$ it is then

\[x_1(\lambda)=\lambda (3 - \lambda),\]

where the maximum is clearly located at $\lambda=3/2$ with a value of $x_1^{max}=9/4$. The graph of $x_1$ is shown in figure \ref{fig:liny0liny1}b. Also, $x_1$ crosses the first bifurcation value ($b_1=2$) two times at $\lambda=1$ and $\lambda=2$, and it is easy to see that $\dot{x_1}(\lambda)\neq0$ for those values of $\lambda$, so that by proposition \ref{prop:CQM-per2bif-b1}, we expect period doubling bifurcations at this values. Also, since $x_1^{max}=2.25<2.45\approx b_2$, according to proposition \ref{prop:CQM-boundedmax2} we know that we will only get one such bifurcation leading to stable 2-cycles in the interval $(1,\,2)$, and then going back to a unique stable fixed point (i.e. $x_1$ stays in the 1-stability band in the interval $(1,\,2)$). Thus, this example complies with our definition of a \emph{regular-reversal map}. The latter predictions are confirmed in the bifurcation diagram shown in figure \ref{fig:dbif_liny0quady1}.

\begin{figure}
	\centering
	\subfloat[Graphs of linear $y_0(\lambda)$ and quadratic $y_1(\lambda)$ along the selected values of $\lambda$. Also, the stability bands ($b_1$, $b_2$ and $b_{\infty}$) of $y_0$ are shown around it.]
	{
		\centering
		\includegraphics[width=0.45\columnwidth]{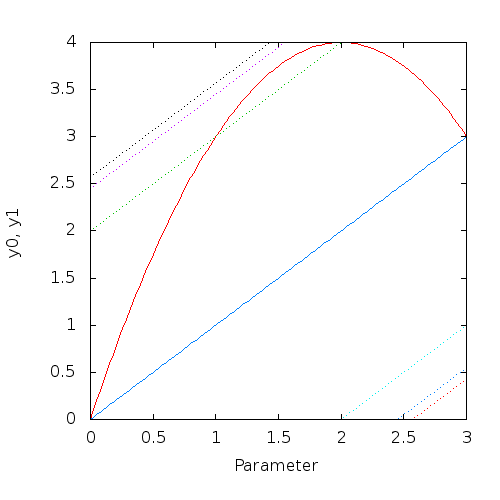}
	}
	\subfloat[Graph of corresponding $x_1(\lambda)$ along with its stability bands.]
	{
		\centering
		\includegraphics[width=0.45\columnwidth]{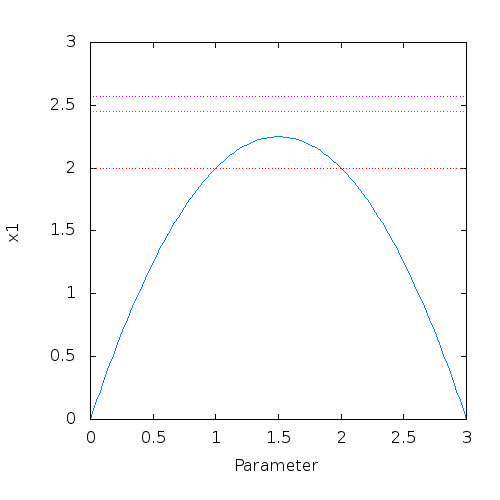}
	}
	\caption{Quadratic parametrization of fixed points with $p_1=1$ and $p_2=3$.}
	\label{fig:liny0quady1}
\end{figure}

\begin{figure}
	\centering
	\includegraphics[width=0.45\columnwidth]{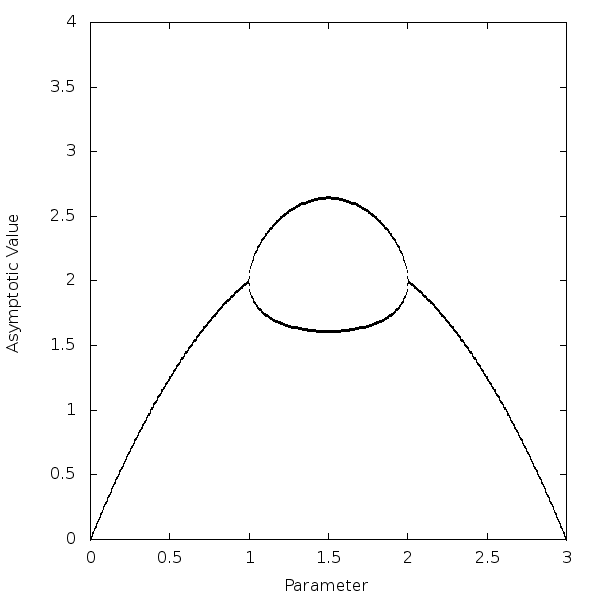}
	\caption{Bifurcation diagram for the quadratic parametrization of the fixed points with $p_1=1$ and $p_2=3$..}
	\label{fig:dbif_liny0quady1}
\end{figure}

Now, increasing the maximum value of $x_1$ we can accomplish more bifurcations in the diagram. This can be achieved by increasing the slope of $y_1$ to $p_1=1.1422$, since for this value $x_0^{max}\approx b_{\infty}$. The corresponding graph of $x_1$ is shown in figure \ref{fig:liny0quady1b}. And, also, the respective bifurcation diagram is shown in figure \ref{fig:dbif_liny0quady1b}, where we can see that the system undergoes a cascade of period doubling bifurcations in the central part. We thus produce another regular-reversal map with more nested regions of different types.

\begin{figure}
	\centering
	\subfloat[Graph of corresponding $x_1$. In this case $x_1^{max}\approx b_{\infty}$. Also, the stability bands ($b_1$, $b_2$ and $b_{\infty}$) of $x_0$ are shown above it.]
	{
		\centering
		\includegraphics[width=0.45\columnwidth]{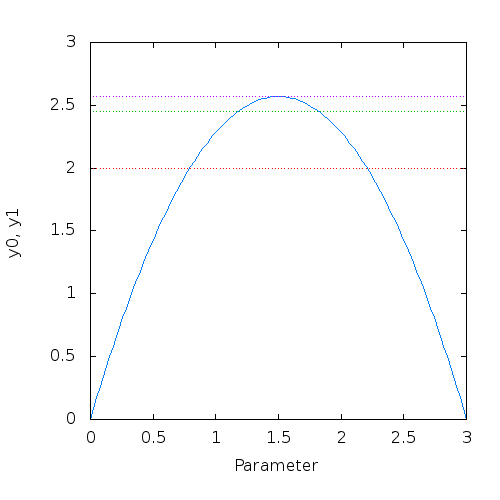}
		\label{fig:liny0quady1b}
	}
	\subfloat[Bifurcation diagram. We have set here $x_1^{max}\approx b_{\infty}$.]
	{
		\centering
		\includegraphics[width=0.45\columnwidth]{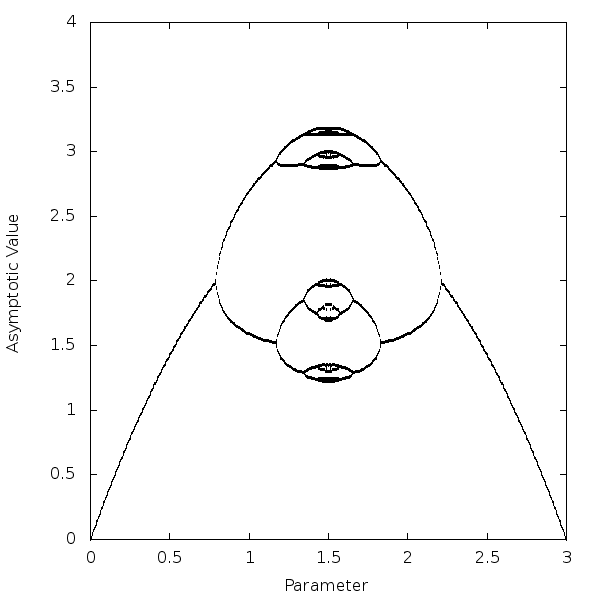}
		\label{fig:dbif_liny0quady1b}
	}
	\caption{Quadratic parametrization of the fixed points with $p_1=1.1422$ and $p_2=3$.}
\end{figure}

If we continue increasing the maximum value of $x_1$ we can achieve a chaotic region within the regular-reversal map. Any value above the used last above will suffice. Say we take $p_1=1.2$. The graph of $x_1$ for this case is shown in figure \ref{fig:liny0quady1c} and we then get the bifurcation diagram from figure \ref{fig:dbif_liny0quady1c}, which displays a chaotic region in the central part, as stated. This is still a regular-reversal map.

\begin{figure}
	\centering
	\includegraphics[width=0.45\columnwidth]{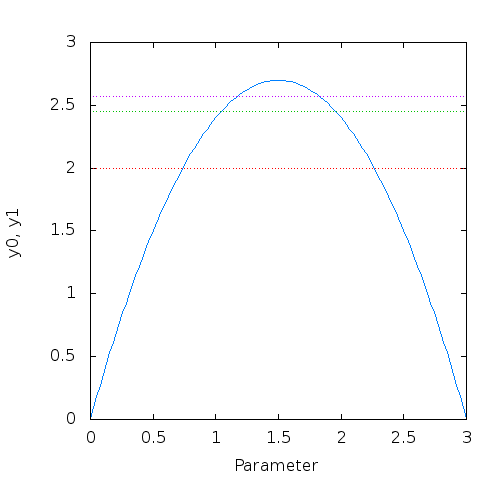}
	\caption{Graph of corresponding $x_1$ for linear $y_0(\lambda)$ and quadratic $y_1(\lambda)$ with $p_1=1.2$ and $p_2=3$. The maximum value of $x_1(\lambda)$ is set to be slightly above $b_{\infty}$. Also, the bifurcation values $b_1$, $b_2$ and $b_{\infty}$ are shown.}
	\label{fig:liny0quady1c}
\end{figure}

\begin{figure}
	\centering
	\includegraphics[width=0.67\columnwidth]{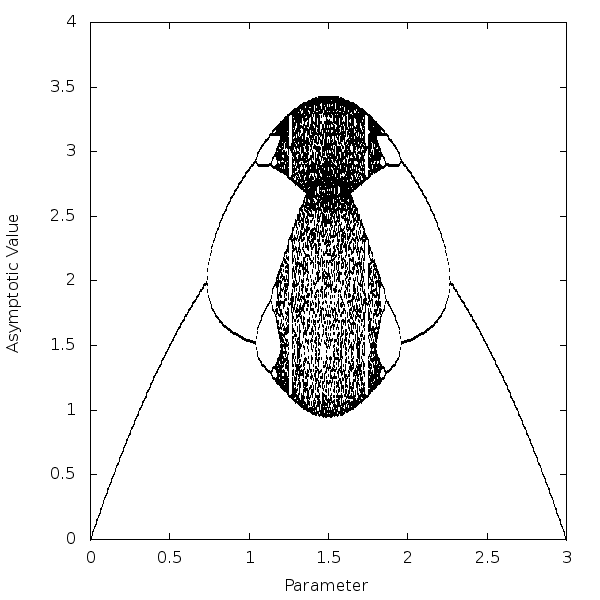}
	\caption{Bifurcation diagram for the quadratic parametrization of the fixed points with $p_1=1.2$ and $p_2=3$. We have set here $x_1^{max}>b_{\infty}$.}
	\label{fig:dbif_liny0quady1c}
\end{figure}

\FloatBarrier

\subsection{Sinusoidal fixed point}
\label{ex:sinfp-CQM}

We can construct a periodic regular-reversal map by making one of the fixed points have a sinusoidal parametric dependence on $\lambda$. That is, we leave $y_0$ as before and define

\begin{equation}
	y_1(\lambda):= \lambda + a\,(1+\sin{\pi\,\lambda}),
	\label{eq:y1sinusoidal}
\end{equation}

where we can tweak the sinusoidal form of $y_1$ by means of the constant $a$ which is just an amplitude (and translation, at the same time). Choosing $a=1.3\gtrsim \frac{b_{\infty}}{2}$ we can get a periodic regular-reversal map with chaotic regions in the periodic central regions. The graphs of the roots are shown in figure \ref{fig:liny0siny1} and the bifurcation diagram is shown in figure \ref{fig:dbif_liny0siny1}.

\begin{figure}
	\centering
	\subfloat[Graphs of linear $y_0(\lambda)$ and sinusoidal $y_1(\lambda)$ along the selected values of $\lambda$. Also, the stability bands ($b_1$, $b_2$ and $b_{\infty}$) of $y_0$ are shown around it.]
	{
		\centering
		\includegraphics[width=0.45\columnwidth]{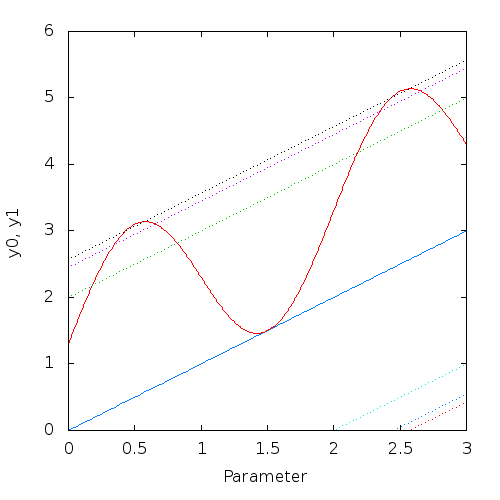}
	}
	\subfloat[Graph of corresponding $x_1(\lambda)$ along with its stability bands.]
	{
		\centering
		\includegraphics[width=0.45\columnwidth]{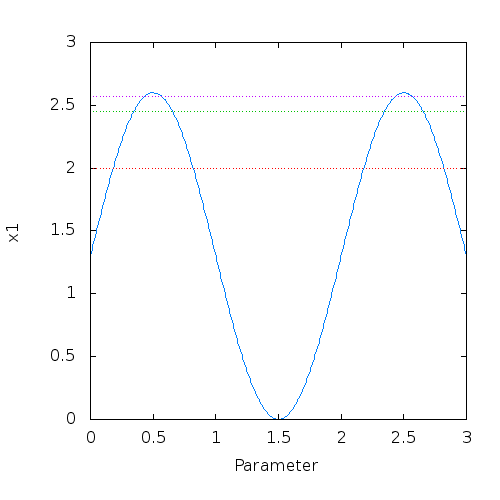}
	}
	\caption{Sinusoidal parametrization of fixed points.}
	\label{fig:liny0siny1}
\end{figure}

\begin{figure}
	\centering
	\includegraphics[width=0.45\columnwidth]{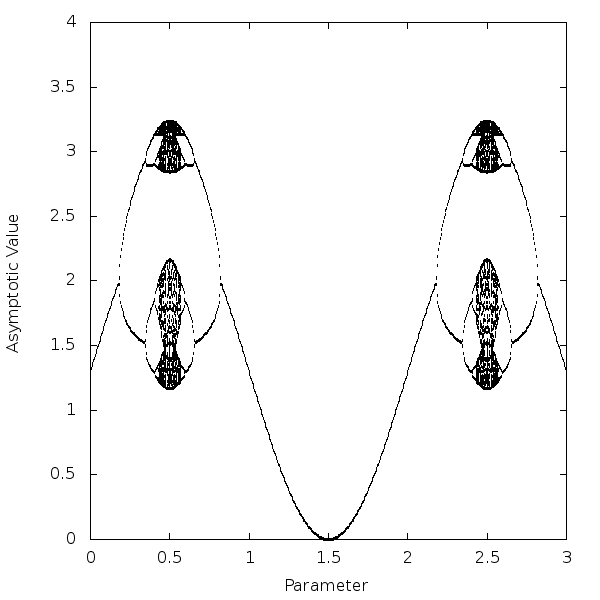}
	\caption{Bifurcation diagram for the sinusoidal parametrization of the fixed points example.}
	\label{fig:dbif_liny0siny1}
\end{figure}

\FloatBarrier

\subsection{When zero loses its stability}

It is important to point out that if $x_1$ becomes negative, i.e. the difference between $y_1$ and $y_0$ becomes negative (assuming again $M=1$), then we enter the stability region of the fixed point at zero, $x_0$,  in the canonical quadratic map (and $y_0$ in the general quadratic map). Thus decreasing values of $x_1$ will cause bifurcations to appear this time from the zero fixed point, as the stability bands are crossed by $x_1$ (correspondingly, $y_1$ in the general quadratic map). We can achieve the latter by defining again a linear $y_1$ as

\[y_1(\lambda):= p_1 + p_2\,\lambda\]

with, say $p_1=3$ and $p_2=-1$. Then we get $y_1$ to cross through \emph{all} the stability bands of $y_0$ in the interval $(0,\,3)$ of the parameter $\lambda$. This is shown graphically in figure \ref{fig:liny0liny1d} along with its corresponding graph of $x_1$.

The bifurcation diagram in this case starts, from left to right, being chaotic, then reversal, then switching to stability of $x_0$ (correspondingly, $y_0$) and then being regular until chaos is reached again, as $x_1$ crosses the stability bands. This is shown in figure \ref{fig:dbif_liny0liny1d}.
\begin{figure}
	\centering
	\subfloat[Graphs of linear $y_0(\lambda)$ and linear $y_1(\lambda)$ crossing all stability bands.]
	{
		\centering
		\includegraphics[width=0.45\columnwidth]{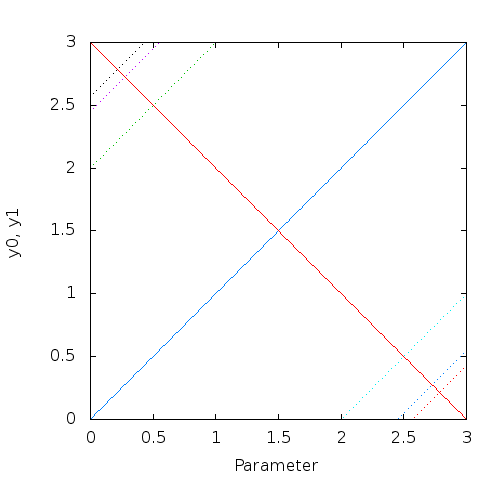}
	}
	%\hfill
	\subfloat[Graph of corresponding $x_1(\lambda)$ crossing all stability bands.]
	{
		\centering
		\includegraphics[width=0.45\columnwidth]{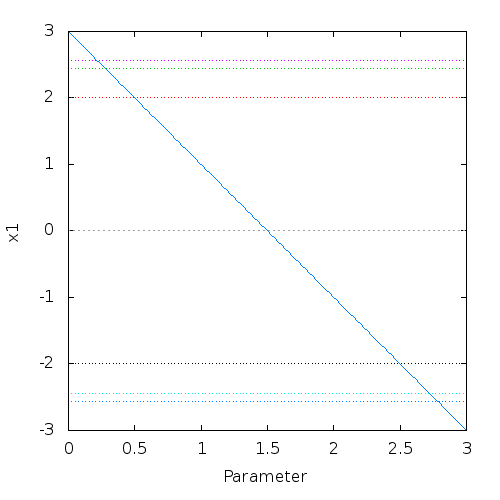}
	}
	\caption{Linear parametrization of fixed points crossing through all stability bands.}
	\label{fig:liny0liny1d}
\end{figure}

\begin{figure}
	\centering
	\includegraphics[width=0.67\columnwidth]{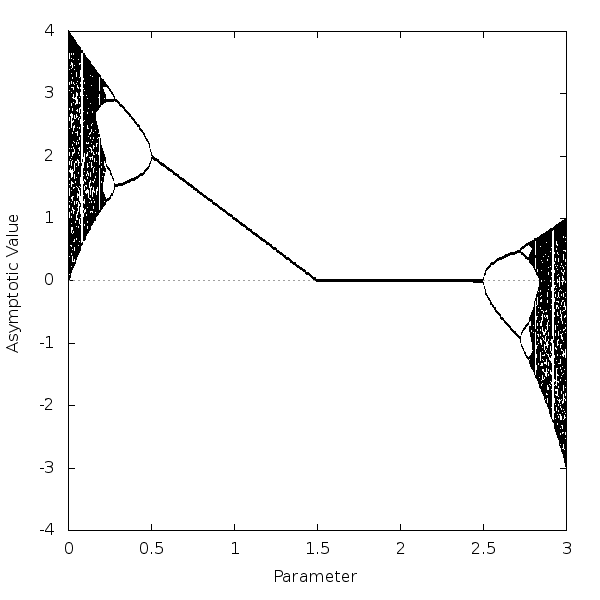}
	\caption{Bifurcation diagram for the linear parametrization of the fixed points example that crosses through all stability bands.}
	\label{fig:dbif_liny0liny1d}
\end{figure}

\FloatBarrier

\subsection{Asymptotic fixed point}
\label{ex:expfp-CQM}

Since we know that the onset of chaos occurs at $b_{\infty}\approx 2.57$ (as calculated through approximation formula \ref{eq:Bapprox}), we can force the moving $x_1$ fixed point of the CQM to asymptotically approach this value in order to visualize more clearly the bifurcation process that takes place very near this value. Now, for symmetry reasons, it will be best if we make the $x_0=0$ fixed point to be the one that bifurcates, instead of $x_1$; we do this by making $x_1\rightarrow -b_{\infty}$ (remember $(-b_{\infty},\,0)$ is the interval where the period doubling bifurcations cascade occurs for $x_0=0$). So, we define

\begin{equation}
	x_1(\lambda):= (-1)^{p_1} b_{\infty} + (-1)^{p_1+1} \exp(-p_2\,\lambda),
	\label{eq:x1exp}
\end{equation}

where $p_1\in\{0,\,1\}$ and $p_2>0$. We choose for this example, for the above reasons, $p_1=p_2=1$.  The corresponding bifurcation diagram for this case is shown in figure \ref{fig:dbif_expx1_pdf-fps}, upper panel, where we can clearly see the bifurcations take place up to the attracting periodic point of period $2^6$ (64-cycle); the lower panel in the same figure shows the value of the fixed points, which shows the asymptotically varying $x_1$. The scale of the parameter value is the same in both panels to allow for comparison; notice that, when the fixed point curve crosses the marked $b_k$ values, period doubling bifurcation takes place in the bifurcation diagram, as expected. By making precise calculations based on this diagrams we could determine the bifurcation values up to an arbitrary $2^N$-cycle, as we did for up to $N=7$ in this work.

\begin{figure}
	\centering
	\includegraphics[width=0.9\columnwidth]{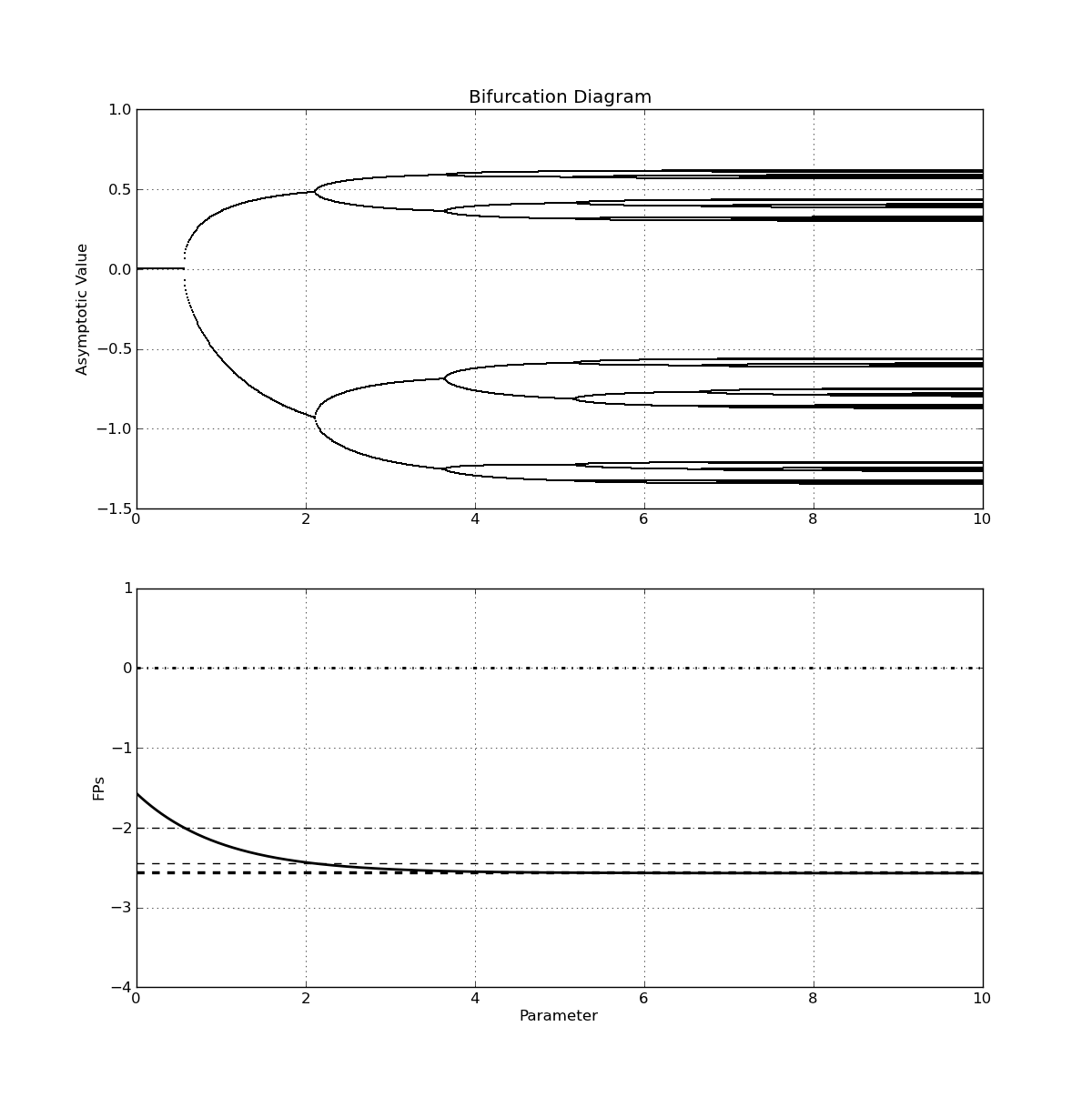}
	\caption{Bifurcation diagram (upper) and fixed points plot with stability bands for the exponential parametrization of the fixed point example (lower). In the lower panel, we have set $x_1\rightarrow -b_{\infty}$ (continuous thick line), the zero fixed point is marked by a dash-dotted line and the bifurcation values $-b_1,\,-b_2,...,\,-b_5$ are also shown as dashed lines.}
	\label{fig:dbif_expx1_pdf-fps}
\end{figure}

\chapter{Cubic maps}
\label{cha:Cubic}

As stated before, in chapter \ref{cha:QuadRev} we furthered the results of chapter \ref{cha:Quadratic}, mainly based on the work of \cite{Solis2004}, and restated them in the frame of a new formulation that would allow for generalization. Well, the first such generalization comes precisely for third degree polynomial maps, i.e. cubic polynomials.

\section{General Cubic Map}
\label{sec:GeneralCubicMap}

Consider a cubic map (iteration function) in its general form, as stated by the

\begin{defn}[General Cubic Map]
	The \emph{General Cubic Map} is defined by
	\begin{equation}
		f_3(y):=y+P_{f_3}(y),
		\label{eq:GCM}
	\end{equation}
	where
	\begin{equation}
		\label{eq:GCM-fppoly}
		P_{f_3}(y)=\alpha + \beta\,y + \gamma\,y^2 + \delta\,y^3
	\end{equation}
	is called the \emph{Fixed Points Polynomial} of $f_3$. All the coefficients $\alpha,\,\beta,\,\gamma$ and $\delta$ are functions of the parameter $\lambda$.
\end{defn}

It is evident that any cubic map can be put in this form by adjusting the corresponding values of the coefficients in the fixed points polynomial. By the fundamental theorem of algebra, we know that \eqref{eq:GCM-fppoly} has three roots, by which the GCM has three fixed points. The roots of $P_{f_3}$ are then \cite{MathHandbook}

\begin{equation}
	\begin{aligned}
		y_0 &= S+T-\frac{1}{3}\tilde{\gamma}\\
		y_1 &= -\frac{1}{2}(S+T)-\frac{1}{3}\tilde{\gamma}+\frac{1}{2}i\,\sqrt{3}(S-T)\\
		y_2 &= -\frac{1}{2}(S+T)-\frac{1}{3}\tilde{\gamma}-\frac{1}{2}i\,\sqrt{3}(S-T)
	\end{aligned}
\end{equation}

where $\tilde{\alpha}=\alpha/\delta$, $\tilde{\beta}=\beta/\delta$, $\tilde{\gamma}=\gamma/\delta$, and

\begin{equation}
	\begin{aligned}
		Q&=\frac{3\tilde{\beta}-\tilde{\gamma}^2}{9}, & R&=\frac{9\tilde{\beta}\tilde{\gamma}-27\tilde{\alpha}-2\tilde{\gamma}^3}{54},\\
		D&=Q^3+R^2, & &\\
		S&=\sqrt[3]{R+\sqrt{D}}, &  T&=\sqrt[3]{R-\sqrt{D}}.
	\end{aligned}
\end{equation}

The coefficients of eq. \eqref{eq:GCM-fppoly} and its fixed points are related by

\begin{equation}
	\begin{aligned}
		y_0+y_1+y_2&=-\tilde{\gamma}, & y_0y_1+y_1y_2+y_2y_0&=\tilde{\beta}, & y_0y_1y_2&=-\tilde{\alpha}.
	\end{aligned}
\end{equation}

$D$ is called the discriminant and we have three cases:

\begin{itemize}
	\item If $D>0$ then one fixed point is real and the other two are complex conjugates.
	\item If $D=0$ then the three fixed points are real with at least two of them equal.
	\item If $D<0$ then all fixed points are real and distinct.
\end{itemize}

It is the last two cases (real fixed points) that will interest us most for the time being. Suppose in particular that $D<0$. Then, we can write \cite{MathHandbook}

\begin{equation}
	\begin{aligned}
		y_0 &= 2\sqrt{-Q}\,\cos\left(\frac{\theta}{3}\right)-\frac{1}{3}\tilde{\beta},\\
		y_1 &= 2\sqrt{-Q}\,\cos\left(\frac{\theta+\pi}{3}\right)-\frac{1}{3}\tilde{\beta},\\
		y_2 &= 2\sqrt{-Q}\,\cos\left(\frac{\theta+2\pi}{3}\right)-\frac{1}{3}\tilde{\beta},\\
	\end{aligned}
\end{equation}

where $\cos\theta=R/\sqrt{-Q^3}$.

\section{Linear Factors Product Form}
\label{sec:LinearFactorsCM}

Since we know that \eqref{eq:GCM-fppoly} has three roots, by the factor theorem we can rewrite $P_{f_3}$ as

\begin{equation}
	\begin{aligned}
		P_{f_3}(y) 	&= M(y-y_0)(y-y_1)(y-y_2),\\
		&= s\tilde{M}(y-y_0)(y-y_1)(y-y_2)
	\end{aligned}
\end{equation}

where $s=\mathrm{sign}(M)$, $\tilde{M}=|M|$; and then rewrite \eqref{eq:GCM} as

\begin{defn}[Linear Factors Form of the Cubic Map]
	\label{def:LFFCM}
	Let $f_3$ be a general cubic map with three fixed points, $y_0$, $y_1$ and $y_2\in\mathbb{C}$. We can write $f_3$ as
	\begin{equation}
		\label{eq:LinearFactorsCM}
		\begin{aligned}
			h_3(y)	&= y + M(y-y_0)(y-y_1)(y-y_2)\\
			&= y + s\tilde{M}(y-y_0)(y-y_1)(y-y_2).
		\end{aligned}
	\end{equation}
	where all $M,\,y_0,\,y_1$ and $y_2$ are functions of the parameter $\lambda$. We call $h_3$ the \emph{Linear Factors Form of the cubic map (LFFCM)}.
\end{defn}

The LFFCM has the derivatives

\begin{equation}
	\begin{aligned}
		h'(y) &= 1 + M \left[ (y-y_1)(y-y_2) + (y-y_0)(y-y_2) + (y-y_0)(y-y_1) \right],\\
		h''(y) &= -2\,M\,\left( y_0 + y_1 + y_2 -3\,y\right).
	\end{aligned}
\end{equation}

By equating the first derivative to zero we can obtain the two critical points

\begin{equation}
	\begin{aligned}
		y_1^c &=\frac{1}{3}\left[(y_0+y_1+y_2)+\sqrt{\frac{1}{2}\left[(y_1-y_0)^2+(y_2-y_1)^2+(y_2-y_0)^2\right]-3/M}\right],\\
		y_1^c &=\frac{1}{3}\left[(y_0+y_1+y_2)-\sqrt{\frac{1}{2}\left[(y_1-y_0)^2+(y_2-y_1)^2+(y_2-y_0)^2\right]-3/M}\right]
	\end{aligned}
\end{equation}

and, evaluating these critical points in $h''$ we have

\begin{equation}
	\begin{aligned}
		h''(y_1^c) &= 2M\sqrt{\frac{1}{2}\left[(y_1-y_0)^2+(y_2-y_1)^2+(y_2-y_0)^2\right]-3/M},\\
		h''(y_2^c) &= -2M\sqrt{\frac{1}{2}\left[(y_1-y_0)^2+(y_2-y_1)^2+(y_2-y_0)^2\right]-3/M}= -h''(y_1^c),
	\end{aligned}
\end{equation}

so that necessarily one critical point is a maximum and the other one is a minimum, unless they are both equal, in which case we have a saddle point.

\section{Canonical Cubic Map}
\label{sec:CanonicalCubicMap}

As in the case for the Canonical Quadratic Map, we will apply a linear transformation to \eqref{eq:LinearFactorsCM} so that one fixed point is mapped to zero and the ``amplitude'' of the linear factors term is unity. We know that, since $f_3$ is cubic, at least one fixed point is real, so we can map this fixed point to zero. This can be accomplished by one of the following transformations

\begin{equation}
	\begin{aligned}
		T_0(x)&=y_0\pm s\tilde{M}^{-1/2}x, & T_1(x)&=y_1\pm s\tilde{M}^{-1/2}x, & T_2(x)&=y_2\pm s\tilde{M}^{-1/2}x,
	\end{aligned}
\end{equation}

by taking $y=T_k(x),\,k\in\{0,\,1,\,2\}$ if $y_k$ is real. Without loss of generality, we will assume $y_0$ is real and apply $T_0$ with the plus sign calling it simply $T$, so that, once the calculations are done, we get

\begin{defn}[Canonical Cubic Map]
	\label{def:CCM}
	The \emph{Canonical Cubic Map (CCM)} is defined by
	\begin{equation}
		g_3(x;\lambda)=x+sx(x-x_1(\lambda))(x-x_2(\lambda)),
		\label{eq:CanonicalCubicMap}
	\end{equation}
	where it has been stressed out that both fixed points $x_1$ and $x_2$ depend upon the parameter $\lambda$.
\end{defn}

So that, if $M>0$ then

\begin{equation}
	g_3(x)=x+x(x-x_1)(x-x_2),
\end{equation}

and if $M<0$

\begin{equation}
	g_3(x)=x-x(x-x_1)(x-x_2),
\end{equation}

From the easily verifiable calculations, it can be shown that the relation between the roots of the linear factors form of the cubic map and the canonical cubic map is given by

\begin{lem}
	The fixed points of the linear factors form of the cubic map and the canonical cubic map are related by
	\begin{equation}
		\begin{aligned}
			x_1(\lambda)&=s\tilde{M}^{1/2}\left[y_1(\lambda)-y_0(\lambda)\right],
			& x_2(\lambda)&=s\tilde{M}^{1/2}\left[y_2(\lambda)-y_0(\lambda)\right].
		\end{aligned}
	\end{equation}
\end{lem}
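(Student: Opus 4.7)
The plan is to exploit the fact that the Canonical Cubic Map was obtained from the LFFCM by the explicit linear change of variable $y = T(x) = y_0 + s\tilde{M}^{-1/2}x$ introduced in Section \ref{sec:CanonicalCubicMap}. The lemma then amounts to reading off the images of the three roots $y_0,\,y_1,\,y_2$ of $P_{f_3}$ under the inverse map $T^{-1}$, together with a verification that the transformed system is indeed the $g_3$ of Definition \ref{def:CCM}.

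First, I would invert $T$. Since $T$ is linear in $x$, solving $y = y_0 + s\tilde{M}^{-1/2}x$ and using $1/s=s$ gives at once
\[T^{-1}(y) = s\tilde{M}^{1/2}(y-y_0).\]
Second, I would invoke the elementary observation that under any bijection $T$ the fixed points of $T^{-1}\circ h_3\circ T$ are precisely the images $T^{-1}(p)$ of the fixed points $p$ of $h_3$. Applying this with $p\in\{y_0,y_1,y_2\}$ yields $T^{-1}(y_0)=0$, $T^{-1}(y_1)=s\tilde{M}^{1/2}(y_1-y_0)$ and $T^{-1}(y_2)=s\tilde{M}^{1/2}(y_2-y_0)$, which, modulo identification with $x_1(\lambda)$ and $x_2(\lambda)$, are exactly the formulas claimed.

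To close the argument, I would verify that $T^{-1}\circ h_3\circ T$ really coincides with the map $g_3$ in \eqref{eq:CanonicalCubicMap}, so that the $x_i$ defined via $T^{-1}(y_i)$ are the correct fixed points to identify. Substituting $y=T(x)$ into each linear factor of $h_3$ produces
\[y-y_j \;=\; s\tilde{M}^{-1/2}\bigl(x - s\tilde{M}^{1/2}(y_j-y_0)\bigr),\qquad j=0,1,2,\]
so the triple product $(y-y_0)(y-y_1)(y-y_2)$ contributes the scalar $s^3\tilde{M}^{-3/2}$ times $x(x-x_1)(x-x_2)$. Combined with the outer coefficient $s\tilde{M}$ from $h_3$ and the final rescaling by $s\tilde{M}^{1/2}$ coming from $T^{-1}$, the scalar factor collapses, using $s^4=1$, to exactly $s$, which is the prefactor of the cubic term in \eqref{eq:CanonicalCubicMap}.

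The only real obstacle is bookkeeping: keeping track of the half-integer powers of $\tilde{M}$ and ensuring every factor of the sign $s$ is placed correctly, so that the canonical form ends up with the single sign $s$ multiplying $x(x-x_1)(x-x_2)$ and with the rescaling consistent with the convention chosen in Section \ref{sec:CanonicalCubicMap}. No analytical difficulty is expected beyond this algebraic verification; the lemma is, in essence, an instance of the general principle that conjugation by a linear map transports fixed points by that same linear map.
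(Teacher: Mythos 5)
Your proof is correct and follows essentially the same route as the paper: the paper states this lemma as following from "easily verifiable calculations" after introducing $T$, and the substitution $y=T(x)$ with the bookkeeping of the powers of $\tilde{M}$ and of $s$ (collapsing $s^5$ to $s$) is exactly the computation the paper carries out in general form in the $T_n$-conjugacy proposition of the $n$-th degree chapter. Your additional remark that conjugation by a bijection transports fixed points is the correct justification for identifying $x_i=T^{-1}(y_i)$, and nothing is missing.
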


We have then again reduced the parametric dependence to only two functions of the parameter $\lambda$: $x_1$ and $x_2$. Notice $T$ is a homeomorphism between the \emph{domains} of both maps; this will help us in chapter \ref{cha:Generalization} to prove that the Linear Factors Form and the Canonical Form of polynomial maps are actually topologically conjugate, which in turn means that the stability and chaos properties are preserved between the maps, which allows us to determine stability properties for any cubic map by analyzing only the CCM.

Now, one first immediate result is

\begin{prop}
	The Canonical Cubic Map has three roots:
	\begin{equation}
		\begin{aligned}
			x_0^r&=0, & x_1^r&=\frac{x_1+x_2+s\sqrt{(x_2-x_1)^2-4s}}{2}, & x_2^r=\frac{x_1+x_2-s\sqrt{(x_2-x_1)^2-4s}}{2}.
		\end{aligned}
	\end{equation}
	Moreover, if $M>0$
	\begin{itemize}
		\item If $x_2\neq-x_1$ and $x_2-x_1=\pm2$, we will have a single root different from zero with multiplicity of two, and equal to $(x_1+x_2)/2$, i.e. the midpoint between the two non-zero fixed points.
		\item If $x_2=-x_1$ and $x_2-x_1=\pm2$, zero will be the only root, with multiplicity of three.
		\item If $\vert x_2-x_1\vert>2$, we will have two real and distinct non-zero roots.
		\item If $\vert x_2-x_1\vert<2$, zero will be the only real root of the CCM and the other two roots will be complex conjugates.
	\end{itemize}
	And if $M<0$ the three roots are always real and distinct, except when $x_1=x_2=\pm1$ when $x_0^r=0$ has multiplicity of two.
\end{prop}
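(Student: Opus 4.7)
The plan is to compute the roots of $g_3$ directly by factoring $g_3(x)=0$ and reducing what remains to a quadratic whose coefficients depend on $x_1$, $x_2$, and $s$. From the defining equation $x + sx(x-x_1)(x-x_2) = 0$, factoring out $x$ gives
\begin{equation*}
x\bigl[1 + s(x-x_1)(x-x_2)\bigr] = 0,
\end{equation*}
so $x_0^r = 0$ is always a root. The other two roots satisfy $(x-x_1)(x-x_2) = -s$ (using $s^2 = 1$), which expands to the quadratic
\begin{equation*}
x^2 - (x_1+x_2)\,x + (x_1 x_2 + s) = 0.
\end{equation*}
Applying the quadratic formula yields the stated expressions for $x_1^r$ and $x_2^r$ at once; the factor $s$ appearing in front of the radical in the statement is harmless because $s=\pm1$ gets absorbed into the $\pm$ of the usual formula.

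The nature of these two roots is then controlled by the discriminant
\begin{equation*}
\Delta = (x_2-x_1)^2 - 4s,
\end{equation*}
so the substantive part of the proof is a case analysis on $s=\mathrm{sgn}(M)$. For $M>0$ ($s=1$) I would distinguish $\Delta>0$, $\Delta=0$, and $\Delta<0$, which correspond respectively to $|x_2-x_1|>2$, $|x_2-x_1|=2$, and $|x_2-x_1|<2$, directly giving the third and fourth bullets of the proposition. For the boundary $\Delta=0$, the quadratic has the repeated root $(x_1+x_2)/2$, which matches the first bullet; imposing in addition the symmetry $x_2=-x_1$ collapses this repeated root onto $x_0^r=0$, yielding the triple root of the second bullet.

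For $M<0$ ($s=-1$) one has $\Delta = (x_2-x_1)^2 + 4 \geq 4 > 0$, so the two non-trivial roots are automatically real and distinct from each other; hence the only way to lose overall distinctness is for one of them to coincide with $x_0^r = 0$. By Vieta's formulas applied to the residual quadratic, a root equals zero precisely when $x_1 x_2 + s = 0$, i.e.\ when $x_1 x_2 = 1$. Specializing this condition to the symmetric sub-locus $x_1 = x_2$ gives exactly $x_1 = x_2 = \pm 1$, which is the case singled out in the proposition.

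The routine part is just the quadratic formula and the sign analysis of $\Delta$; I expect no technical obstacle there. The delicate point is the $M<0$ case: my analysis indicates that the full locus on which $x_0^r=0$ acquires multiplicity two is $\{x_1 x_2 = 1\}$, which is strictly larger than the diagonal $x_1 = x_2 = \pm 1$ stated in the proposition. The main thing to check carefully when writing up the proof is therefore whether the proposition intends to list only the symmetric representative of that locus, or whether an implicit normalization (for instance an ordering or a symmetry assumption on the canonical fixed points) restricts us to the diagonal case.
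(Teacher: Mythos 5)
Your proof is correct, and it is essentially the intended argument: the paper states this proposition as an ``immediate result'' and supplies no proof at all, so the factor-out-$x$, quadratic-formula, discriminant-case-analysis route you take is exactly what is implicit. Your closing observation is also right and worth keeping: for $M<0$ the two non-trivial roots are always real and distinct from each other, and one of them vanishes precisely on the locus $x_1x_2+s=0$, i.e.\ $x_1x_2=1$, which strictly contains the diagonal $x_1=x_2=\pm1$ named in the proposition (e.g.\ $x_1=2$, $x_2=1/2$ gives roots $0,0,5/2$). Nothing elsewhere in the paper imposes an ordering or symmetry that would restrict to the diagonal, so this is a genuine overstatement in the proposition rather than a hidden normalization; your proof should state the exceptional locus as $x_1x_2=1$. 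The same Vieta observation exposes an analogous gap in the $M>0$ case that you did not flag: when $\vert x_2-x_1\vert>2$ the two non-trivial roots are real and distinct, but they need not both be non-zero, since one of them vanishes whenever $x_1x_2=-1$ (e.g.\ $x_1=-1/2$, $x_2=2$), so the third bullet's ``non-zero'' also requires the caveat $x_1x_2\neq-1$.
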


As for the derivatives of the CCM, they are

\begin{equation}
	\begin{aligned}
		g_3'(x)&=1+s\left[(x-x_1)(x-x_2)+x(x-x_2)+x(x-x_1)\right],\\
		g_3''(x)&=s\left[6x-2(x_1+x_2)\right]
	\end{aligned}
\end{equation}

from which we can determine that the critical points are

\begin{equation}
	\begin{aligned}
		x_1^c&= \frac{1}{3}\left[x_1+x_2+s\sqrt{x_1^2-x_1x_2+x_2^2-3s}\right],\\
		x_2^c&= \frac{1}{3}\left[x_1+x_2-s\sqrt{x_1^2-x_1x_2+x_2^2-3s}\right].
	\end{aligned}
\end{equation}

So that, evaluating $g''$ at the critical values of $g,$ $x_1^c$ and  $x_2^c$ we have thatwe have that

\[g_3''(x_1^c)=-g_3''(x_2^c)=2\sqrt{x_1^2-x_1x_2+x_2^2-3s},\]

and, then, we have as expected one maximum and one minimum unless both are equal, in which case we have a saddle point. From this we have proved

\begin{prop}
	Consider the CCM as defined above. Then, if $M>0$
	\begin{itemize}
		\item If $x_1^2+x_2^2>3+x_1x_2$, both critical points are real and distinct, one corresponding to a maximum and the other to a minimum.
		\item If $x_1^2+x_2^2=3+x_1x_2$, both critical points are equal and correspond to a saddle point.
		\item If $x_1^2+x_2^2<3+x_1x_2$, the two critical points are complex conjugates, so we do not have any real critical points.
	\end{itemize}
	And if $M<0$,
	\begin{itemize}
		\item If $x_1^2+x_2^2+3>x_1x_2$, both critical points are real and distinct, one corresponding to a maximum and the other to a minimum.
		\item If $x_1^2+x_2^2+3=x_1x_2$, both critical points are equal and correspond to a saddle point.
		\item If $x_1^2+x_2^2+3<x_1x_2$, the two critical points are complex conjugates, so we do not have any (real) critical points.
	\end{itemize}
\end{prop}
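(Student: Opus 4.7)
The plan is to read off the classification directly from the quantity appearing under the square roots in the formulas for $x_1^c$ and $x_2^c$, and then to apply the one-dimensional second derivative test at each real critical point using the values of $g_3''$ already computed above the proposition.

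First I would observe that the two critical points are the roots of the quadratic equation $g_3'(x) = 0$, and the expression
\[\Delta := x_1^2 - x_1 x_2 + x_2^2 - 3s\]
that sits under the square root in the formulas for $x_1^c$ and $x_2^c$ is, up to a positive multiplicative constant, the discriminant of that quadratic. The real-quadratic trichotomy then yields the three qualitative cases: $\Delta > 0$ gives two distinct real critical points, $\Delta = 0$ gives a single repeated real critical point, and $\Delta < 0$ gives a complex conjugate pair. Substituting $s = +1$ (case $M>0$) rewrites these three conditions as $x_1^2+x_2^2 \gtreqless 3 + x_1 x_2$; substituting $s = -1$ (case $M<0$) rewrites them as $x_1^2+x_2^2+3 \gtreqless x_1 x_2$. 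These are exactly the six inequalities listed in the statement.

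For the maximum/minimum labelling in the nondegenerate case $\Delta > 0$, I would appeal to the identity
\[g_3''(x_1^c) = -g_3''(x_2^c) = 2\sqrt{\Delta}\]
derived just above the proposition. When $\Delta > 0$ these two values are nonzero and of opposite sign, so the standard second derivative test immediately gives one local maximum and one local minimum, independently of the sign of $M$.

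The only non-routine step is the degenerate case $\Delta = 0$: the two critical points coincide and $g_3''$ vanishes at that common point, so the second derivative test is inconclusive. The fix is to notice that $g_3'''(x) = 6s \neq 0$, so the first nonvanishing derivative beyond the constant term in the Taylor expansion of $g_3$ at $x^c$ is of odd order. This forces $x^c$ to be a horizontal inflection point—neither a local maximum nor a local minimum—which is exactly what the thesis calls a \emph{saddle point} in the terminology introduced in Chapter~\ref{cha:Intro}. This last bit is the only place where some care is needed; the rest of the argument is essentially forced by the formulas already in hand.
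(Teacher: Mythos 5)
Your proposal is correct and follows essentially the same route as the thesis: the paper likewise reads the trichotomy off the sign of $x_1^2-x_1x_2+x_2^2-3s$ under the square root in the formulas for $x_1^c$ and $x_2^c$, and uses $g_3''(x_1^c)=-g_3''(x_2^c)=2\sqrt{x_1^2-x_1x_2+x_2^2-3s}$ to conclude one maximum and one minimum, with a saddle point when they coincide. Your explicit appeal to $g_3'''=6s\neq0$ in the degenerate case is a small but welcome addition, since the paper simply asserts the saddle-point conclusion there.
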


\section{Stability and Chaos in the Canonical Cubic Map}
\label{sec:Stability&ChaosCCM}

Next, let us determine the stability of the periodic points of the CCM. As we will see further below in chapter \ref{cha:Generalization}, this analysis will suffice for any cubic map, by means of topological conjugacy. However, we can only explicitly give this for the fixed points.

\subsection{Fixed Points}
We already know, by construction, that the fixed points of the CCM are $x_0=0,\,x_1$ and $x_2$. While the first is constant always, the other two fixed points are set to be functions of the parameter $\lambda$. By evaluating in $g_3'$ we get the eigenvalue functions. For $x_0=0$ we have

\[\phi_0(\lambda)=g_3'(0)=sx_1(\lambda)x_2(\lambda)+1,\]

So that, the stability condition for this fixed point is

\begin{equation}
	\label{eq:CCM-stbcond-x0}
	-2<sx_1x_2<0.
\end{equation}

We can draw some conclusions from this. In order for zero to be a stable (attracting) fixed point one must have:

\begin{lem}
	The following are sufficient conditions for the asymptotic stability of the zero fixed point of the Canonical Cubic Map:
	\begin{itemize}
		\item in magnitude, $\vert x_1\vert\vert x_2\vert<2$; and
		\item if $M>0$, $x_1$ and $x_2$ must have \emph{different} signs; or
		\item if $M<0$, $x_1$ and $x_2$ must have \emph{the same} sign.
	\end{itemize}
\end{lem}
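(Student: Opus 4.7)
The plan is to derive both parts of the lemma directly from the stability inequality \eqref{eq:CCM-stbcond-x0}, namely $-2 < sx_1x_2 < 0$, which was obtained in the preceding paragraph from $g_3'(0) = 1 + sx_1x_2$ together with the hyperbolic attractor criterion $|g_3'(0)| < 1$ of Definition \ref{def:attractor-repellor}. Since $0$ is then a hyperbolic attracting fixed point, Theorem \ref{thm:attractor-repellor} gives asymptotic stability, so it suffices to extract the two bulleted conditions from this single inequality.

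First, I would rewrite $-2 < sx_1x_2 < 0$ as the conjunction of the sign condition $sx_1x_2 < 0$ and the magnitude condition $|sx_1x_2| < 2$. Because $|s| = 1$, the latter is equivalent to $|x_1||x_2| < 2$, which is precisely the first bullet and holds irrespective of $M$.

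Next I would split the sign condition into the two cases $s = +1$ and $s = -1$. In the case $M > 0$ (so $s = 1$), the condition becomes $x_1 x_2 < 0$, which forces $x_1$ and $x_2$ to have opposite signs (giving the second bullet). In the case $M < 0$ (so $s = -1$), the condition reads $-x_1 x_2 < 0$, i.e.\ $x_1 x_2 > 0$, so $x_1$ and $x_2$ must have the same sign (giving the third bullet). Combining either sign case with the magnitude condition yields $-2 < sx_1x_2 < 0$, which in turn gives $|g_3'(0)| < 1$; by Definition \ref{def:attractor-repellor} and Theorem \ref{thm:attractor-repellor}, $x_0 = 0$ is then asymptotically stable.

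There is no real obstacle here: the whole argument is a case split on the sign of $M$ applied to an inequality already established. The only mild subtlety is that the lemma claims sufficiency (not necessity), so one need not worry about boundary cases where $x_1 = 0$ or $x_2 = 0$ (which would make $0$ a non-hyperbolic or multiple fixed point and require the separate tools of Section \ref{sec:nonhyperbolic}); these are automatically excluded by the strict sign requirements stated in the bullets.
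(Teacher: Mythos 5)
Your proposal is correct and follows exactly the route the paper intends: the lemma is stated immediately after the stability condition $-2<sx_1x_2<0$ (derived from $\phi_0=g_3'(0)=sx_1x_2+1$ and $|g_3'(0)|<1$), and the paper leaves the decomposition into the magnitude condition $|x_1||x_2|<2$ plus the sign case split on $s=\pm1$ implicit. Your write-up simply fills in those omitted details, including the correct observation that the strict sign requirements exclude the non-hyperbolic multiplicity cases.
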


Notice that the stability condition \eqref{eq:CCM-stbcond-x0} states that the product of the distances from the other two fixed points to the zero fixed point must within the range $(-2,\,0)$, for positive $M$ [or $(0,\,2)$ for negative $M$], for the zero fixed point to be asymptotically stable.

The case of $x_k=0$, $k\in\{1,\,2\}$, is not included in the discussion here since this would represent repeated fixed points (multiplicity), which will be discussed in section \ref{sec:CCM-multiplicity} below; likewise, in the remainder of this section we will avoid dealing with multiplicity of the fixed points.

Now, for $x_1$, its eigenvalue function is

\[\phi_1(\lambda)=g_3'(x_1(\lambda))=1+sx_1(\lambda)(x_1(\lambda)-x_2(\lambda)),\]

so that the stability condition for this fixed point is

\begin{equation}
	-2<sx_1(x_1-x_2)<0.
	\label{eq:CCM-stbcond-x1}
\end{equation}

This gives us the following

\begin{lem}
	The following are sufficient conditions for the asymptotic stability of the $x_1$ fixed point:
	
	If $M>0$ then
	\begin{itemize}
		\item $x_1$ and $x_2$ must have \emph{the same} sign; and
		\item $|x_1|<|x_2|<|x_1+2/x_1|$.
	\end{itemize}
	
	On the other hand, if $M<0$,
	\begin{itemize}
		\item $|x_2|<|x_1|$; and
		\item if $|x_1|\geq\sqrt{2}$, then $|x_1-2/x_1|<|x_2|<|x_1|$; or
		\item if $0<x_1<\sqrt{2}$, then $x_1-2/x_1<x_2<x_1$; or
		\item if $-\sqrt{2}<x_1<0$, then $x_1<x_2<x_1-2/x_1$.
	\end{itemize}
\end{lem}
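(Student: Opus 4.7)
The plan is to read off the stability condition for $x_1$ directly from the hyperbolic attractor criterion (Definition \ref{def:attractor-repellor}) applied to the eigenvalue function $\phi_1(\lambda) = g_3'(x_1) = 1 + s\,x_1(x_1-x_2)$, which is already computed in \eqref{eq:CCM-stbcond-x1}. Asymptotic stability requires $|\phi_1(\lambda)|<1$, equivalently
\[-2 < s\,x_1(x_1-x_2) < 0.\]
From here the argument is entirely algebraic: I would split on $s=\pm 1$, then on the sign of $x_1$, and translate the two-sided inequality into the sign and magnitude conditions stated in the lemma.

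For the case $M>0$ (so $s=1$) the inequality reads $-2<x_1(x_1-x_2)<0$. The \emph{negativity} $x_1(x_1-x_2)<0$ forces $x_1$ and $x_1-x_2$ to have opposite signs; a short sign chase shows this is equivalent to $x_1,x_2$ having the same sign with $|x_1|<|x_2|$. The \emph{lower bound} $x_1(x_1-x_2)>-2$ rearranges to $x_1(x_2-x_1)<2$; dividing by the positive quantity $|x_1|$ and using $|x_2|>|x_1|$ converts this to $|x_2|<|x_1|+2/|x_1|=|x_1+2/x_1|$ (the last equality holding because the two summands share sign with $x_1$). Combining the two gives the conclusion $|x_1|<|x_2|<|x_1+2/x_1|$.

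For $M<0$ (so $s=-1$) the inequality becomes $0 < x_1(x_1-x_2) < 2$. The \emph{positivity} on the left forces $x_1$ and $x_1-x_2$ to share sign; the condition $x_1(x_2-x_1)<0$ is then equivalent (for real $x_1\neq 0$) to $x_2$ lying strictly between $x_1$ and some point on the same side of $x_1$ as $0$, and in particular yields $|x_2|<|x_1|$ whenever $x_1$ and $x_2$ have the same sign. The upper bound $x_1(x_1-x_2)<2$ rearranges to $x_1x_2 > x_1^2-2$. Dividing by $x_1$ (and flipping the inequality when $x_1<0$) gives, respectively, $x_2 > x_1 - 2/x_1$ when $x_1>0$ and $x_2 < x_1 - 2/x_1$ when $x_1<0$. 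Intersecting with the positivity constraint produces the two-sided envelopes $x_1-2/x_1 < x_2 < x_1$ (if $x_1>0$) and $x_1 < x_2 < x_1 - 2/x_1$ (if $x_1<0$). The final step is to observe that $x_1-2/x_1$ changes sign exactly at $x_1=\pm\sqrt{2}$: when $|x_1|\ge\sqrt{2}$ the quantity $x_1-2/x_1$ lies on the same side of zero as $x_1$, so the envelope can be rewritten in absolute-value form as $|x_1-2/x_1|<|x_2|<|x_1|$; when $|x_1|<\sqrt{2}$ the two endpoints straddle zero and the signed form must be kept.

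The only substantive obstacle is bookkeeping in the $M<0$ case, since the product $x_1x_2$ can be negative and the translation between the signed inequality and its absolute-value reformulation depends on whether $x_1-2/x_1$ has the same sign as $x_1$; this is precisely why the statement splits at the threshold $|x_1|=\sqrt{2}$. No new dynamical input is needed beyond Definition \ref{def:attractor-repellor} and the explicit formula for $g_3'$ computed in Section \ref{sec:Stability&ChaosCCM}.
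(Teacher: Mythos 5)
Your approach is exactly the paper's: the paper offers no separate argument for this lemma beyond deriving the eigenvalue function $\phi_1(\lambda)=1+sx_1(x_1-x_2)$ and the resulting stability condition $-2<sx_1(x_1-x_2)<0$ (its equation \eqref{eq:CCM-stbcond-x1}), after which the lemma is asserted as an algebraic restatement. Your $M>0$ case is complete and correct: the sign chase giving ``same sign and $|x_1|<|x_2|$'' and the conversion of the lower bound into $|x_2|<|x_1|+2/|x_1|=|x_1+2/x_1|$ are both exact equivalences, so you actually prove more than sufficiency there. Your derivation of the exact stability region in the $M<0$ case, namely $x_1-2/x_1<x_2<x_1$ for $x_1>0$ and $x_1<x_2<x_1-2/x_1$ for $x_1<0$, is also correct and is the substantive content.

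The one loose step is the final ``rewriting'' of that signed envelope in absolute-value form when $|x_1|\geq\sqrt{2}$. The implication only runs one way: the signed envelope forces $x_2$ onto the same side of zero as $x_1$ and then implies $|x_1-2/x_1|<|x_2|<|x_1|$, but the absolute-value form does not imply the signed envelope, because it admits $x_2$ of the opposite sign. For a lemma asserting \emph{sufficient} conditions the direction that matters is (stated conditions) $\Rightarrow$ (stability), and that direction fails for the absolute-value form alone: take $M<0$, $x_1=2$, $x_2=-3/2$; then $|x_1-2/x_1|=1<|x_2|=3/2<2=|x_1|$, yet $sx_1(x_1-x_2)=-7\notin(-2,0)$, so $x_1$ is not attracting. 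This defect is inherited from the lemma's own statement rather than introduced by you, but your write-up papers over it with the phrase ``can be rewritten''; to make the sub-case literally correct you must carry along the sign condition on $x_2$ (equivalently, keep the signed envelope you already derived), and it would strengthen your proof to say so explicitly rather than silently passing to the weaker absolute-value form.
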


Again, notice that the stability condition \eqref{eq:CCM-stbcond-x1} for $x_1$ can be translated as that the product of the distances between the other two fixed points and $x_1$ must be within the range $(-2,\,0)$ for positive $M$ [or $(0,\,2)$ for negative $M$]. Also notice that when $0<|x_1|<\sqrt{2}$ the bound $x_1-2/x_1$ may be negative even if $x_1>0$ or positive even if $x_1<0$, therefore the usefulness of the distinction.

Finally, for $x_2$ we have the eigenvalue function

\[\phi_2(\lambda)=g_3'(x_2(\lambda))=1+sx_2(\lambda)(x_2(\lambda)-x_1(\lambda)),\]

which gives the stability condition

\begin{equation}
	\label{eq:CCM-stbcond-x2}
	-2<sx_2(x_2-x_1)<0,
\end{equation}

which in turn produces the following

\begin{lem}
	The following are sufficient conditions for the asymptotic stability of the $x_2$ fixed point of the Canonical Cubic Map:
	
	If $M>0$ then
	\begin{itemize}
		\item $x_1$ and $x_2$ must have \emph{the same} sign; and
		\item $|x_2|<|x_1|<|x_2+2/x_2|$.
	\end{itemize}
	
	On the other hand, if $M<0$,
	\begin{itemize}
		\item $|x_1|<|x_2|$; and
		\item if $|x_2|\geq\sqrt{2}$, then $|x_2-2/x_2|<|x_1|<|x_2|$; or
		\item if $0<x_2<\sqrt{2}$, then $x_2-2/x_2<x_1<x_2$; or
		\item if $-\sqrt{2}<x_2<0$, then $x_2<x_1<x_2-2/x_2$.
	\end{itemize}
\end{lem}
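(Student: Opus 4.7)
The plan is to derive each bullet by manipulating the stability condition \eqref{eq:CCM-stbcond-x2}, namely
\[-2 < s\, x_2(x_2-x_1) < 0,\]
which has already been established just above from $|\phi_2(\lambda)|<1$. Observe that this inequality is obtained from \eqref{eq:CCM-stbcond-x1} by the formal interchange $x_1\leftrightarrow x_2$, so the argument will mirror the preceding lemma with roles of the two non-zero fixed points swapped; I will therefore proceed case by case on the sign of $M$.

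For the case $M>0$ (so $s=1$) I would first exploit the right-hand inequality $x_2(x_2-x_1)<0$. This forces $x_2$ and $x_2-x_1$ to have opposite signs, so if $x_2>0$ then $x_1>x_2>0$, and if $x_2<0$ then $x_1<x_2<0$; in either subcase $x_1$ and $x_2$ share a sign and $|x_2|<|x_1|$. Next, I would rearrange the left-hand inequality $-2<x_2(x_2-x_1)$ into $x_1 x_2<x_2^2+2$, divide through by $x_2$ while tracking its sign, and absorb the result into $|x_1|<\bigl|x_2+2/x_2\bigr|$. Combining the two bounds yields the stated chain $|x_2|<|x_1|<|x_2+2/x_2|$.

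For the case $M<0$ (so $s=-1$) the condition becomes $0<x_2(x_2-x_1)<2$. The left inequality forces $x_2$ and $x_2-x_1$ to have the same sign, and the right inequality, after the rearrangement $x_1 x_2>x_2^2-2$, gives the lower bound on $x_1$ as a function of $x_2$. Here the key observation is that $x_2-2/x_2$ shares the sign of $x_2$ if and only if $|x_2|\geq\sqrt{2}$: when $|x_2|\geq\sqrt{2}$ we may pass to absolute values and obtain $|x_2-2/x_2|<|x_1|<|x_2|$, recovering $|x_1|<|x_2|$; when $|x_2|<\sqrt{2}$, however, the lower bound $x_2-2/x_2$ lies on the opposite side of zero from $x_2$, so the natural interval for $x_1$ straddles zero and must be written in the literal forms $x_2-2/x_2<x_1<x_2$ (if $0<x_2<\sqrt{2}$) and $x_2<x_1<x_2-2/x_2$ (if $-\sqrt{2}<x_2<0$), as stated.

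The main obstacle is purely organizational rather than analytical: carefully tracking the sign of $x_2$, the sign of $x_2\pm 2/x_2$, and the direction of the inequality when dividing by $x_2$, so that each of the three subcases of the $M<0$ branch is expressed in the cleanest intrinsic form. Once that bookkeeping is done, every bullet of the lemma follows from elementary algebra on \eqref{eq:CCM-stbcond-x2}, and no deeper dynamical tool is needed beyond the already-computed eigenvalue function $\phi_2$.
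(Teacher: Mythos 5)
Your proposal is correct and follows essentially the same route as the paper: the paper derives the stability condition $-2<s\,x_2(x_2-x_1)<0$ from the eigenvalue function $\phi_2$ and presents the lemma as the result of exactly this kind of sign-tracking algebraic case analysis (mirroring the preceding lemma for $x_1$ under the interchange $x_1\leftrightarrow x_2$), offering no further argument. Your bookkeeping of the sign of $x_2$, of $x_2\pm 2/x_2$, and of the threshold $|x_2|=\sqrt{2}$ reproduces each bullet correctly.
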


Likewise, we see that the stability condition \eqref{eq:CCM-stbcond-x2} involves the product of the distances from the other two fixed points to the one whose stability is of interest. We will later generalize these ``stability conditions'' to functions of the parameter which are different for each fixed point, but of whose value depend the stability of not only the fixed points, but higher period periodic points also, through period doubling bifurcations. From the stability conditions for the three fixed points we have proved the following

\begin{cor}
	A cubic polynomial map with three different real roots can only have a single attracting fixed point.
\end{cor}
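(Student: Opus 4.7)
The strategy is to show that no two of the three stability conditions \eqref{eq:CCM-stbcond-x0}, \eqref{eq:CCM-stbcond-x1}, \eqref{eq:CCM-stbcond-x2} can be simultaneously satisfied; this immediately implies that at most one fixed point of the canonical cubic map can be attracting. For brevity I would introduce the \emph{product distances} $P_0 := x_1 x_2$, $P_1 := x_1(x_1 - x_2)$, and $P_2 := x_2(x_2 - x_1)$, which are precisely the quantities appearing in the three eigenvalue computations, so that the stability of the $k$-th fixed point is the single two-sided inequality $-2 < s P_k < 0$.

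The heart of the argument is an elementary algebraic identity: summing the product distances in pairs yields
\[ P_0 + P_1 \;=\; x_1^{2}, \qquad P_0 + P_2 \;=\; x_2^{2}, \qquad P_1 + P_2 \;=\; (x_1 - x_2)^{2}. \]
Because the three fixed points $0$, $x_1$, $x_2$ are pairwise distinct---this is precisely where the hypothesis of three different real roots enters---each right-hand side is strictly positive. Consequently $s(P_i + P_j)$ has the same sign as $s$ for every pair $\{i, j\} \subset \{0,1,2\}$.

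For the standard orientation $s = +1$ (equivalently $M > 0$) this closes the argument at once: simultaneous stability of any two fixed points would force $s P_i < 0$ and $s P_j < 0$, and therefore $s(P_i + P_j) < 0$, in flat contradiction with the positivity above. The main obstacle is the case $s = -1$, where the pair sums reverse sign and pairwise incompatibility no longer follows from the summation alone. I would handle that case by returning to the sign data attached to each stability condition (as recorded in the three stability lemmas preceding the corollary), exploiting the full two-sided bound $-2 < s P_k < 0$ rather than only its upper half, and closing with a short case analysis on the relative ordering of $x_1$ and $x_2$ together with the sign of $x_1 - x_2$.
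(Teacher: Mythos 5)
Your argument for the case $s=+1$ is complete and is in fact sharper than what the paper offers: the paper's entire proof is the instruction ``Compare the stability conditions for the three fixed points,'' whereas your pair-sum identities $P_0+P_1=x_1^2$, $P_0+P_2=x_2^2$, $P_1+P_2=(x_1-x_2)^2$ turn that comparison into a one-line contradiction, with the hypothesis of three \emph{distinct} real fixed points entering exactly where it should (strict positivity of the squares). For $M>0$ this is a genuinely cleaner route than the paper's.

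The gap is the case $s=-1$, which you defer to ``a short case analysis'' without carrying it out --- and this gap cannot be closed, because the statement is false there. Take $M<0$, $x_1=1$, $x_2=\tfrac12$, so the fixed points $0,\tfrac12,1$ are distinct. Then $P_0=x_1x_2=\tfrac12$ and $P_1=x_1(x_1-x_2)=\tfrac12$, so both lie in $(0,2)$ and the stability condition $-2<sP_k<0$ holds simultaneously for $k=0$ and $k=1$: the multipliers are $\phi_0=1-\tfrac12=\tfrac12$ and $\phi_1=1-\tfrac12=\tfrac12$, and both $0$ and $x_1$ are attracting (while $x_2$, the middle fixed point, has multiplier $\tfrac54$ and repels). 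This is consistent with Singer's theorem, which allows a cubic up to four attracting orbits. So the corollary is only true under the additional hypothesis $M>0$ (the restriction the paper itself imposes in the theorem immediately following), and your summation argument is precisely a proof of that corrected statement. The honest conclusion of your proposal should be that the $s=+1$ half is a theorem and the $s=-1$ half is a counterexample, not a pending case analysis.
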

\begin{proof}
	Compare the stability conditions for the three fixed points.
\end{proof}

And,

\begin{thm}
	If $M>0$, then sufficient conditions for the stability of a fixed point of the canonical cubic map are
	\begin{itemize}
		\item The product of the distances between each unstable fixed point and the stable one must be negative, which means one distance is positive and the other negative, which leads us to
		\item the fixed point that lies between the other two will be stable, while the outer fixed points will be unstable, as long as
		\item the product of the distances between each unstable fixed point and the stable one must be greater than -2.
	\end{itemize}
\end{thm}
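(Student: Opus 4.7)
The plan is to unify the three separate stability conditions \eqref{eq:CCM-stbcond-x0}, \eqref{eq:CCM-stbcond-x1} and \eqref{eq:CCM-stbcond-x2} (already derived one-by-one in the preceding lemmas) into a single geometric statement about the relative position of the three fixed points. Everything needed is already on the page; the task is really to recognize the common form shared by all three conditions and read off its geometric content.

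First, I would differentiate the factored form $g_3(x)=x+s\,x(x-x_1)(x-x_2)$ using the product rule and observe that at \emph{any} fixed point $p\in\{0,x_1,x_2\}$ the two terms that contain the factor $(x-p)$ vanish upon evaluation at $x=p$, leaving the uniform expression
\[g_3'(p)=1+s\!\!\prod_{q\in\{0,x_1,x_2\}\setminus\{p\}}\!\!(p-q).\]
Thus the attractor criterion $|g_3'(p)|<1$ collapses in every case to the single double inequality $-2<s\prod_{q\neq p}(p-q)<0$, recovering \eqref{eq:CCM-stbcond-x0}, \eqref{eq:CCM-stbcond-x1} and \eqref{eq:CCM-stbcond-x2} simultaneously and making transparent why they share the same shape.

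Second, specializing to $M>0$ (so $s=1$), the stability condition at $p$ reads $-2<(p-q_1)(p-q_2)<0$, where $q_1,q_2$ are the other two fixed points. The negativity of the product forces $(p-q_1)$ and $(p-q_2)$ to have opposite signs, which is the analytic translation of the geometric assertion that $p$ lies strictly between $q_1$ and $q_2$. This yields the first two bullets of the theorem: being the middle point is equivalent to the product of signed distances being negative, and hence only the middle fixed point can be attracting (consistent with the corollary that a cubic with three distinct real roots has at most one attracting fixed point). The residual bound $(p-q_1)(p-q_2)>-2$ then gives the third bullet directly, controlling how far the two outer fixed points may move apart before the middle one loses stability through a period-doubling bifurcation (in analogy with Propositions \ref{prop:CQM-per2bif-b1} and \ref{prop:CQM-per2bif-b2} for the CQM).

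The only delicate step, and the one I would spell out carefully, is the uniform derivative formula at $p=0$, since the factor $(x-0)=x$ is not written explicitly in the symbolic form of $g_3$ used in \eqref{eq:CanonicalCubicMap}; one has to verify by direct computation that $g_3'(0)=1+s(0-x_1)(0-x_2)$ is in fact the specialization of the same product formula, so that the three stability conditions really are instances of one identity. Once this is in place, the theorem is just a reading of that identity under the sign assumption $s=1$, with no further calculation required.
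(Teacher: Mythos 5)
Your proposal is correct and follows essentially the same route as the paper: the theorem there is stated as a direct summary of the three stability conditions \eqref{eq:CCM-stbcond-x0}, \eqref{eq:CCM-stbcond-x1} and \eqref{eq:CCM-stbcond-x2} obtained from the eigenvalue functions, and your reading of the sign and magnitude of the product of signed distances is exactly the intended argument. Your unification of the three derivative computations into the single formula $g_3'(p)=1+s\prod_{q\neq p}(p-q)$ is a tidy packaging that the paper itself adopts later, in the $n$-th degree chapter, as the Product Distance Function $D_{n,k}$.
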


\section{Higher period periodic points}

Although, as previously stated, in general, we cannot calculate the values of the periodic points of period 2 or higher, we can calculate for which values of the stability conditions above the fixed points undergo period doubling bifurcations. We will see in chapter \ref{cha:Generalization} that these stability conditions can actually be generalized to something called the ``Product Distance Function'', which depends on the parameter and is different for each fixed point. Example \ref{ex:CCM-exponential} below allowed us to determine the bifurcation values, $c_k$, of the fixed points of the CCM up to some precision. The values obtained are shown in table \ref{tab:CCM-BifurcationValues}. When the stability conditions of each fixed point crosses these values, bifurcations take place.

\begin{table}
	\begin{tabular}[]{|c|c|c|}
		\hline
		$k$ & $c_k$ \tabularnewline
		\hline
		0 & 0 \tabularnewline
		\hline
		1 & 2 \tabularnewline
		\hline
		2 & $3.0\pm0.005$\tabularnewline
		\hline
		3 & $3.236\pm0.002$\tabularnewline
		\hline
		4 & $3.288\pm0.0005$\tabularnewline
		\hline
		5 & $3.29925\pm0.00025$\tabularnewline
		\hline
		$\vdots$ & $\vdots$\tabularnewline
		\hline
		$\infty$ & $\sim3.30228\pm5\times10^{-6}$\tabularnewline
		\hline
	\end{tabular}
	\caption{Bifurcation values for the Canonical Cubic Map. $c_0=0$ is included only as a reference, although it is not a bifurcation value.}
	\label{tab:CCM-BifurcationValues}
\end{table}

From these values, we can construct the analogue of the stability bands of the CQM for the CCM.

\begin{defn}[Stability Bands of the CCM]
	Let $x_1,\,x_2:\mathcal{A}\subseteq\mathbb{R}\rightarrow\mathbb{R}$ be the two nonzero fixed points of the family of cubic maps $g_3$, as given by definition \ref{def:CCM}, and $\{c_k\}_{k\in\mathbb{N}}$ the sequence of bifurcation values of table \ref{tab:CCM-BifurcationValues}. The open interval
	\begin{equation}
		\left(-c_{k+1},\,-c_k\right),\quad\lambda\in\mathcal{A}
	\end{equation}
	is called the $k$-th \emph{stability band} of the CCM.
\end{defn}

Notice, however, that in contrast with the stability bands of the CQM, the stability bands of the CCM cannot be plotted along the fixed points plots, at least not directly as just defined, but rather they must be represented in a separate plot for the stability conditions, as we shall see in the examples of section \ref{sec:CCM-examples} further below.

\section{Multiplicity of the fixed points}
\label{sec:CCM-multiplicity}

When multiplicity of the fixed points takes place in the CCM, without loss of generality, $g_3$ can take the following forms

\begin{equation}
	g_3(x)=
	\begin{cases}
		x + sx^2(x-x_1),	&\text{if } x_2=x_0=0,\,x_1\neq0\\
		x + sx(x-x_1)^2, 	&\text{if } x_1=x_2\neq0\\
		x + sx^3,			&\text{if } x_1=x_2=0.
	\end{cases}
\end{equation}

with corresponding derivatives

\begin{equation}
	g'_3(x)=
	\begin{cases}
		1 + 2sx(x-x_1)+sx^2,			&\text{if } x_2=x_0=0,\,x_1\neq0\\
		1 + sx(x-x_1)^2+2sx(x-x_1), 	&\text{if } x_1=x_2\neq0\\
		1 + 3sx^2,						&\text{if } x_1=x_2=0.
	\end{cases}
\end{equation}

and therefore, $g'_3(x_k)=1,\,k\in\{0,\,1,\,2\}$, for all three cases, so that we deal with nonhyperbolic fixed points, according to definition \ref{def:hyperbolic-fp}.

\begin{prop}
	The stability of the fixed points of the CCM when they present multiplicity is, without loss of generality, as follows
	\begin{enumerate}
		\item If $x_2=x_0=0,\,x_1\neq0$, the zero fixed point is an unstable fixed point with multiplicity of two; and
		\begin{itemize}
			\item if $M>0$ and
			\begin{itemize}
				\item if $x_1>0$ it is semiasymptotically stable from the right;
				\item if $x_1<0$ it is semiasymptotically stable from the left;
			\end{itemize}
			\item or if $M<0$ and
			\begin{itemize}
				\item if $x_1>0$ it is semiasymptotically stable from the left;
				\item if $x_1<0$ it is semiasymptotically stable from the right.
			\end{itemize}
		\end{itemize}
		\item If $x_1=x_2\neq0$, this fixed point has multiplicity of two and it is \emph{unstable}; moreover
		\begin{itemize}
			\item if $M>0$ and
			\begin{itemize}
				\item if $x_1>0$ it is semiasymptotically stable from the left;
				\item if $x_1<0$ it is semiasymptotically stable from the right;
			\end{itemize}
			\item or if $M<0$ and
			\begin{itemize}
				\item if $x_1>0$ it is semiasymptotically stable from the right;
				\item if $x_1<0$ it is semiasymptotically stable from the left.
			\end{itemize}
		\end{itemize}
		\item If $x_0=x_1=x_2=0$, the zero fixed point has multiplicity of three and
		\begin{itemize}
			\item if $M>0$, it is unstable;
			\item if $M<0$, it is asymptotically stable.
		\end{itemize}
	\end{enumerate}
\end{prop}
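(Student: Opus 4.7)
The plan is to reduce all three multiplicity scenarios to applications of the nonhyperbolic-fixed-point criteria already available in the excerpt, namely Theorem \ref{thm:nonhyperbolic-pos} and Theorem \ref{thm:semistability}. A preliminary observation is that in every case the relevant fixed point $x_*$ satisfies $g_3'(x_*)=1$, which is exactly the hypothesis of those two theorems: in Cases (1) and (3) this follows from $g_3'(0)=1+sx_1x_2=1$ whenever $x_1 x_2=0$, and in Case (2) one substitutes $x=x_1$ into $g_3'(x)=1+s(x-x_1)^2+2sx(x-x_1)$. Hence every case concerns a nonhyperbolic fixed point and is decided by the sign of the first non-vanishing higher derivative at $x_*$.

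For Case (1) I would compute $g_3''(0)$ directly from $g_3(x)=x+sx^2(x-x_1)$, obtaining an expression proportional to $-sx_1$, which is nonzero by the hypothesis $x_1\neq 0$. Theorem \ref{thm:semistability} then yields semiasymptotic stability from the right when $g_3''(0)<0$ and from the left when $g_3''(0)>0$, and splitting on the four combinations of $s=\mathrm{sgn}(M)\in\{\pm1\}$ and the sign of $x_1$ recovers the four subcases of Case (1) verbatim. Case (2) proceeds identically from $g_3(x)=x+sx(x-x_1)^2$; the resulting $g_3''(x_1)$ is the negative of the expression obtained in Case (1), which is precisely why the right/left assignments in the proposition are flipped between the two cases.

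Case (3) is slightly different because $g_3(x)=x+sx^3$ has $g_3''(0)=0$, so Theorem \ref{thm:semistability} is inconclusive and I would have to invoke Theorem \ref{thm:nonhyperbolic-pos} instead. Here $g_3'''(0)=6s\neq 0$, so parts (2) and (3) of that theorem apply according to the sign of $s$: if $M>0$ then $g_3'''(0)=6>0$ and the origin is unstable, whereas if $M<0$ then $g_3'''(0)=-6<0$ and the origin is asymptotically stable. These are exactly the two conclusions of Case (3).

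No step of this argument is deep; the entire proof is essentially bookkeeping of signs. The main ``obstacle'' is purely combinatorial: eight sign combinations must be enumerated across Cases (1) and (2), and one must carry the identification $s=\mathrm{sgn}(M)$ consistently through the second-derivative computation and then read off the correct semistability direction from Theorem \ref{thm:semistability}. I do not anticipate any genuine technical difficulty beyond this care in enumeration.
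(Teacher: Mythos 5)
Your proposal is correct and follows essentially the same route as the paper's own proof: compute $g_3''$ at the multiple fixed point in each case (obtaining $-2sx_1$, $2sx_1$, and $0$ respectively), note $g_3'''=6s\neq 0$ throughout, and then read off instability and the semistability direction from Theorem \ref{thm:nonhyperbolic-pos} and Theorem \ref{thm:semistability}. The only cosmetic difference is that the paper explicitly cites part (1) of Theorem \ref{thm:nonhyperbolic-pos} to justify the word ``unstable'' in Cases (1) and (2) before invoking Theorem \ref{thm:semistability}, a step you leave implicit.
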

\begin{proof}
	Notice that
	\begin{equation}
		g''_3(x)=
		\begin{cases}
			2s(x-x_1)+4sx,	&\text{if } x_2=x_0=0,\,x_1\neq0\\
			4s(x-x_1)+2sx, 	&\text{if } x_1=x_2\neq0\\
			6sx,			&\text{if } x_1=x_2=0.
		\end{cases}
	\end{equation}
	and
	\begin{equation}
		g'''_3(x)=6s\neq0,
	\end{equation}
	for all cases.
	Therefore
	\begin{enumerate}
		\item If $x_2=x_0=0,\,x_1\neq0$, the zero fixed point has multiplicity of two and we have that $g'_3(0)=1$, $g''_3(0)=-2sx_1\neq0$ and $g'''_3(0)=6s\neq0$; therefore, by theorem \ref{thm:nonhyperbolic-pos}, the zero fixed point is an \emph{unstable} fixed point. Applying theorem \ref{thm:semistability} we get the particular cases of semistability.
		\item If $x_1=x_2\neq0$, this is fixed point has multiplicity of two and we have that $g'_3(x_1)=1$, $g''_3(x_1)=2sx_1\neq0$ and $g'''_3(0)=6s\neq0$; therefore, by theorem \ref{thm:nonhyperbolic-pos}, this fixed point is \emph{unstable}; moreover, the semistability cases are inferred from theorem \ref{thm:semistability} again.
		\item If $x_0=x_1=x_2=0$, the zero fixed point has multiplicity of three and we have that $g'_3(0)=1$, $g''_3(0)=0$ and $g'''_3(0)=6s\neq0$; therefore, by theorem \ref{thm:nonhyperbolic-pos}, if $M>0$ the zero fixed point is \emph{unstable} and if $M<0$ it is \emph{asymptotically stable}.
	\end{enumerate}
\end{proof}

\section{Cubic examples}
\label{sec:CCM-examples}

Here we will deal with specific parametrizations for the fixed points $x_1$ and $x_2$ in order to clarify the above findings and to demonstrate how we can construct bifurcation diagrams with specific predetermined properties with cubic maps. We will consider $M>0$ unless otherwise stated explicitly.

\begin{example}
	\label{ex:CCM-linx1linx2}
	First, consider linear parametrizations for both $x_1$ and $x_2$ as
	
	\begin{equation}
		x_1(\lambda)=-\lambda,\quad x_2(\lambda)=\lambda.
		\label{eq:linx1linx2}
	\end{equation}
	
	The result is plotted in the lower panel of figure \ref{fig:CCM-linx1linx2}, where we see that the middle fixed point is $x_0=0$ always, so we expect this to be the only stable fixed point, until the separation between this and the other points breaks the stability condition and the period doubling bifurcation cascade sets on. The corresponding stability conditions are shown in the middle panel of figure \ref{fig:CCM-linx1linx2}, where we confirm that the curve for $x_0$ is the only one within the stability band $(-2,\,0)$ until $\lambda\approx1.45$ where the curve crosses the barrier of -2 and drops from then, causing $x_0$ to bifurcate. The corresponding bifurcation diagram is shown in the upper panel of figure \ref{fig:CCM-linx1linx2}, where we confirm the stated above.
	\begin{figure}
		\centering
		\includegraphics[width=0.9\columnwidth]{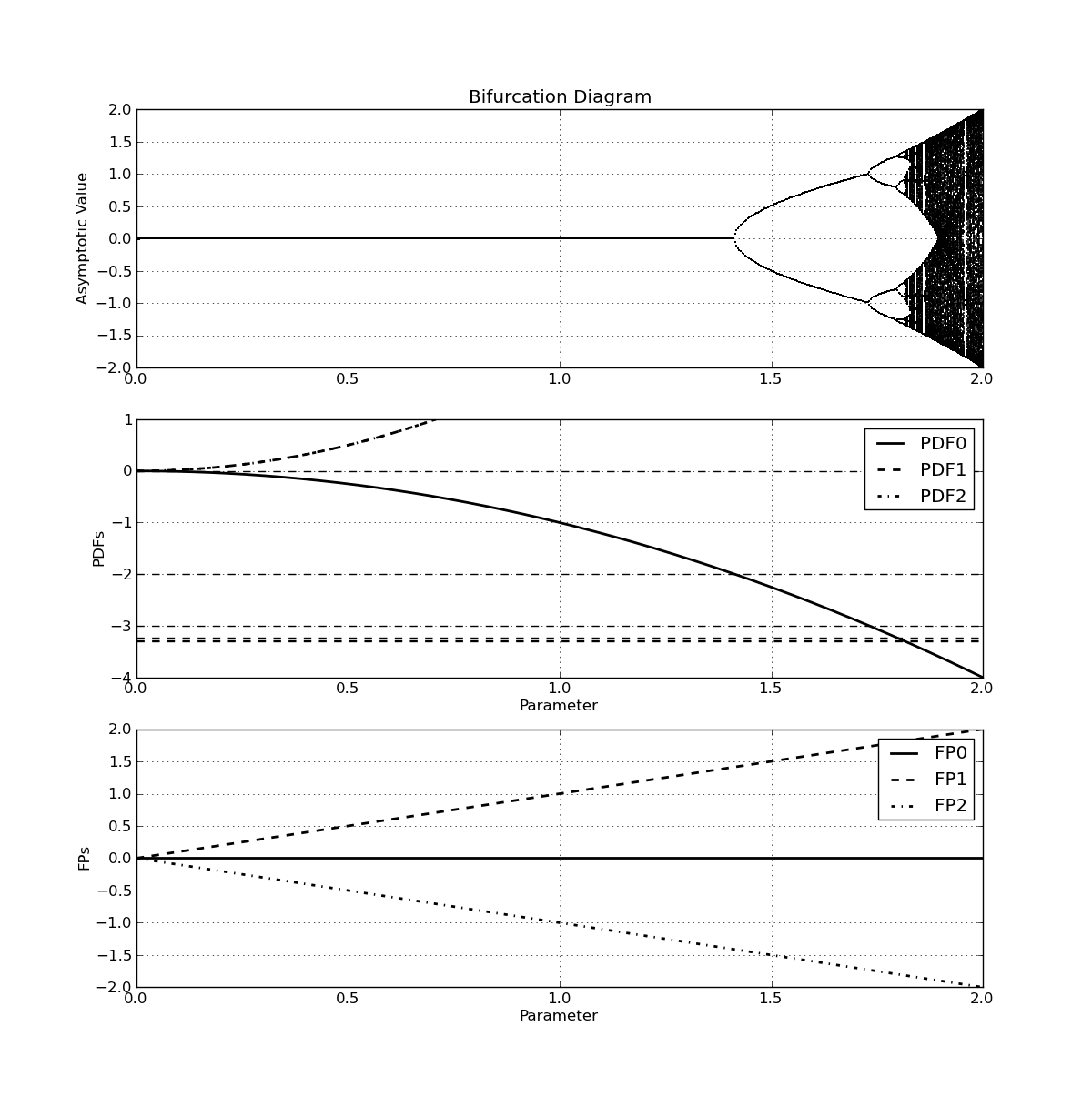}
		\caption{Bifurcation diagram (upper), stability conditions plot (middle) and fixed points plot (lower) for the linear parametrization of the fixed points example of the CCM.}
		\label{fig:CCM-linx1linx2}
	\end{figure}
\end{example}

\FloatBarrier

\begin{example}
	\label{ex:CCM-sqrtx1sqrtx2}
	
	Next, based on the fact that the stability conditions depend upon the product of the distances between the stable fixed point and the other two fixed points, we take square root parametrizations of the fixed points and so that the product is linear. We take the parametrizations as
	
	\begin{equation}
		x_1(\lambda)=\sqrt{\lambda},\quad x_2(\lambda)=-\sqrt{\lambda},
		\label{eq:CCM-sqrtx1sqrtx2}
	\end{equation}
	
	The plot of the fixed points is shown in lower panel of figure \ref{fig:CCM-sqrtx1sqrtx2}, where we see again that the middle fixed point is $x_0$, so this will be the only attracting fixed point as long as its corresponding stability curve is within the stability region. In the middle panel of figure \ref{fig:CCM-sqrtx1sqrtx2} we see that such stability curve goes out of said region at $\lambda=2$ and $x_0$ continues to bifurcate (period doubling) thereof. The corresponding bifurcation diagram is shown in the upper panel of figure \ref{fig:CCM-sqrtx1sqrtx2}, which is very much like the one of example \ref{ex:CCM-linx1linx2} above, but looks ``stretched'' since the progression towards bifurcation is now linear, instead of quadratic, and given by the stability conditions curves.
	
	\begin{figure}
		\centering
		\includegraphics[width=0.9\columnwidth]{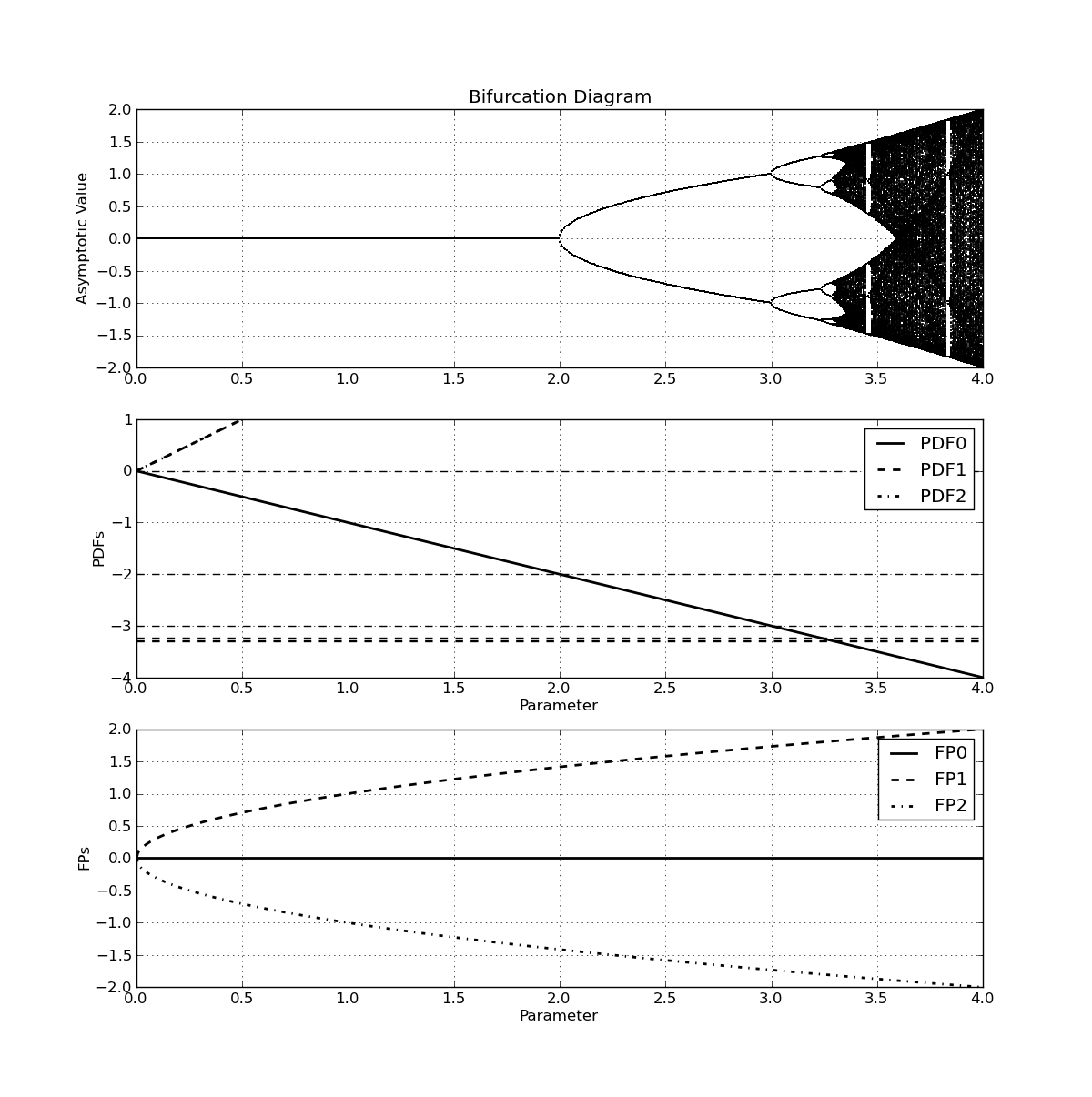}
		\caption{Bifurcation diagram (upper), stability conditions plot (middle) and fixed points plot (lower) for the square root parametrizations of the fixed points example of the CCM.}
		\label{fig:CCM-sqrtx1sqrtx2}
	\end{figure}
	
\end{example}

\FloatBarrier

\begin{example}
	\label{ex:CCM-cstx1linx2}
	
	Now we will explore the full range of stability regions by making a linearly varying fixed point pass through the regions defined by the constant $x_0$ and a constant $x_1$. We define then
	
	\begin{equation}
		x_1(\lambda)=2,\quad x_2(\lambda)=6\lambda+1,
		\label{eq:CCM-cstx1linx2}
	\end{equation}
	
	We then obtain the plot of the lower panel of figure \ref{fig:CCM-cstx1linx2}, for the selected range of interest of the parameter $\lambda$. In the middle panel of the same figure we can see the stability curves for the fixed points, where we see that initially, from left to right, all fixed points are unstable then, progressively, $x_0$, $x_2$ and $x_1$ become stable, the latter one losing stability for still greater values of $\lambda$. The corresponding bifurcation diagram is shown in the upper panel, where we can see how first, the stable fixed point is $x_0=0$, since it is the middle one, but begins in the chaotic region and goes ``reversal'' towards being stable; then, as $x_2$ crosses through zero, it becomes the middle stable fixed point and, when it in turn crosses the constant $x_2$, this latter one becomes the stable fixed point, again loosing stability when $x_2$ crosses the stability band for $x_1$.
	
	\begin{figure}
		\centering
		\includegraphics[width=0.9\columnwidth]{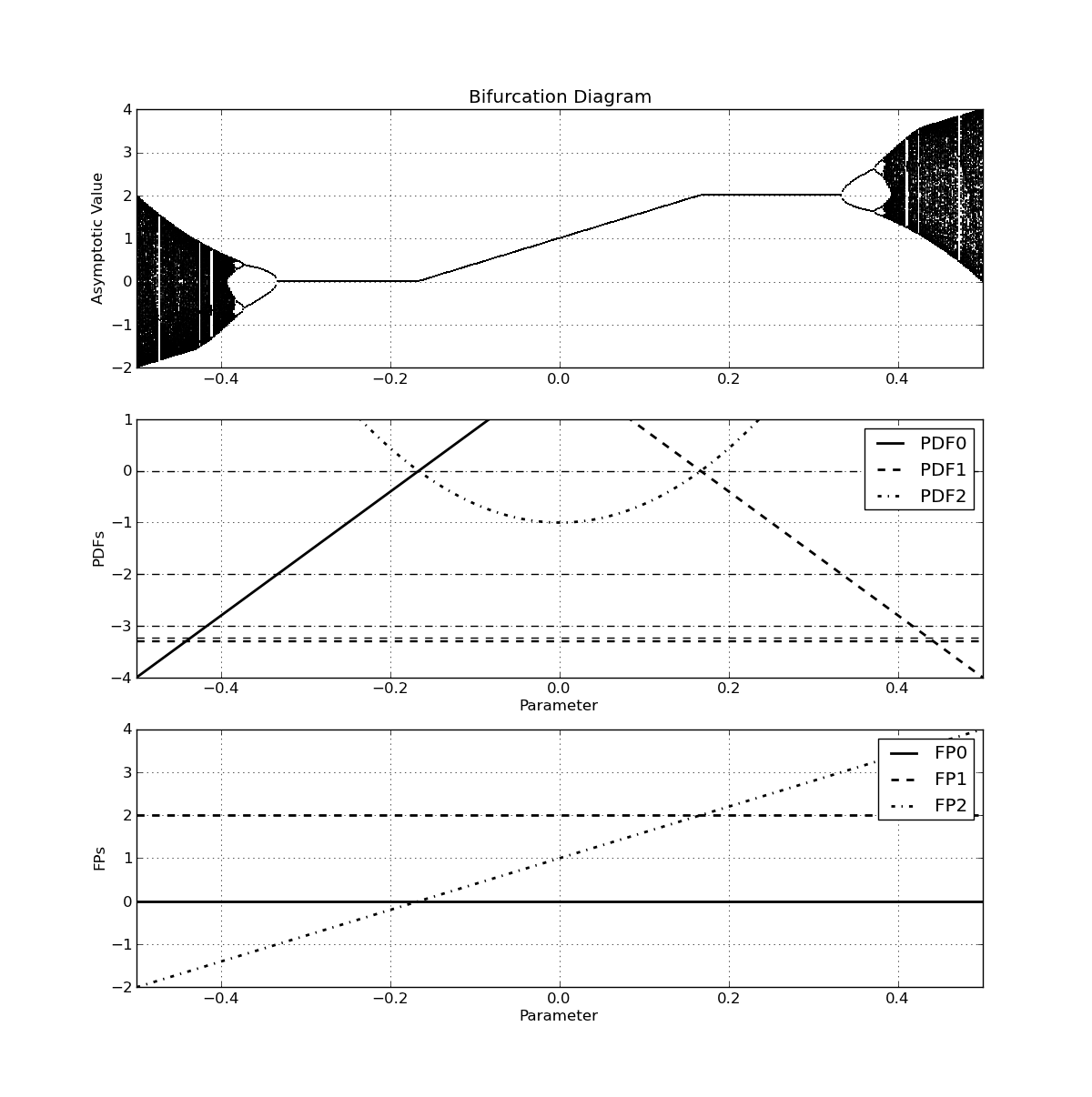}
		\caption{Bifurcation diagram (upper), stability conditions plot (middle) and fixed points plot (lower) for the constant and linear mixed parametrizations of the fixed points example of the CCM.}
		\label{fig:CCM-cstx1linx2}
	\end{figure}
	
\end{example}

\FloatBarrier

\begin{example}
	\label{ex:CCM-exponential}
	
	This example is analogous to example \ref{ex:expfp-CQM} of chapter \ref{cha:QuadRev}, where we had the $x_1$ fixed point of the CQM tend asymptotically to a value of approximately $-b_{\infty}$. To achieve the same qualitative phenomenon in the CCM, we can define the roots as
	
	\begin{equation}
		x_1(\lambda)=1.817-e^{-\lambda},\quad x_2(\lambda)=-1.817+e^{-\lambda}
	\end{equation}
	
	so that $x_2=-x_1$ and the fixed points are symmetric about the zero fixed point; their product, which gives the stability condition for the zero fixed point, will then be $-3.301489+3.634e^{-\lambda}+e^{-2\lambda}$ which will asymptotically tend to $-3.301489\approx -b_{\infty}$, as $\lambda\rightarrow\infty$; this allowed us to calculate the bifurcation values of table \ref{tab:CCM-BifurcationValues}. The lower panel of figure \ref{fig:CCM-exponential} shows the graphs of the fixed points, the middle panel shows the stability conditions for each fixed point and the upper panel shows the corresponding bifurcation diagram. This example allows us to determine the bifurcation values $c_k$ of the CCM with more precision; the values obtained are shown in table \ref{tab:CCM-BifurcationValues}.
	
	\begin{figure}
		\centering
		\includegraphics[width=0.9\columnwidth]{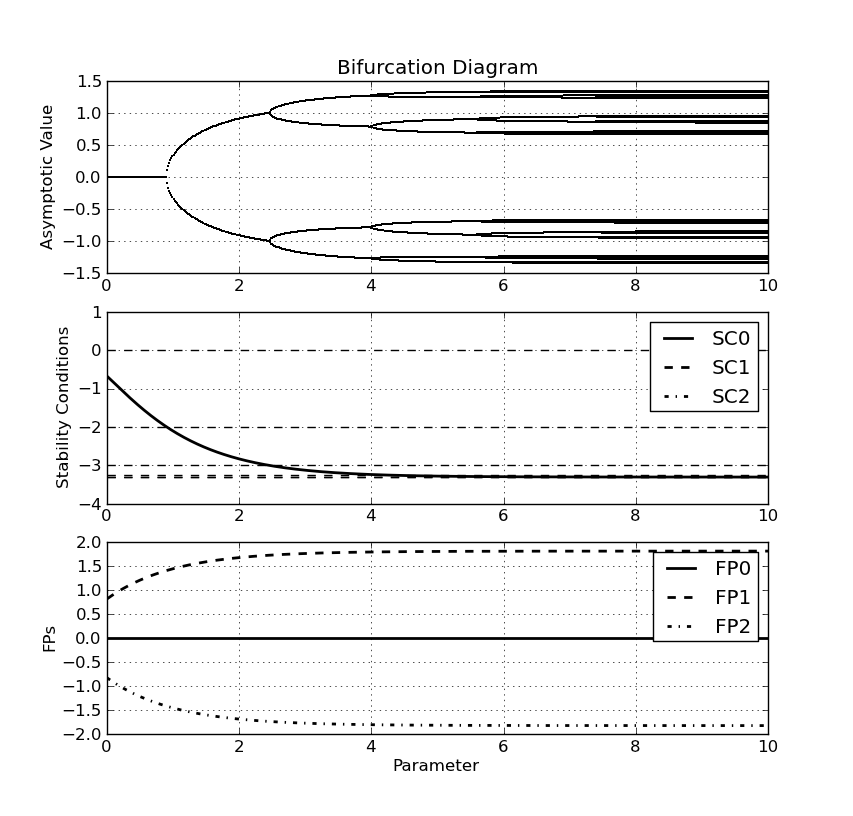}
		\caption{Bifurcation diagram (upper), stability conditions plot (middle) and fixed points plot (lower) for the exponential asymptotic parametrizations of the fixed points example \ref{ex:CCM-exponential} for the CCM.}
		\label{fig:CCM-exponential}
	\end{figure}
	
\end{example}

\chapter{\textit{n}-th degree polynomial maps}
\label{cha:Generalization}

Consider again a one-dimensional discrete dynamical system defined by
\begin{equation}
	y_{n+1}=f(y_n;\lambda)
\end{equation}

Where $f$ is a polynomial in one real variable $y$ with real fixed points and whose coefficients depend smoothly on the real parameter $\lambda$. Depending on the form of $f$ we have arrived previously to the definitions of the General, Linear Factors and Canonical Forms of the polynomial quadratic and cubic maps. Next, we will define precisely what we understand by the General and Canonical Maps of $n$-th degree.

\begin{defn}[General Polynomial Map]
	The General Polynomial Map of $n$-th degree (GPM-$n$) is defined by
	\begin{equation}
		f_n(y):= y+ (-1)^{n-1} P_{f_n}(y)
	\end{equation}
	where
	\begin{equation}
		P_{f_n}(y):=(-1)^{n-1}\sum_{i=0}^{n} a_i\,y^i.
	\end{equation}
	$P_{f_n}$ is called the \emph{Fixed Points Polynomial} associated to $f_n$.
\end{defn}

It is obvious that any $n$-th degree real polynomial in one variable can be put into the General Form by means of adjusting the value of the $a_1$ coefficient properly in the fixed points polynomial. This is the broadest class of real polynomials of finite degree.

It is clear that the roots of $P_{f_n}$ are the fixed points of $f_n$. In the case of $n$ odd, the fundamental theorem of algebra guarantees the existence of at least one real fixed point. Let $y_i\in\mathbb{C},\,i\in\{0,1,\,...,\,n-1\}$, be the $n$ roots of $P_{f_n}$, then $(y-y_i)$ is a factor of $P_{f_n}$ by the factor theorem, therefore we can rewrite $P_{f_n}$ as

\begin{equation}
	\label{eq:CPM-FPP}
	P_{f_n}(y)=M (y-y_0)\cdots (y-y_{n-1})=M\prod_{j=0}^{n-1}(y-y_j),\quad M\in\mathbb{R},
\end{equation}

and then define the

\begin{defn}[Linear Factors Form]
	\label{def:LinearFactorsForm}
	Let $f_n$ be the GPM-$n$ and $y_j$, $j\in\{0,\,...,\,n-1\}$, its $n$ fixed points. Then we can write
	\begin{equation}
		\begin{aligned}
			f_n(y)	&=y+(-1)^{n-1}\,M\,\prod_{i=0}^{n-1} (y-y_i),\\
			&=y+(-1)^{n-1}\mathrm{sgn}(M)|M|\prod_{i=0}^{n-1} (y-y_i),\\
			&=y+(-1)^{n-1}s\tilde{M}\prod_{i=0}^{n-1} (y-y_i),\\
		\end{aligned}
	\end{equation}
	with the definitions of $s$ and $\tilde{M}$ used in chapters \ref{cha:QuadRev} and \ref{cha:Cubic}. We call this the \emph{Linear Factors Form} of $f_n$.
\end{defn}

We can directly verify that for $n=2,\,3$ we obtain the corresponding Linear Factors Form of the Quadratic and Cubic Maps, described on pages \pageref{def:LFFQM} and \pageref{def:LFFCM}, respectively. Once we know the $n$ fixed points of a map $f_n$, it is straightforward to write its linear factors form. The motivation behind the $(-1)^{n-1}$ factor is that we want that, for purely aesthetic reasons, if $M>0$, the fixed points are real and $0<y_0<y_1<\cdots<y_{n-1}$, we have that $f'_n(0)\geq0$, which is thus accomplished.

We will now restrict this set of polynomials to those whose fixed points polynomials have only real roots, i.e. maps with real fixed points only, though not necessarily distinct. Let us make this precise by the following

\begin{defn}[Canonical Polynomials Set]
	The Canonical Polynomials Set, denoted $P_C[y]$, is
	\begin{equation}
		P_C[y]:=\{f\in\mathbb{R}[y]\,\vert\,\text{$P_f$ has only real roots.}\}
	\end{equation}
	where $\mathbb{R}[y]$ is the set of polynomials with real coefficients on the variable $y$. Likewise, $P_C^n[y]$ denotes $P_C[y]\bigcap\mathbb{R}_n[y]$ where $\mathbb{R}_n[y]$ is the set of polynomials of degree $n$ with real coefficients on the variable $y$.
\end{defn}

The set $P_C$ has been our main work ground for the analysis in this work and, as it turns out, its elements can be put in a much nicer form, easier to understand.

We can further reduce the complexity of this set of maps by means of the transformation

\begin{equation}
	y=T_n(x):=s\tilde{M}^{-\frac{1}{n-1}}x+y_0
\end{equation}

where $y_0$ is a real fixed point of the map in its linear product form. Notice that $T_n$ is linear, therefore it has an inverse

\begin{equation}
	x=T_n^{-1}(y)=s\tilde{M}^{\frac{1}{n-1}}(y-y_0).
\end{equation}

$T_n$ is in fact a homeomorphism. We will drop the subscript $n$ when referring to the transformation for no specific degree. Applying this transformation to $y$ we reach the

\begin{defn}[Canonical Polynomial Map]
	\label{def:CanonicalPolynomialMap}
	The Canonical Polynomial Map of $n$-th degree (CPM-$n$) is
	\begin{equation}
		g_n(x):=x+(-1)^{n-1}\,s^n\,x\,\prod_{i=1}^{n-1}(x-x_i),\quad n\geq 2,
	\end{equation}
	where
	\[x_i=s\tilde{M}^{\frac{1}{n-1}}(y_i-y_0),\]
	and $y_j$ are the $n$ fixed points of the corresponding linear factors form map of $n$-th degree, (at least) $y_0$ being real.
\end{defn}

It is clear from the definition that $x_0=0$ always. Notice also that the $x_i$ result from evaluating $T_n^{-1}$ in the corresponding $y_i$. We can easily prove that not only does the canonical map result from applying $T$, but that the canonical map is in fact $T$-conjugate to the linear factors form.

\begin{prop}
	Let $f_n$ and $T_n$ and $g_n$ as defined above, having $f_n$ at least one real fixed point; let $y_0$ be this real fixed point, without loss of generality. Then $f_n$ is $T_n$-conjugate to $g_n$.
\end{prop}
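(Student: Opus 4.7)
The plan is to establish the two defining requirements of a topological conjugacy: first, that $T_n$ is a homeomorphism of $\mathbb{R}$ onto $\mathbb{R}$, and second, that the intertwining identity $T_n\circ g_n=f_n\circ T_n$ holds pointwise in $x$.

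The first requirement is immediate: since $\tilde{M}\neq 0$, the affine map $T_n(x)=s\tilde{M}^{-1/(n-1)}x+y_0$ has nonzero slope, so it is a continuous bijection of $\mathbb{R}$ with continuous inverse $T_n^{-1}(y)=s\tilde{M}^{1/(n-1)}(y-y_0)$, and hence a homeomorphism.

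For the intertwining identity I would proceed by direct substitution. The crucial intermediate formula is that, for every $i\in\{0,\ldots,n-1\}$,
\begin{equation*}
T_n(x)-y_i=s\tilde{M}^{-1/(n-1)}(x-x_i),
\end{equation*}
which follows from the defining relation $x_i=s\tilde{M}^{1/(n-1)}(y_i-y_0)$ together with $s^{-1}=s$; the case $i=0$ reduces to $T_n(x)-y_0=s\tilde{M}^{-1/(n-1)}x$, since $x_0=0$. Taking the product over all $i$ yields
\begin{equation*}
\prod_{i=0}^{n-1}(T_n(x)-y_i)=s^n\tilde{M}^{-n/(n-1)}\,x\prod_{i=1}^{n-1}(x-x_i),
\end{equation*}
so substitution into the Linear Factors Form of $f_n$ and the exponent simplification $1-n/(n-1)=-1/(n-1)$ give $f_n(T_n(x))=T_n(x)+(-1)^{n-1}s^{n+1}\tilde{M}^{-1/(n-1)}\,x\prod_{i=1}^{n-1}(x-x_i)$. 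Expanding the other side, $T_n(g_n(x))=s\tilde{M}^{-1/(n-1)}g_n(x)+y_0$, and plugging in the definition of $g_n$ produces precisely the same expression, because the extra factor $s$ combines with the $s^n$ inside $g_n$ to yield $s^{n+1}$.

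The one point demanding genuine care is the bookkeeping of the exponents of $\tilde{M}$ and of the parity of $s$. The equalities $1-n/(n-1)=-1/(n-1)$ and $s\cdot s^{n}=s^{n+1}$ must match between the two sides; this is precisely the algebraic motivation for the normalization exponent $1/(n-1)$ appearing in $T_n$ and for the factor $(-1)^{n-1}s^{n}$ in the definition of $g_n$. Once these choices are made, the proposition reduces to an elementary polynomial identity built into the very construction of $T_n$ and $g_n$, and no further technical obstacle arises.
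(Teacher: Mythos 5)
Your proof is correct and follows essentially the same route as the paper's: establish that the affine map $T_n$ is a homeomorphism and then verify the intertwining identity $f_n\circ T_n=T_n\circ g_n$ by direct substitution, with your intermediate formula $T_n(x)-y_i=s\tilde{M}^{-1/(n-1)}(x-x_i)$ being exactly the manipulation the paper performs inside the product. The only cosmetic difference is that the paper writes the sign factor as $s^{n-1}$ where you write $s^{n+1}$, which agree since $s^2=1$.
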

\begin{proof}
	It is clear that $T_n$ is a homeomorphism since it is linear. Then, we must only prove that $T_n\circ f_n = g_n\circ T_n$, i.e. $f_n(T_n(x))=T_n(g_n(x))$. We then have
	\begin{equation}
		\begin{aligned}
			f_n(T_n(x))	&=f_n(s\tilde{M}^{-\frac{1}{n-1}}x+y_0)\\
			&=s\tilde{M}^{-\frac{1}{n-1}}x+y_0 + (-1)^{n-1}\,s\tilde{M}\,\prod_{i=0}^{n-1} (s\tilde{M}^{-\frac{1}{n-1}}x+y_0-y_i)\\
			&=s\tilde{M}^{-\frac{1}{n-1}}x+(-1)^{n-1}\,s^2\tilde{M}\,\tilde{M}^{-\frac{1}{n-1}}\,x\,\prod_{i=1}^{n-1}(s\tilde{M}^{-\frac{1}{n-1}}x+y_0-y_i)+y_0\\
			&=s\tilde{M}^{-\frac{1}{n-1}}x+(-1)^{n-1}\,s^{n-1}\tilde{M}^{-\frac{1}{n-1}}\,x\,\prod_{i=1}^{n-1}\left[x-s\tilde{M}^{\frac{1}{n-1}}(y_i-y_0)\right]+y_0\\
			&=s\tilde{M}^{-\frac{1}{n-1}}x+(-1)^{n-1}\,s^{n-1}M^{-\frac{1}{n-1}}\,x\,\prod_{i=1}^{n-1}\left(x-x_i\right)+y_0\\
			&=s\tilde{M}^{-\frac{1}{n-1}} \left[ x+(-1)^{n-1}\,s^n x\,\prod_{i=1}^{n-1}\left(x-x_i\right)\right]+y_0\\
			&=T_n(g_n(x)).
		\end{aligned}
	\end{equation}
	Where we have used $s^2=1$ and $s=s^{-1}$.
\end{proof}

This turns out to be very useful, since we know that topological conjugacy is an equivalence relation that \emph{preserves the property of chaos} (see theorem \ref{thm:Conjugacy} in page \pageref{thm:Conjugacy}). This means that the analysis of stability and chaos (i.e. the ``dynamics'') of real polynomial maps with real fixed points is reduced to the study of the canonical polynomial maps defined above, since we can always take any polynomial in $P_C[x]$ to its canonical form by means of $T$, determine the stability properties and then go back to the original polynomial. A commutative diagram of the conjugacy is

\begin{equation}
	\begin{CD}
		y		@=	y				@>{f_n}>>	f_n(y)		@= 	f_n(y)\\
		@A{T}AA		@VV{T^{-1}}V				@A{T}AA			@VV{T^{-1}}V\\
		x		@=	x				@>>g_n>		g_n(x)		@=	g_n(x)
	\end{CD}
\end{equation}

\section{Stability and chaos in the canonical map of degree \textit{n}}

The derivative of $g_n$, recalling $x_0=0$ to simplify notation, is

\begin{equation}
	g_n'(x)=1+(-1)^{n-1}\,s^n\,\sum_{j=0}^{n-1}\prod_{i=0,\,i\neq j}^{n-1}(x-x_i).
	\label{eq:CPM-derivative1}
\end{equation}

Evaluating \eqref{eq:CPM-derivative1} in the fixed point $x_k$ we get the eigenvalue function for each $x_k$

\begin{equation}
	\begin{aligned}
		\phi_k(\lambda)=g_n'(x_k(\lambda))	&=1+(-1)^{n-1}\,s^n\,\prod_{i=0,\,i\neq k}^{n-1}(x_k(\lambda)-x_i(\lambda)) \\
		&=1+s^n\,\prod_{i=0,\,i\neq k}^{n-1}(x_i(\lambda)-x_k(\lambda)). \\
	\end{aligned}
\end{equation}

Then, the asymptotic stability condition $\vert g_n'(x_k)\vert <1$ implies that

\begin{equation}
	-2<s^n\,\prod_{i=0,\,i\neq k}^{n-1}(x_i-x_k)<0.
	\label{eq:CPM-stbcond}
\end{equation}

From \eqref{eq:CPM-stbcond} we can recover all the stability conditions for the fixed points of the Canonical Quadratic and Cubic Maps.

which leads us to the following

\begin{defn}[Product Distance Function]
	Let $g_n$ be the Canonical Polynomial Map of $n$-th degree and $x_0=0,\,x_1,\,...,\,x_{n-1}$ its $n$ fixed points, all of which depend upon the parameter $\lambda$. Let $x_k$ be a real fixed point among the latter. Then
	\begin{equation}
		D_{n,k}(\lambda):=s^n\,\prod_{i=0,\,i\neq k}^{n-1}(x_i(\lambda)-x_k(\lambda)),\quad k\in\{0,\,...,\,n-1\},\,n\geq2,
	\end{equation}
	is called the \emph{Product Distance Function (PDF)} of $x_k$.
\end{defn}

The definition is motivated by the fact that $D_{n,k}$ is a product of the distances to $x_k$ of each of the other $n-1$ fixed points and that this quantity is fundamental in determining the stability of the fixed points. These distances are positive when $x_i>x_k$ and is negative when $x_i<x_k$. We have stressed the dependence on the parameter $\lambda$ in the definition of $D_{n,k}$ so that its character as a function is clear, stemming from the corresponding dependence on $\lambda$ of the fixed points. In this way, the stability condition for the $k$-th fixed point is reduced to

\begin{equation}
	-2<D_{n,k}(\lambda)<0.
\end{equation}

Since $D_{n,k}$ must be negative in order for $x_k$ to be stable as a sufficient condition, and an \emph{odd} number of factors $(x_i-x_k)$ must be negative for the product in $D_{n,k}$ to be negative, it follows that, if $n$ is even or $M>0$, an odd number of negative factors $(x_i-x_k)$ is a sufficient condition for the hyperbolic fixed point $x_k$ to be stable; i.e. if $M>0$, an \emph{odd} number of fixed points must lie below $x_k$ and, consequently, an even or zero (respectively, odd) number of fixed points must lie above $x_k$ if $n$ is even (respectively, odd). By similar arguments, we can prove

\begin{prop}[Necessary conditions for the stability of $x_k$]
	\label{prop:GPM-NecessaryConditions}
	Let $g_n$, $D_{n,k}$ be defined as above and $x_k$ be a hyperbolic real fixed point of $g_n$. The following are necessary conditions for $x_k$ to be an asymptotically stable fixed point:
	\begin{itemize}
		\item if $M>0$ or $n$ is even, an odd number of fixed points must have values lower than $x_k$; or
		\item if $n$ is odd and $M<0$, zero or an even number of fixed points must have values lower than $x_k$.
	\end{itemize}
\end{prop}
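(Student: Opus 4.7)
The plan is to extract the required parity condition directly from the stability inequality $-2 < D_{n,k}(\lambda) < 0$ established on the previous page. Only the upper bound $D_{n,k}(\lambda) < 0$ matters, since this is the part of the stability window that forces a sign constraint; the lower bound $-2 < D_{n,k}(\lambda)$ only gives a magnitude constraint and is irrelevant for counting signs. So the entire proof reduces to reading off the sign of
\[
D_{n,k}(\lambda) = s^{n}\prod_{i=0,\, i\neq k}^{n-1}\bigl(x_{i}(\lambda)-x_{k}(\lambda)\bigr)
\]
under the hypothesis that $x_k$ is hyperbolic.

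The first step is to count negative factors. Since $x_k$ is a hyperbolic real fixed point, none of the other $x_i$ coincide with $x_k$ (otherwise $D_{n,k}=0$ and $\phi_k=1$, contradicting hyperbolicity). Thus each factor $(x_i-x_k)$ has a well-defined nonzero sign, and is negative precisely when $x_i<x_k$. Letting $\nu$ denote the number of indices $i\neq k$ with $x_i<x_k$, the product has sign $(-1)^{\nu}$, so
\[
\mathrm{sgn}\bigl(D_{n,k}(\lambda)\bigr) = s^{n}\,(-1)^{\nu}.
\]

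The second step is a case split on $(n,M)$ via the sign factor $s^n$. Because $s=\mathrm{sgn}(M)\in\{-1,+1\}$, I have $s^n=1$ whenever $n$ is even, and also $s^n=1$ when $n$ is odd and $M>0$; only when $n$ is odd and $M<0$ does $s^n=-1$. Imposing $D_{n,k}<0$ then gives in the first regime $(-1)^{\nu}<0$, forcing $\nu$ to be odd, and in the second regime $(-1)^{\nu}>0$, forcing $\nu$ to be even (zero is allowed). These are precisely the two bullets of the statement, so the proposition follows.

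The hard part will really be nothing more than careful sign bookkeeping: the content of the proposition is combinatorial, and once $D_{n,k}$ is written as a signed product the argument is essentially a one-line parity observation. The only subtlety worth flagging in the write-up is the implicit non-degeneracy assumption $x_i\neq x_k$ for $i\neq k$, which is automatic from the hyperbolicity hypothesis and avoids trivial counterexamples coming from multiplicity among the fixed points.
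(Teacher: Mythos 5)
Your proof is correct and follows essentially the same route as the paper, which derives the proposition from the sign analysis of the stability condition $-2<D_{n,k}<0$: the paper's preceding paragraph observes that $D_{n,k}<0$ forces an odd number of negative factors $(x_i-x_k)$ when $s^n=1$ and then says the proposition follows ``by similar arguments.'' Your write-up is in fact more careful than the paper's sketch, in particular in isolating the factor $s^n$ explicitly and in noting that hyperbolicity rules out $x_i=x_k$ so that each factor has a well-defined sign.
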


We must remark that the above conditions are \emph{not} sufficient for a fixed point to be an attractor. The sufficient condition, however, is stated as

\begin{thm}[Sufficient condition for the stability of $x_k$]
	Let $g_n$, $D_{n,k}$ be defined as above and $x_k$ be a hyperbolic real fixed point of $g_n$. Then, a necessary and sufficient condition for $x_k$ to be an attractor is that
	\begin{equation}
		-2<D_{n,k}(\lambda)<0.
	\end{equation}
\end{thm}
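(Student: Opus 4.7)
The plan is to reduce the whole statement to a direct computation of $g_n'(x_k)$ and then match it against the definition of the Product Distance Function. Since $x_k$ is assumed hyperbolic, Definition \ref{def:attractor-repellor} tells us that $x_k$ is an attractor if and only if $|g_n'(x_k)|<1$, so the entire claim collapses to proving the identity $g_n'(x_k) = 1 + D_{n,k}(\lambda)$ and then unpacking the absolute value inequality.

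First, I would apply the product rule to the canonical form in Definition \ref{def:CanonicalPolynomialMap}. Writing the product over all fixed points (including $x_0 = 0$) gives $g_n(x) = x + (-1)^{n-1} s^n \prod_{i=0}^{n-1}(x-x_i)$, from which Equation \eqref{eq:CPM-derivative1} yields
\[g_n'(x) = 1 + (-1)^{n-1} s^n \sum_{j=0}^{n-1}\prod_{i\neq j}(x - x_i).\]
The key observation is that when this sum is evaluated at $x = x_k$, every inner product with $j \neq k$ contains the factor $(x_k - x_k) = 0$, so only the single term with $j = k$ survives. That gives $g_n'(x_k) = 1 + (-1)^{n-1} s^n \prod_{i \neq k}(x_k - x_i)$.

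Next, I would flip each of the $n-1$ differences in the surviving product: since $\prod_{i\neq k}(x_k - x_i) = (-1)^{n-1}\prod_{i\neq k}(x_i - x_k)$, the two factors of $(-1)^{n-1}$ cancel and we obtain $g_n'(x_k) = 1 + s^n \prod_{i\neq k}(x_i - x_k) = 1 + D_{n,k}(\lambda)$, exactly matching the PDF. Substituting this into the hyperbolic attractor criterion turns $|g_n'(x_k)|<1$ into $|1+D_{n,k}(\lambda)|<1$, which unfolds to $-2 < D_{n,k}(\lambda) < 0$; this chain of equivalences delivers both necessity and sufficiency simultaneously.

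The proof is essentially a bookkeeping exercise rather than a deep argument, so there is no real obstacle. The only places to be careful are tracking the two separate signs $(-1)^{n-1}$ and $s^n$ through the product rearrangement, and noting that hyperbolicity excludes the degenerate situation in which $x_k$ coincides with some other $x_i$ (for then $D_{n,k}(\lambda) = 0$, forcing $g_n'(x_k) = 1$ and violating the hyperbolicity hypothesis); the multiplicity cases are handled separately, as in the analogous discussions for the CQM and the CCM in the previous chapters.
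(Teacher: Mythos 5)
Your proposal is correct and follows essentially the same route as the paper, which derives $\phi_k(\lambda)=g_n'(x_k)=1+s^n\prod_{i\neq k}(x_i-x_k)=1+D_{n,k}(\lambda)$ from Equation \eqref{eq:CPM-derivative1} by the same surviving-term and sign-flip argument, and then reads off $-2<D_{n,k}(\lambda)<0$ from $\vert g_n'(x_k)\vert<1$. Your added remark that hyperbolicity rules out $D_{n,k}(\lambda)=0$ (coincident fixed points) is a sensible clarification consistent with the paper's separate treatment of multiplicity.
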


Below the value of -2 there are other ``stability bands'' that lead to further period doubling bifurcations of the fixed points as they are crossed, but they must be calculated numerically and, as we have seen, depend on the degree $n$ of the polynomial.

\section{Example}

We will now give an example in which the above theory is applied to different degree polynomial maps.

\begin{example}[Quartic Maps]
	
	Using definition \ref{def:LinearFactorsForm} for $n=4$, we have that
	\begin{equation}
		f_4(y)=y-M\,(y-y_0)(y-y_1)(y-y_2)(y-y_3).
	\end{equation}
	Suppose $f_4$ has at least one real fixed point. Without loss of generality, suppose this fixed point is $y_0$. Then
	\[T_4(x)=s\tilde{M}^{-1/3}\,x+y_0.\]
	Making the substitution $y=T_4(x)$ we can verify that we get
	\[g_4(x)=x-x(x-x_1)(x-x_2)(x-x_3),\]
	where
	\[x_i=s\tilde{M}^{1/3}(y_i-y_0),\quad i\in\{0,\,1,\,2,\,3\},\]
	The stability a real fixed point $x_k$ is given by the product distance function
	\[D_{4,k}(\lambda)=\prod_{i=0,\,i\neq k}^3 (x_i(\lambda)-x_k(\lambda)),\]
	whose value must remain between minus two and zero in order for $x_k$ to be asymptotically stable; that is, if all fixed points are real,
	\begin{equation}
		\begin{aligned}
			-2 	&< D_{4,0}(\lambda)	= x_1\,x_2\,x_3 			&< 0,\\
			-2 	&< D_{4,1}(\lambda)	= -x_1(x_2-x_1)(x_3-x_1) 	&< 0,\\
			-2 	&< D_{4,2}(\lambda)	= -x_2(x_1-x_2)(x_3-x_2) 	&< 0,\\
			-2 	&< D_{4,3}(\lambda)	= -x_3(x_1-x_3)(x_2-x_3) 	&< 0,
		\end{aligned}
	\end{equation}
	for 0, $x_1,\,x_2$ and $x_3$ to be asymptotically stable fixed points, respectively.
	
	For example, let
	\begin{equation}
		\begin{aligned}
			x_1(\lambda) &= \lambda,\\
			x_2(\lambda) &= -\lambda,\\
			x_3(\lambda) &= 2\,\lambda.
		\end{aligned}
	\end{equation}
	
	The plots of these fixed points with their corresponding parametric dependence on $\lambda$ are shown in the lower panel of figure \ref{fig:CPM-Quartic}.
	
	In light of proposition \ref{prop:GPM-NecessaryConditions} we expect only $x_0$ and $x_3$ to be able to be asymptotically stable fixed points in any given range of $\lambda$. As the middle panel of figure \ref{fig:CPM-Quartic} shows, precisely $x_0$ and $x_3$ are the fixed points whose product distances cross the stability band $(-2,\,0)$ in the range of $\lambda$ being plotted. As we recall, the product distance functions are the ``stability conditions'' of the fixed points. As long as the product distances remain within the stability interval, the fixed points are attractors, as we can verify in the upper panel of figure \ref{fig:CPM-Quartic}; also in this last panel, we can see the two attracting fixed points at the beginning of the plotted range; then, first $x_3$ loses its stability and gives rise to the period doubling bifurcations cascade which leads to chaotic behavior; later, zero also loses its stability and also gives rise to period doubling and chaos.
	
	\begin{figure}
		\centering
		\includegraphics[width=0.9\columnwidth]{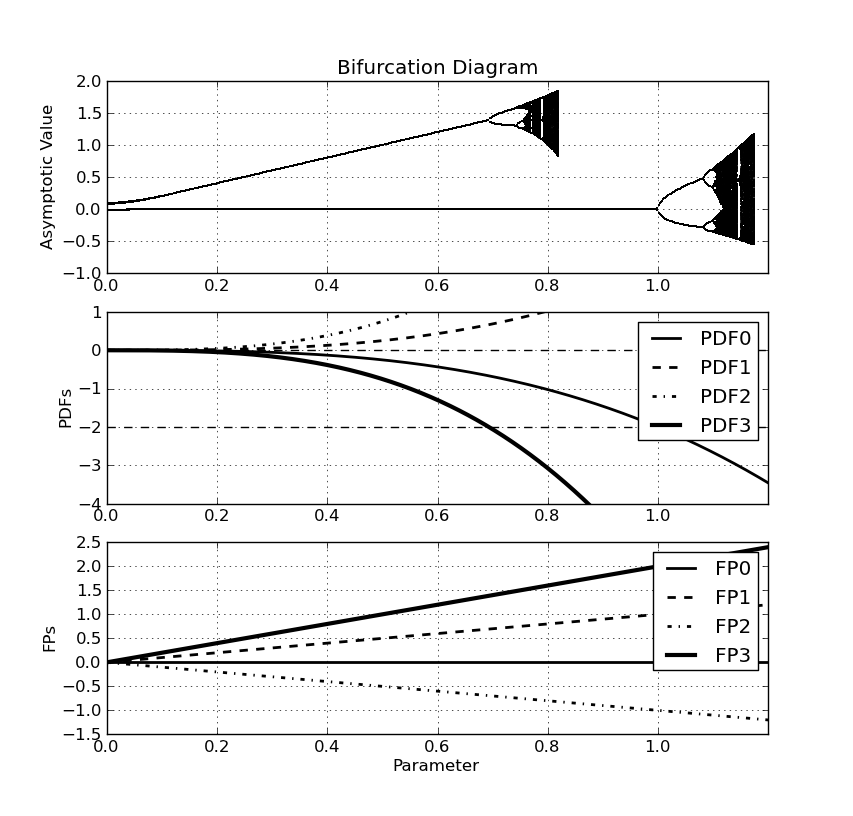}
		\caption{Bifurcation diagram (upper), Product Distance Functions (middle) and fixed points plot of the quartic polynomial map example.}
		\label{fig:CPM-Quartic}
	\end{figure}
	
\end{example}

\FloatBarrier
\chapter{An example application of the canonical map theory}
\label{cha:Applications}

We shall now consider an example application of the theory developed in chapters \ref{cha:QuadRev} through \ref{cha:Generalization} to a couple of well known examples of discrete dynamical systems. This applications demonstrate a new method of solution for analyzing the dynamics of real polynomial maps that simplifies upon the procedures found in current literature.

\section{The logistic map}

The logistic map is the most immediate and obvious example application. We had already encountered briefly the logistic map, $L_{\lambda}(x)=\lambda x(1-x)$, in subsection \ref{sub:PeriodDoubling} of chapter \ref{cha:Intro} and again in example \ref{ex:logistic} of chapter \ref{cha:Quadratic}, where we saw that the map undergoes a series of period doubling bifurcations starting at the value of $\lambda=3$, ultimately achieving a chaotic nature at $\lambda\approx 3.570$ \cite[p. 47]{Elaydi2008}. We will now proceed to utilize the theory developed in the previous chapters, particularly in chapter \ref{cha:QuadRev} and generalized in chapter \ref{cha:Generalization}, to this particular map.

We already know (or can easily calculate) that the fixed points of the logistic map are $y_0=0$ and $y_1=\frac{\lambda-1}{\lambda}$ \cite[p. 43]{Elaydi2008}. With some minor algebra, we can then check that in the corresponding Linear Factors Form of the logistic map is then

\begin{equation}
	h_\lambda(x)= x - \lambda x \left(x - \frac{\lambda-1}{\lambda}\right)
\end{equation}

where we can identify the functions of the parameter $M$, $y_0$ and $y_1$ from definition \ref{def:LFFQM}, on page \pageref{def:LFFQM}, as

\begin{align}
	s &= +1,\\
	\tilde{M}(\lambda) &= \lambda,\\
	y_0(\lambda) &= 0,\\
	y_1(\lambda) &= \frac{\lambda-1}{\lambda}.
\end{align}

The corresponding non-zero fixed point of the canonical logistic map, $x_1(\lambda)=s\tilde{M}(y_1-y_0)$ defined in \eqref{eq:x1dependence}, is then simply

\begin{equation}
	x_1(\lambda)=\lambda-1,
\end{equation}

from which we can state that the \emph{canonical logistic map} takes the explicit form

\begin{equation}\label{eq:CanonicalLogistic}
	g_{\lambda}=x-x(x-\lambda+1).
\end{equation}

In order to determine the stability properties of these fixed points, both zero and nonzero, in the canonical logistic map, it is then sufficient, as we have proved in chapters \ref{cha:QuadRev} and \ref{cha:Generalization}, to observe the behavior of the Product Distance Functions (PDFs) of these fixed points, viz.

\begin{align}
	D_{g, 0}(\lambda) &= x_1-0 = x_1 = \lambda -1,\\
	D_{g, 1}(\lambda) &= 0-x_1 = -x_1 = 1-\lambda.
\end{align}

By determining when these PDFs cross the stability bands whose boundaries are shown in table \ref{tab:CQM-BifurcationValues} we can readily determine when these fixed points are stable or unstable, when they bifurcate and when they reach any $2^n$ attracting periodic orbit for any $n$, up to crossing the $b_{\infty}$ band. This whole process is depicted in figure \ref{fig:dbif_logistic}. In particular, we can see from table \ref{tab:CQM-StabilityConditions}, and again at figure \ref{fig:dbif_logistic}, that when $-1<x_1<0$, the zero fixed point is attracting since its PDF lies within the first stability band, then exchanges stability at $x_1=0$, when this last FP becomes stable and proceeds to a period doubling cascade upon its PDF, $-x_1$, crossing the bands defined by the bifurcation values $-b_1$, $-b_2$, etc. until reaching $x_1=b_{\infty}\approx 2.569941$. This last value, by the way, agrees quite well and improves upon the approximation reported in \cite{Elaydi2008} of 3.570 for the logistic map, since with the calculations of the present work $\lambda_{\infty}=1+b_{\infty}\approx 3.569941\pm5\times10^{-7}$. Finally, since these maps are topologically conjugate and it is known that the logistic map is chaotic starting with $\lambda=4$ \cite{Elaydi2008}, we conclude that the CQM must be so starting from $x_1=3$, which we may define as $b_c$, our final ``bifurcation'' value.

\begin{figure}
	\centering
	\includegraphics[width=0.8\columnwidth]{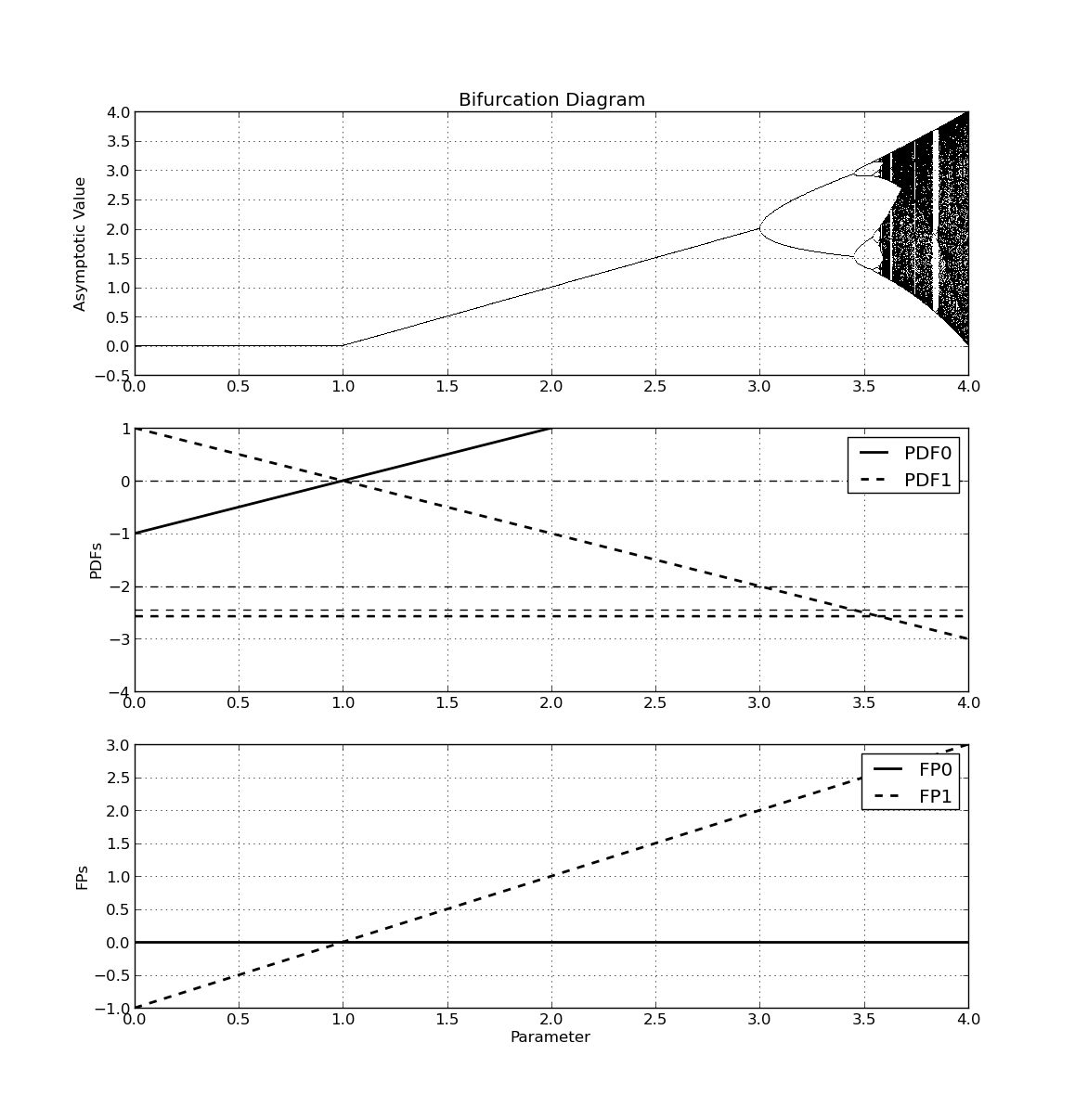}
	\caption{Bifurcation diagram for the canonical logistic map.}
	\label{fig:dbif_logistic}
\end{figure}

\section{Harvesting strategies}

The connection of the logistic map with population models is old and well known. In the book by \cite{Sandefur}, there are a few examples of uses of second degree polynomials as recurrence functions which are used to model ``harvesting'' --or hunting-- strategies of a population of animals. The main idea is that the population of animals grows whenever there is food and resources in the environment which, by account of its finite resources has a certain ``carrying capacity''; this leads to a maximum population this environment can hold, the population growing according to the logistic model. The population may then be ``harvested'' --or hunted-- at a certain rate yet to be specified and, depending on this rate, it is not hard to imagine that the final fate of the population of animals may be (i) extinction, if the rate is too high; (ii) steady population below the carrying capacity, if the rate is ``just right''; or (iii) steady population at its maximum value dictated by the carrying capacity of the environment. A final fourth possibility --perhaps more rare-- is to (iv) bring more individuals of the species from outside the system under analysis and introduce them to it, therefore making it possible for the population to surpass in number the carrying capacity of the system, but only to return naturally to the maximum value after a finite number ``time-steps''. To examine this in detail consider the system defined by the recurrence relation

\begin{equation}\label{eq:harvest1}
	\Delta y\equiv y_{n+1}-y_n=r(1-y_n)y_n.
\end{equation}

Here the population growth in any given period $\Delta y$ is proportional to both the initial population $y_n$ and the difference between this and the maximum population, normalized to the value of 1. The proportion constant is the growth rate $r$. After some simplification, we can rewrite system as

\begin{equation}\label{eq:harvestmod}
	y_{n+1}=(1+r)y_n-ry_n^2.
\end{equation}

We have yet to add the ``harvesting term(s)'', which we might do so in several ways.

% \subsection{Fixed harvesting}

If we consider a fixed rate, say each period we harvest a proportion $b$  of the population (in terms of the maximum) then

\begin{equation}
	f(y)=(1+r)y-ry^2-b%=y-ry^2+ry-b=y-P_f(y)
\end{equation}

is the corresponding map of this discrete dynamical system. From here we see that the Fixed Points Polynomial (FPP) of $f$ is\footnote{Remember $f_n(y)=y-(-1)^nP_f(y)$ for the $n$-th degree polynomial.}

\[
P_f(y)=ry^2-ry+b,
\]

whose FPs we can determine to be

\[
y_\pm=\frac{1\pm\sqrt{1-4b/r}}{2},
\]

from which we immediately calculate the Linear Factors Form (LFF) to be

\begin{equation}
	f(y)=y-ry(y-y_+)(y-y_-).
\end{equation}

From this form it is also straightforward to determine that in the corresponding canonical form we have

\[
x_1(b)=r(y_+-y_-)=\sqrt{r(r-4b)}.
\]

Both systems are related then by the linear transformations $y_n=y_-+x_n/r$ and $x_n=r(y_n-y_-)$. With this choice, the zero fixed point in the canonical map corresponds to $y_-$ and the nonzero fixed point to $y_+$. To analyze the stability of this system, consider $r$ to be fixed and given and $b$ to be the parameter of this family of systems. The one immediate conclusion is that, for $x_1\in\mathbb{R}$, we necessarily have that $x_1\geq0$. Now, real $x_1$ implies $b\leq r/4$. Over the $r/4$ value $x_1$ becomes complex which does not give any fixed points (but would mean ``over-harvesting''). Since ``harvesting'' cannot be negative, it is clear $b=0$ corresponds to the maximum value of $x_1=r$ and that $x_1=0$ when $b=r/4$. Remember now that to analyze the stability of $x_1$ we consider its PDF, $D_1=-x_1$.　Analogously, $D_0=x_1$. Since the maximum value of $x_1$ is $r$ and, being this the ``unconstrained growth rate'', $0<r<1$, we have that $-r<D_1<0$, therefore putting the $x_1$ in the stability range between $b_0=0$ and $-b_1=-2$ determined by the first stability band. This range is never left in any situation with physical meaning so we conclude that the nonzero fixed point, i.e. $y_+$ in the original system, is always the only asymptotically stable fixed point and the zero fixed point, i.e. $y_-$ in the original system, is always unstable. The only case to analyze with care is $b=0$ since then the two fixed points collide, but proposition \ref{prop:quadsemi} (on page \pageref{prop:quadsemi}) guarantees that in this case $x_1=0$ is semistable from the right.

In conclusion:
\begin{enumerate}
	\item When $b=r/4$ the population faces extinction asymptotically. Over this value extinction is achieved in a finite number of steps, there not being any more fixed points.
	\item When $0<b<r/4$, $x_n\rightarrow r$ and the population tends to $y_+=0.5(1+\sqrt{1-4b/r})$.
	\item When $b=0$, i.e. no harvesting, $x_n$ still tends to $x_1=0$ from the right and, correspondingly, the population tends asymptotically to $y_+=1$.
\end{enumerate}

\section{Remarks on the application of the method}

As somewhat instructive as the above examples may have been, the true simplifying power of the Canonical Map Theory is not yet fully met. Therefore we encourage the reader to apply this method to test the dynamics of any discrete dynamical systems family with a polynomial map depending on a single parameter, and evaluate the usefulness of the method by yourself. Families of maps with particular parametric dependencies and of cubic order will give the most useful applications for the time being, since the theory of higher degree polynomials is so far restricted to analyzing the stability of fixed points only.

\FloatBarrier

\chapter{Conclusions and future work}
\label{cha:Conclusions}

Finally, lets sum up the main findings of this work and give some directions on the future work that arises from it. Basically, we can summarize the findings of this work as having successfully given conditions for the stability of the fixed points of any real polynomial map with real fixed points and that depends on a single parameter. In order to do this we have defined ``canonical polynomial maps'' which are topologically conjugate to any polynomial map of the same degree with real fixed points. Then, the stability of the fixed points of the canonical polynomial maps has been found to depend solely on a special function called ``product distance'' of a given fixed point. The values of this product distance determine the stability of the fixed point and when it bifurcates to give rise to attracting periodic orbits of period $2^n$ for all $n$, and even ultimately when chaos arises through the period doubling cascade, as it passes through different ``stability bands'', although the exact values and widths of these stability bands are yet to be calculated for regions of type greater than one for higher order polynomials. The latter must be done numerically.

As specific goals achieved we can remark that in this work we have

\begin{itemize}
	\item Generalized the existing results on parametric dependency of the coefficients of real quadratic maps for a larger set of functions.
	\item Studied in detail the dynamics of general real cubic maps when the parametric dependency of its coefficients is given by continuous functions.
	\item We have obtained transformations that take general quadratic and cubic maps into one whose dynamics is known and easy to study, and that are also topologically conjugate maps, so that the properties of stability and chaos are preserved.
	\item Explicitly given the set of fixed points for the the proposed 'canonical' quadratic and cubic maps.
	\item Created a method that allows to understand the dynamics of cubic maps, that also includes quadratic maps.
	\item Generalized the latter method to $n$-th degree real polynomial maps.
	\item Proposed a method to generate a large class of regular-reversal cubic maps.
	\item Reproduced the know stability characteristics of the logistic map through the proposed method in a simpler way.
	\item Proposed a methodology that allows to be used to create discrete dynamical systems with some prescribed bifurcation diagram.
\end{itemize}

As future work we can point out the following:

\begin{itemize}
	\item The numerical values of the limits of the stability bands for still higher degree canonical polynomial maps must be calculated with numerical methods.
	\item Ultimately it is desired to obtain extensive tables of the bifurcation values for higher order polynomials.
	\item Algebraic multiplicity of the roots of the Fixed Points Polynomial, when nonhyperbolic fixed points take place, must be analyzed for the general n-th degree Canonical Polynomial Map.
	\item The case of complex fixed points must be included in detail for completeness.
	\item Once the preceding is accomplished, the possibility of extending the theory to apply it to the Taylor Polynomial of any real analytical iteration function can be analyzed.
	\item The power and simplicity of the proposed methodology will best be appreciated with 3rd or higher degree polynomials and when the implications on the Taylor polynomial of any non-linear map are understood.
\end{itemize}

\backmatter

%\nocite{*}
\bibliographystyle{plainnat}
\bibliography{thesis_all}
\appendix
\noappendicestocpagenum
\addappheadtotoc

\chapter{Computational and numerical tools}
\label{app:numerical}

For revision of the algebraic calculations, scripts in the wxMaxima scripting language were used, running under Ubuntu GNU/Linux 11.10. Both wxMaxima and Ubuntu are \emph{free software}\footnote{``Free'' as in ``free speech''.}. wxMaxima is a full-featured Computer Algebra System (CAS) in which some of the plots for this work were also made. Also, for most of the plots and bifurcation diagrams, a Python script shown below was used to calculate and plot most of the bifurcation diagrams presented in this work. The Python script was also ran under Ubuntu GNU/Linux and it uses \texttt{numpy} and \texttt{matplotlib} as libraries for numerical calculations and plotting, respectively.

\begin{verbatim}
import numpy as np
import matplotlib.pyplot as plt
import time

###################### DEFINITION OF FUNCTIONS #################################
def gn(n, xi, s=1, x0=0.1, niter=512, nplot=256): #iterates the canonical cubic map
#niter:     total number of iterations
#nplot:     number of points to return (for graphing)
#x0:        initial condition
#xi:        array containing the 3 fixed points
#n:         order of the canonical map and number of f.p.'s
#s:         sgn(M) from the original polynomial map

x = np.zeros(nplot) #here we store the orbit to be returned for graphing
xtemp = x0 #initialize orbit with initial condition
ssn=np.power(-1,n-1)*np.power(s,n) #the sign parameter

#part of the orbit that will not be graphed:
for i in range(niter-nplot):
prod=1
for j in range(n-1):
prod*=xtemp-xi[j+1]
xtemp *= (1 + ssn*prod)

x[0]=xtemp

#part of the orbit that will be graphed:
for i in range(nplot-1):
prod=1
for j in range(n-1):
prod*=x[i]-xi[j+1]
x[i+1] = x[i]*(1 + ssn*prod)

return x

def pdfn(n,xi): #Product Distance Function Values Array
#xi: vector containing the n fixed points
#k: number of fixed point whose stability we want to calculate
#n: total number of fixed points
pdf=np.zeros(n)     #array to store product distance values for each f.p.

for k in range(n):
prod=1 #initalize the product
for j in range(n): #perform product
if(j != k):
prod *= xi[j] - xi[k]
pdf[k]=prod

return pdf

def xl(i, p, lamb): #define the fixed points as a function of the parameter
#i: number of fixed point to return value
#p: NP 2-d array containing the extra parameters for the fuctional forms
#   of the fixed points. 1st index is the number of fixed point, 2nd is
#   the number of relevant parameter for that fixed point
if(i>0):
#return lamb*(3-lamb) #quadratic FP for QCM
#return p[i][1]*np.sqrt(lamb) + p[i][2] #square root fixed points
return p[i][0]*lamb + p[i][1] #linear fixed points
#return (-1)**p[i,0]*p[i,1]+(-1)**(p[i,0]+1)*np.exp(-p[i,2]*lamb)
else:
return 0 #the zeroth fixed point is always zero in value

####################### DEFINITION OF PARAMETERS ###############################
print time.clock(), "Defining parameters..."

n=4         #number of fixed points and order of the canonical map
npl=256    #number of points to graph for each value of lambda
nit=512 + npl     #total number of iterations
lmin=0     #starting value of lambda for the graph
lmax=1.2      #ending value of lambda for the graph
p=np.zeros([n,3])   #parameters for the definition of the fixed points
p[1,0]=1
p[1,1]=0
#p[1,2]=-1
p[2,0]=-1
p[2,1]=0
#p[2,2]=1
p[3,0]=2
p[3,1]=0
#p=np.array([[0,0,0],[1,0,0],[-1,0,0]]) #for the linear FPs
#p=np.array([[0,0,0],[1,0,0],[-1,0,0]]) #for the linear FPs
#p=np.array([[0,0,0],[0,2,0],[6,1,0]])   #one linear FP another const. FP
lval=np.linspace(lmin,lmax,600) #range of the parameter for the bif. diagram
midl=(lmin+lmax)/2  #middle point of the lambda interval (bif.val. estimation)
uncert=abs(lmax-lmin)/2 #uncertainty for the bif. value estimation
xi=np.zeros(n) #the zero fixed point is always zero in value
#x0 = 0.05 #initial condition
pdf=np.zeros([n,lval.size])
fps=np.zeros([n,lval.size])
lines=['-','--','-.']*3
linw=[2,2,2,3,3,3,4,4,4,5,5,5]

############# PRINT ESTIMATIONS TO SCREEN ######################################
print time.clock(), "Print parameter estimates..."

#we print the values found for the bifurcation
#along with its corresponding propagated uncertainties
print "l_i \in (",lmin,", ",lmax,")"
print "l_i =",midl,"+-",uncert
#print "b_i  \in (", -p[1][1]-np.exp(-lmin),",",-p[1][1]-np.exp(-lmax),")"
#print "b_i =",-p[1][1]-np.exp(-midl),"+-",midl*np.exp(-midl)*uncert

############# BEGIN PLOT #######################################################
print time.clock(), "Beginning Fixed Points plot..."

##draw the graph of the n fixed points varying with the parameter
plt.figure(1)
plt.subplot(313)
plt.grid(True)
plt.xlabel('Parameter')
plt.ylabel('FPs')
plt.xlim(lmin,lmax)
#plt.title('Fixed Points')
#plt.ylim(-2,2)
for k in range(n):
fps[k]=[xl(k,p,lam) for lam in lval]
plt.plot(lval,fps[k,:],'k',label='FP'+str(k), ls=lines[k],linewidth=linw[k])

plt.legend()

print time.clock(), "Beginning PDFs plot..."

#draw the product distance functions of each of the n FP's
plt.subplot(312)
plt.grid(True)
#plt.xlabel('Parameter')
#plt.ylabel('PDFs')6
#plt.ylabel('Stability Conditions')
plt.ylabel('PDFs')
#plt.title('PDFs of the Fixed Points')
plt.xlim(lmin,lmax)
plt.ylim(-4,1)
#print bifurcation values boundaries
plt.plot(lval,np.zeros_like(lval),'k--')
plt.plot(lval,-2*np.ones_like(lval),'k--')
if(n==2):
plt.plot(lval,-2.449*np.ones_like(lval),'k--')
plt.plot(lval,-2.544*np.ones_like(lval),'k--')
plt.plot(lval,-2.5642*np.ones_like(lval),'k--')
#plt.plot(lval,-2.56871*np.ones_like(lval),'k--')
#plt.plot(lval,-2.56966*np.ones_like(lval),'k--')
#plt.plot(lval,-2.569881*np.ones_like(lval),'k--')
plt.plot(lval,-2.57*np.ones_like(lval),'k--')

if(n==3):
plt.plot(lval,-3*np.ones_like(lval),'k--')
plt.plot(lval,-3.236*np.ones_like(lval),'k--')
plt.plot(lval,-3.288*np.ones_like(lval),'k--')
plt.plot(lval,-3.29925*np.ones_like(lval),'k--')

for j in range(lval.size):
pdf[:,j]=pdfn(n,fps[:,j])

for k in range(n):
plt.plot(lval,pdf[k,:],'k',label='PDF'+str(k),ls=lines[k],linewidth=linw[k])
#plt.plot(lval,pdf[k,:],'k',label='SC'+str(k),ls=lines[k],linewidth=linw[k])

plt.legend()

print time.clock(), "Begin bifurcation diagram plotting..."

#draw the bifurcation diagram
plt.subplot(311)
plt.grid(True)
#plt.xlabel('Parameter')
plt.ylabel('Asymptotic Value')
plt.title('Bifurcation Diagram')
plt.xlim(lmin,lmax)
#plt.ylim(-2, 2)
m=0

for lamb in lval: #draws the bifurcation diagram
#    for j in range(n-1): #calculate the fixed points
#        xi[j+1]= xl(j+1,p,lamb)
#
#    aux=pdfn(n,xi)  #calculate PDF of the FP's
#    for k in range(n):
#        pdf[k][m]=aux[k] #store the values for graphing
#plt.subplot(311)
x = gn(n, fps[:,m], x0=0.9, niter=nit, nplot=npl)
plt.plot(lamb*np.ones(npl),x,'k,')
x = gn(n, fps[:,m], x0=-0.01,niter=nit, nplot=npl)
plt.plot(lamb*np.ones(npl),x,'k,')
m+=1

print time.clock(), "Done."

plt.show()
\end{verbatim}

\listoftables
\listoffigures
\end{document}